\documentclass[a4paper,12pt,reqno]{amsart}
\usepackage{amsmath,amssymb,amsthm}
\usepackage{latexsym}
\usepackage{color}
\usepackage{graphicx}
\usepackage{mathrsfs}
\usepackage{enumerate}
\usepackage{enumitem}
\usepackage[abbrev]{amsrefs}
\usepackage[T1]{fontenc}
\usepackage{bbm}
\usepackage{mathtools}
\mathtoolsset{showonlyrefs=true}

\usepackage{tikz}
\usetikzlibrary{arrows.meta}
\usepackage{float}

\usepackage[colorlinks,
            linkcolor=blue,      
           anchorcolor=blue,  
          citecolor=red,       
          ]{hyperref}

\allowdisplaybreaks[4]

\theoremstyle{plain}
\newtheorem{thm}{Theorem}[section]
\newtheorem{lemm}[thm]{Lemma}
\newtheorem{prop}[thm]{Proposition}

\theoremstyle{definition}
\newtheorem{df}[thm]{Definition}
\newtheorem{rem}[thm]{Remark}

\makeatletter

\@addtoreset{equation}{section}
\makeatother

\renewcommand{\div}{\operatorname{div}}
\newcommand{\dB}{\dot{B}}

\newcommand{\supp}{\operatorname{supp}}

\renewcommand{\leq}{\leqslant}
\renewcommand{\geq}{\geqslant}

\newcommand{\n}[1]{{\left\|#1\right\|}}

\newcommand{\Id}{{\rm Id}}

\newcommand{\R}{\mathbb{R}}
\newcommand{\T}{\mathbb{T}}
\newcommand{\N}{\mathbb{N}}
\newcommand{\Z}{\mathbb{Z}}

\newcommand{\C}{\mathbb{C}}

\newcommand{\lp}[1]{\left[#1\right]}
\newcommand{\Mp}[1]{\left\{#1\right\}}
\renewcommand{\sp}[1]{\left(#1\right)}

\newcommand{\p}{{\rm p}}
\renewcommand{\r}{{\rm r}}

\newcommand{\bb}{\mathbbm}

\begin{document}
\title[Non-uniqueness for the Navier--Stokes equations]
{Sharp non-uniqueness for the Navier--Stokes equations in scaling critical spaces}
\author{Mikihiro Fujii}
\address{Graduate School of Science, Nagoya City University, Nagoya, 467-8501, Japan}
\email{fujii.mikihiro@nsc.nagoya-cu.ac.jp}
\keywords{Navier--Stokes equations, non-uniqueness, scaling critical Besov spaces}
\subjclass[2020]{35Q30, 35A02, 76D03}
\begin{abstract}
It is known that uniqueness of mild solutions to the incompressible Navier--Stokes equations holds in the critical class $C([0,T);L^n(\mathbb{R}^n))$ for $n \geqslant 3$. 
In this paper, we prove that this result is sharp in the sense that uniqueness fails if $L^n(\mathbb{R}^n)$ is replaced by some scaling critical spaces that are even slightly larger. 
We achieve this through a complete classification for every pair $(p,q)$ of whether uniqueness of mild solutions in the critical Besov class $C([0,T);\dot{B}_{p,q}^{n/p-1}(\mathbb{R}^n))$ holds or not. 
Our non-uniqueness mechanism produces infinitely many global solutions emanating even from zero initial state, whose large-time asymptotics are governed by non-trivial stationary flows.
To the best of our knowledge, such non-unique solutions provide the first examples of non-dissipative unforced Navier--Stokes flow with critical regularity.
\end{abstract}
\maketitle


\section{Introduction}\label{sec:intro}
Let us consider the initial value problem for the incompressible Navier--Stokes equations on $\R^n$ with $n \geq 2$:
\begin{align}\label{eq:NS-intro}
    \begin{cases}
        \partial_t u - \Delta u + \mathbb{P}\div (u \otimes u) = 0, \qquad & t>0, x \in \mathbb{R}^n, \\
        \div u = 0, \qquad & t\geq 0, x \in \mathbb{R}^n, \\
        u(0,x) = u_0(x), & x \in \mathbb{R}^n, \\
    \end{cases}
\end{align}
where $u=u(t,x):[0,\infty) \times \R^n \to \R^n$ is the unknown velocity field of the fluid, whereas $u_0=u_0(x):\R^n \to \R^n$ is a given initial datum satisfying $\div u_0=0$. 
We denote by $\mathbb{P}=\Id+\nabla\div(-\Delta)^{-1}$ the Helmholtz projection onto divergence-free vector fields.

In dimensions $n \geq 3$,
Lions--Masmoudi \cite{Lio-Mas} and Furioli--Rieusset--Terraneo \cite{Fur-Rie-Ter-2000} proved that mild solutions to \eqref{eq:NS-intro} are unique in the critical space $C([0,T);L^n(\R^n))$.
The aim of this paper is to prove the sharpness of the choice of $L^n(\R^n)$ in the sense that the uniqueness fails if one replaces $L^n(\R^n)$ by a scaling critical space that is even slightly larger;
for instance, enlarging $L^n(\R^n)=\dot{F}_{n,2}^0(\R^n)$ to a slightly larger Lizorkin--Triebel space $\dot{F}_{n,q}^0(\R^n)$ with any $q>2$ leads to non-uniqueness of the mild solutions in $C([0,T);\dot{F}_{n,q}^0(\R^n))$. 
To achieve this, 
we show that \emph{the necessary and sufficient condition} for the uniqueness of mild solutions to \eqref{eq:NS-intro} in the scaling critical Besov spaces
is given by $1 \leq p < n$ with $1 \leq q \leq \infty$, or $p=n$ with $1 \leq q \leq 2$; see Figure~\ref{fig:uniq-nonuniq} below.
Moreover, for the non-unique case $p=n$ with $2<q\leq \infty$, or $n < p \leq \infty$ with $1 \leq q \leq \infty$,
we prove that the zero initial datum $u_0=0$ generates infinitely many small global solutions $u \in C([0,\infty);\dB_{p,q}^{n/p-1}(\R^n))$, which asymptotically converge to a non-trivial flow $U \in \dB_{p,q}^{n/p-1}(\R^n)$ solving the unforced stationary Navier--Stokes equations:
\begin{align}\label{eq:sNS}
    \begin{cases}
        -\Delta U + \mathbb{P} \div (U \otimes U) = 0, \qquad & x \in \mathbb{R}^n,\\
        \div U = 0, \qquad & x \in \mathbb{R}^n.
    \end{cases}
\end{align}

\subsection{Known results related to our work}
Before we state our main results precisely, we recall the previous studies related to our work.
By the Duhamel principle, we may formally rewrite
\eqref{eq:NS-intro} as the integral equation
\begin{align}\label{eq:int}
    u(t)
    = 
    e^{t\Delta}u_0
    -
    \int_0^t 
    e^{(t-\tau)\Delta}
    \mathbb{P}\div(u(\tau)\otimes u(\tau))d\tau.
\end{align}
A solution to \eqref{eq:int} is called a mild solution of \eqref{eq:NS-intro}.
In view of the Fujita--Kato principle, 
it is natural to construct mild solutions in scaling critical spaces.
Here, let us recall the notion of scaling criticality. 
If $u$ is a mild solution, then  $u_\lambda(t,x)=\lambda u(\lambda^2 t,\lambda x)$
also solves \eqref{eq:int} for all $\lambda>0$. 
A Banach space $X$
is called a scaling critical space if $\n{u_{\lambda}(0,\cdot)}_X = \n{u(0,\cdot)}_X$ for all $\lambda>0$. 
Typical examples of scaling-critical spaces and their inclusion relations are as follows:
\begin{align}
    \dot{H}^{\frac{n}{2}-1}(\mathbb{R}^n)
    \hookrightarrow 
    L^n(\mathbb{R}^n)
    &
    \hookrightarrow 
    L^{n,\infty}(\mathbb{R}^n)
    \\
    &
    \hookrightarrow
    \dot{B}_{p,\infty}^{\frac{n}{p}-1}(\R^n)
    \hookrightarrow
    {\rm BMO}^{-1}(\R^n)
    \hookrightarrow
    \dB_{\infty,\infty}^{-1}(\R^n)
\end{align}
with $n<p<\infty$ and 
\begin{align}
    \dot{B}_{p,q}^{\frac{n}{p}-1}(\mathbb{R}^n)\hookrightarrow {\rm BMO}^{-1}(\mathbb{R}^n)
\end{align}
with $1\leq p<\infty$ and $1\leq q\leq\infty$.
This line of research originates from the pioneering work of Fujita--Kato \cite{Fuj-Kato-64-ARMA}, where they proved well-posedness
in $\dot{H}^{n/2-1}(\mathbb{R}^n)$, and was extended to $L^n(\mathbb{R}^n)$ by Kato \cite{Kato-84}
and Giga--Miyakawa \cite{Gig-Miy-85}.
Moreover, Kozono--Yamazaki \cite{Koz-Yam-94}, Cannone--Planchon \cite{Can-Pla-96}, and Planchon \cite{Pla-98}
proved well-posedness in
$\dot{B}_{p,q}^{n/p-1}(\mathbb{R}^n)$ for $1\leq p<\infty$ and $1\leq q\leq\infty$.
In the endpoint case $p=\infty$, Koch--Tataru \cite{Koh-Tat-01} established well-posedness for small data in
${\rm BMO}^{-1}(\mathbb{R}^n)$, however, it was shown by \cites{Bou-Pav-08,Yon-10,Wan-15} that \eqref{eq:NS-intro} is ill-posed in $\dot{B}_{\infty,q}^{-1}(\mathbb{R}^n)$ ($1 \leq q \leq \infty$).
See \cite{IN} for more precise analysis on the well-posedness of \eqref{eq:NS-intro} in critical Besov or Lizorkin--Triebel spaces.
We refer to \cites{Kozono-Shimizu-2018,Takeuchi-2025-SIMA,Yam-2000,arXiv:2509.21272v2} for the forced Navier--Stokes flow in critical spaces.

Next, we address the issue of uniqueness for the Navier--Stokes equations. This topic was brought to the forefront by Leray's pioneering work \cite{Leray-1934-Acta}, which produced weak solutions for any divergence-free initial datum in $L^2(\R^3)$ but did not reveal their uniqueness.
It is well-known that the Leray--Hopf weak solutions are unique if they belong to the Ladyzhenskaya--Prodi--Serrin class $L^p(0,T;L^q(\R^n))$ with $n/q+2/p=1$; see \cites{Serrin-1963,Masuda-1984,Escauriaza-Seregin-Sverak-2003,Kozono-Sohr-1996} for more detail.
For the mild solutions, Lions--Masmoudi \cite{Lio-Mas} and Furioli--Rieusset--Terraneo \cite{Fur-Rie-Ter-2000} proved uniqueness in the scaling critical class $C([0,T);L^n(\R^n))$ with $n \geq 3$, which corresponds to an endpoint $(p,q)=(\infty,n)$ of the Ladyzhenskaya--Prodi--Serrin class.
See also \cite{Yam-2000} for the uniqueness of small mild solutions in $L^{\infty}(0,T;L^{n,\infty}(\R^n))$ and \cite{Miu-2005} for the uniqueness in $C([0,T);bmo^{-1}(\R^n)) \cap L^{\infty}_{\rm loc}((0,T);L^{\infty}(\R^n))$.
The recent progress of this direction is given by Zhan \cite{arXiv:2402.01174v2}, where he considered the uniqueness of mild solutions in $C([0,T);\widetilde{L}^{n,\infty}(\R^n))$ with a new space $\widetilde{L}^{n,\infty}(\R^n)=\overline{L^{n,\infty}(\R^n) \cap L^{\infty}(\R^n)}^{L^{n,\infty}}$ which is strictly larger than the completion of $C_c^{\infty}(\R^n)$ in $L^{n,\infty}(\R^n)$.
In terms of the scaling critical Besov spaces, the following proposition holds.
\begin{prop}\label{prop:uniq}
    Let $n \geq 3$.
    Let $p$ and $q$ satisfy either of the following.
    \begin{itemize}
        \item [\rm (U1)] $1 \leq p < n$ and $1 \leq q \leq \infty$,
        \item [\rm (U2)] $p=n$ and $1 \leq q \leq 2$.
    \end{itemize}
    Then, the uniqueness of mild solutions to \eqref{eq:NS-intro} holds in $C([0,T);\dB_{p,q}^{n/p-1}(\R^n))$, provided a smallness condition on solutions only when $q=\infty$.
\end{prop}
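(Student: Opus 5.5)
The plan is to reduce every case to the Furioli--Rieusset--Terraneo / Lions--Masmoudi theorem in $C([0,T);L^n)$, or to a direct difference estimate in a space where the bilinear Duhamel term is bounded. Let $u,v$ be two mild solutions with the same datum and set $w=u-v$. Then
\begin{align}
    w(t)=-\int_0^t e^{(t-\tau)\Delta}\mathbb{P}\div\bigl(w\otimes u+v\otimes w\bigr)(\tau)\,d\tau .
\end{align}

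\emph{Case (U2) and case (U1) with $q<\infty$.} For (U2), the Besov embedding $\dB^0_{n,q}\hookrightarrow \dB^0_{n,2}\hookrightarrow \dot F^0_{n,2}=L^n$ (valid since $q\le 2\le n$) gives $C([0,T);\dB_{n,q}^{0})\subset C([0,T);L^n)$, and the known uniqueness applies directly. For (U1) with $p<n$, $q<\infty$, I use the embedding $\dB^{n/p-1}_{p,q}\hookrightarrow \dB^{0}_{n,q}$. This lands in $L^n$ only when $q\le n$, so it does not settle every $q<\infty$. Instead I split each solution as $u=e^{t\Delta}u_0+\tilde u$ and show that the Duhamel part $\tilde u$ is more regular. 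By the product law for $p<n$ (sum of regularities $2(n/p-1)>0$ and $2/p-1/n$ conditions), the bilinear term maps $C\dB^{n/p-1}_{p,q}$ into $C\dB^{n/p-1}_{p,q/2}$ or $C\dB^{n/p-1}_{p,1}$-type spaces with improved summability. Iterating finitely many times, each solution minus its linear part lies in $C([0,T);\dB^{n/p-1}_{p,r})$ with $r\le 2$, hence in $C([0,T);L^n)$. The linear part $e^{t\Delta}u_0$ is common to $u$ and $v$.

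The cleaner alternative is to run the Furioli--Rieusset--Terraneo argument directly. Estimate $w$ in the weaker space $L^\infty(0,T';\dB^{n/p-2+\varepsilon}_{p,\infty})$ (or $\dB^{-1+n/r}_{r,\infty}$ with suitable $r$), where the paraproduct estimate $\|fg\|\lesssim\|f\|_{\dB^{-1+n/p}_{p,q}}\|g\|_{\dB^{\cdot}_{\cdot,\infty}}$ holds precisely because $p<n$ makes the sum of regularities positive. Time continuity then lets me split $u=e^{t\Delta}u_0$-like smooth part plus a part small in norm. The smooth part contributes a factor $T'^{\delta}$, the small part a small constant, which absorbs and gives $w=0$ on $[0,T']$; continuation yields uniqueness on $[0,T)$.

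\emph{Case $q=\infty$.} Continuity in $\dB^{\cdot}_{p,\infty}$ does not give smallness of high frequencies, so the splitting fails. This is why the statement assumes smallness. Under the smallness hypothesis, the same difference estimate in the auxiliary space closes immediately with constant $C\varepsilon<1$.

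The main obstacle is the bilinear estimate for $w\otimes u$ in the auxiliary space of negative regularity at the endpoint where regularities barely sum positive. I will handle it by the Bony decomposition, treating the remainder term via $p<n$ (or $q\le2$ when $p=n$).
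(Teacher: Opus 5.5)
Your treatment of (U2) and your ``cleaner alternative'' for (U1) are what the paper intends; it gives no details and only refers to the arguments of Furioli--Lemari\'e-Rieusset--Terraneo, Fujii, and Iwabuchi--Okazaki. For (U2) this is the embedding $\dB^0_{n,q}\hookrightarrow\dB^0_{n,2}\hookrightarrow\dot F^0_{n,2}=L^n$ followed by the Furioli--Lemari\'e-Rieusset--Terraneo / Lions--Masmoudi theorem. For (U1) it is a difference estimate in an $\ell^\infty$-type critical Besov space. You close it by splitting $u$ into a subcritical piece, which yields a positive power of $T'$, and a piece made uniformly small by time continuity when $q<\infty$, or by the smallness hypothesis when $q=\infty$.

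Your first route for (U1), however, fails and should be dropped, for two reasons.
\begin{itemize}
\item Even if you knew $u-e^{t\Delta}u_0\in C([0,T);L^n)$, the solution $u$ itself is not in $C([0,T);L^n)$ when $u_0\notin L^n$. This happens for suitable data as soon as $q>n$, since Jawerth's embedding $\dB^{n/p-1}_{p,q}\hookrightarrow L^n$ ($p<n$) holds exactly for $q\leq n$. So the $L^n$ theorem cannot be quoted. Write $B(a,b)=\int_0^t e^{(t-\tau)\Delta}\mathbb{P}\div(a\otimes b)\,d\tau$. The difference solves $w=-B(w,u)-B(v,w)$, with the full $u,v$, including $e^{t\Delta}u_0$, as coefficients. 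The common linear part cancels from $w$, but not from the equation for $w$.
\item The summability bootstrap stalls. The term $B(e^{t\Delta}u_0,e^{t\Delta}u_0)$ reappears at every iteration, and the product estimates give it no better than $\ell^{q/2}$ summability. For large $q$ you never reach a summability index that embeds into $L^n$.
\end{itemize}

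On the route you keep, the first auxiliary space you name, $\dB^{n/p-2+\varepsilon}_{p,\infty}$, does not cover the whole range. Pairing it with $\dB^{n/p-1}_{p,q}$ gives the regularity sum $2n/p-3+\varepsilon$, which is negative for $2n/(3-\varepsilon)<p<n$. Use instead the critical space $\dB^{n/p-1}_{p,\infty}$ itself, or $\dB^{n/r-1}_{r,\infty}$ with $r\geq p$ and $1/p+1/r>2/n$. It contains $w$ automatically, and
\[
\|B(w,u)\|_{L^\infty(0,T';\dB^{n/p-1}_{p,\infty})}\leq C\|w\|_{L^\infty(0,T';\dB^{n/p-1}_{p,\infty})}\|u\|_{L^\infty(0,T';\dB^{n/p-1}_{p,\infty})}
\]
holds precisely because $2(n/p-1)>0$.

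Then split $u=\sum_{j<N}\Delta_ju+\sum_{j\geq N}\Delta_ju$, choosing $N$ by compactness of $u([0,T'])$ in $\dB^{n/p-1}_{p,q}$ so that the high part is uniformly small. This is where $q<\infty$ enters. The low part has $\dB^{n/p-1+\delta}_{p,\infty}$-norm at most $2^{N\delta}\sup_t\|u(t)\|_{\dB^{n/p-1}_{p,\infty}}$ and contributes the factor $(T')^{\delta/2}$. Do the same for $v$.
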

The proof of Proposition \ref{prop:uniq} follows immediately by adopting the argument in \cites{Fur-Rie-Ter-2000,Fuj-AIPHC,Iwabuchi-Okazaki-2025} in the case of (U1) and by the result of \cite{Fur-Rie-Ter-2000} with the continuous embedding $\dB_{n,q}^0(\R^n) \hookrightarrow L^n(\R^n)$ holds for the case of (U2).
Note that Proposition \ref{prop:uniq} still holds if one replaces Besov spaces by Lizorkin--Triebel spaces as the continuous embedding $\dot{F}_{p,q}^{n/p-1}(\R^n) \hookrightarrow L^n(\R^n)$ for all $(p,q)$ satisfying (U1) or (U2).

Finally, we briefly review works on non-uniqueness. One major direction originates from applying the Nash convex integration
to the Euler equations on the torus (see for instance \cites{DeLellis-Szekelyhidi-09,Isett-18,Daneri-Szekelyhidi-2017-ARMA,Lellis-Szekelyhidi-2013-Invent,Buckmaster-Lellis-Isett-Szekelyhidi-2015-AnnMath}). Using an iterative scheme based on the Euler--Reynolds system via the geometric lemma,
these works constructed Euler flows violating the energy conservation law and made substantial progress toward the Onsager conjecture.
Although incorporating the viscous term was long thought to be fundamentally difficult, Buckmaster--Vicol \cite{Buc-Vic-19}
introduced the concept of intermittent convex integration and proved that uniqueness of weak solutions to the
Navier--Stokes equations on $\mathbb{T}^3$ breaks down in the supercritical class
$C([0,T);H^\beta(\mathbb{T}^3))$ for some $0<\beta\ll 1/2$.
Cheskidov--Luo \cite{Che-Luo-21} showed that an endpoint Ladyzhenskaya--Prodi--Serrin class
$L^2(0,T;L^\infty(\mathbb{T}^n))$
is optimal with respect to the time integrability exponent; they constructed non-unique solutions in
$L^p(0,T;L^\infty(\mathbb{T}^3))$ for $p<2$.
In \cite{Che-Luo-22-AnnPDE}, they also considered another endpoint in two dimensions and proved non-uniqueness in
$C([0,T);L^p(\mathbb{T}^2))$ for $p<2$.
Furthermore, Cheskidov--Zeng--Zhang \cite{arXiv:2503.05692} claimed that every initial datum in
$H^{1/2}(\mathbb{T}^3)$ generates infinitely many Navier--Stokes flows whose kinetic energy is continuous
and monotonically decreasing in time. 
Remarkably, Coiculescu--Palasek \cite{Coi-Pal-25} showed that even in
${\rm BMO}^{-1}(\mathbb{T}^3)$, where well-posedness for small data is known, a certain large initial datum generates two smooth solutions.
Their argument builds on ideas developed by Palasek \cite{Pal-25-IMRN} for the dyadic Navier--Stokes model.
Pushing this direction further, Cheskidov--Dai--Palasek \cite{arXiv:2511.09556} observed that non-uniqueness, in the form of
instantaneous Type I blow-up, may be generated from arbitrary $C^{\infty}$ solenoidal initial data.
Another strategy to prove non-uniqueness is developed by Jia--Sver\'ak \cites{Jia-Sve-Invent,Jia-Sve-JFA}, and is to exploit the instability of certain self-similar solutions.
A notable result in this direction is due to Albritton--Bru\'e--Colombo \cite{Alb-Bru-Col-AnnMath}, who proved non-uniqueness of Leray--Hopf weak solutions
to the forced Navier--Stokes equations.
See Aoki--Maekawa \cite{Aok-Mae} for the corresponding result on a two-dimensional half space.
More recently, Hou--Wang--Yang \cite{2509.25116v1} announced an analogous statement in three dimensions with the unforced case $f\equiv 0$ by a computer-assisted proof.
\subsection{Main results}
The purpose of this paper is to show non-uniqueness of mild solutions to \eqref{eq:NS-intro} in $C([0,T);\dB_{p,q}^{n/p-1}(\R^n))$ for all $(p,q)$ satisfying neither (U1) nor (U2) in Proposition \ref{prop:uniq}.
Now, the precise statement of our main theorem reads as follows.
\begin{thm}\label{thm:main}
    Let $n \geq 3$.
    Let $p$ and $q$ satisfy either of the following.
    \begin{itemize}
        \item [\rm (N1)] $p=n$ and $2<q \leq \infty$,
        \item [\rm (N2)] $n < p \leq \infty$ and $1 \leq q \leq \infty$.
    \end{itemize}
    Then, there exist a monotone decreasing sequence $\{\varepsilon_m\}_{m \in \N} \subset (0,\infty)$ with $\varepsilon_m \to 0$ as $m \to \infty$ and a constant\footnote{We choose $\eta>0$ for $p<2n$ and $q<\infty$, while $\eta=0$ for the other cases.} $\eta=\eta(n,p,q)\geq 0$ such that for every $m \in \N$ and any initial datum $u_0 \in \dB_{p,q}^{n/p-1}(\R^n)$ with 
    \begin{align}
        \n{u_0}_{\dB_{p,q}^{n/p-1}} \leq \eta, \qquad \div u_0=0,
    \end{align}
    there exist a non-stationary global mild solution
    $u_m \in C([0,\infty);\dot{B}_{p,q}^{n/p-1}(\mathbb{R}^n))$
    to \eqref{eq:NS-intro} and a stationary solution $U_m \in \dB_{p,q}^{n/p-1}(\mathbb{R}^n)$ to \eqref{eq:sNS} satisfying
    \begin{align}\label{u_m}
        \lim_{t \to \infty} \n{ u_m(t) - U_m}_{\dB_{p,q}^{\frac{n}{p}-1}} = 0,
        \qquad
        C^{-1}\varepsilon_m \leq \n{U_m}_{\dB_{p,q}^{\frac{n}{p}-1}}\leq C\varepsilon_m
    \end{align}
    with some constant $C=C(n,p,q)>0$ independent of $u_0$ and $m$.
\end{thm}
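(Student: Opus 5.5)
The construction has two stages. First I build a family of small nontrivial stationary solutions $U_m$ of \eqref{eq:sNS} in $\dB_{p,q}^{n/p-1}(\R^n)$. Then I construct a global mild solution $u_m$ with datum $u_0$ (in particular $u_0=0$) that converges to $U_m$ as $t\to\infty$. Uniqueness then fails, since for $u_0=0$ the trivial solution $u\equiv 0$ and each $u_m$ are distinct mild solutions.

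\emph{Stationary solutions.} I would solve $U = \varepsilon_m V_m + (-\Delta)^{-1}\mathbb{P}\div(\ldots)$ in a forced-stationary sense. Since \eqref{eq:sNS} is unforced, a cleaner route is to find $U$ as a fixed point of
\begin{align}
U = W - (-\Delta)^{-1}\mathbb{P}\div(U\otimes U),
\end{align}
where $W$ is a divergence-free harmonic-like profile that lies in $\dB_{p,q}^{n/p-1}$ but not in $L^n$. A natural choice is a homogeneous field of degree $-1$ built from lacunary frequency pieces, e.g.\ $W=\varepsilon\sum_j 2^{j}\phi(2^j x)$-type sums. The requirement $-\Delta W=0$ is impossible for nonzero $W$ in these spaces, however. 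I would therefore instead prescribe the nonlinearity: choose $U=\varepsilon \sum_{j} a_j 2^{j}\Phi(2^j\cdot)$ with disjoint dyadic annuli chosen so that $\mathbb{P}\div(U\otimes U)$ has frequency interactions handled by a corrector.

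Concretely, I would take $U = \varepsilon U^{(1)} + \varepsilon^2 U^{(2)}+\cdots$. Here $U^{(1)}$ is a sum of shear-type or Beltrami-type pieces that are exact steady Euler solutions, so that $\mathbb{P}\div(U^{(1)}\otimes U^{(1)})$ cancels up to cross-scale terms. Then $-\Delta U^{(1)}$ must be balanced, which is achieved by letting the quadratic interaction of high modes produce low modes with amplitude $\varepsilon^2 2^{2j}$. The membership of $U$ in $\dB_{n,q}^0$ with $q>2$ (but not $q\le2$) is exactly where the $\ell^q$-summation over infinitely many scales with equal amplitudes enters. Case (N1) will be the borderline case, and (N2) is easier since $p>n$ gives extra room.

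\emph{Time-dependent solutions.} Write $u_m=U_m+w$. The perturbation $w$ solves the perturbed Navier--Stokes system
\begin{align}
\partial_t w-\Delta w+\mathbb{P}\div(U_m\otimes w+w\otimes U_m+w\otimes w)=0, \qquad w(0)=u_0-U_m.
\end{align}
Since $U_m$ is small in the critical norm, I would solve this globally by a Kato-type fixed point in $\widetilde{L}^\infty(\dB_{p,q}^{n/p-1})\cap \widetilde{L}^1(\dB_{p,q}^{n/p+1})$ (Chemin--Lerner). For $p<2n$ and $q<\infty$ this allows small $u_0$ ($\eta>0$); otherwise I would restrict to $u_0=0$ using an auxiliary space. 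I would then prove $w(t)\to0$ in the critical norm as $t\to\infty$ by the standard density argument: approximate the datum by smooth data, use the decay of the heat semigroup, and use the smallness of the linear operator. Finally, $u_m\ne U_m$ because $u_m(0)=u_0\neq U_m$, and the estimate $\n{U_m}\sim\varepsilon_m$ with $\varepsilon_m\downarrow0$ makes the $u_m$ pairwise distinct.

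\emph{Main obstacle.} The hard step is the first one: producing a nontrivial small stationary solution in $\dB_{n,q}^0$ for $q>2$. No such solution can exist in $L^n$ by Proposition \ref{prop:uniq}, so the construction must genuinely use the $\ell^q$, $q>2$, summability across infinitely many scales. I would try first an infinite-scale iterative cascade, in which each scale's self-interaction forces the next-lower scale. This requires controlling all the error terms uniformly in a space strictly larger than $L^n$, and doing so when the product estimates for critical Besov functions themselves fail at $p=n$, $q>2$.
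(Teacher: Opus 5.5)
Your two-stage architecture matches the paper's, but Stage 1, which you leave as an acknowledged obstacle, is a genuine gap, and the concrete scheme you sketch for it cannot work as written. A formal expansion $U=\varepsilon U^{(1)}+\varepsilon^2U^{(2)}+\cdots$ of an \emph{unforced} stationary solution gives $-\Delta U^{(1)}=0$ at order $\varepsilon$, hence $U^{(1)}=0$, and inductively $U=0$. Choosing the pieces to be exact steady Euler flows, so that their self-interaction cancels, is the opposite of what is required.

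Your closing idea, that each scale's self-interaction forces the next-lower scale, is exactly the paper's mechanism. However, the balance is between scales rather than between powers of a small parameter, and realizing it needs three ingredients you do not supply. First, for solenoidal $V_{j-1}$ one has $-\Delta V_{j-1}=\div\mathcal{D}V_{j-1}$ with $\mathcal{D}V_{j-1}$ symmetric, so the viscous term of the lower block is the divergence of a stress. Second, the Nash decomposition (Lemma~\ref{lemm:Nash}) applied to $\Id-\lambda_{j-1}^{-8}\mathcal{D}V_{j-1}$ fixes the amplitudes of $V_j\approx\sqrt{2}\lambda_{j-1}^4\sum_k\sum_{\ell\in\Lambda_j}(h_j\ell)^{-1/2}\Gamma_{j,k}[V_{j-1}]\psi_j k\cos(\lambda^\ell a_k\cdot x)$. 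With this choice the non-oscillating part of $V_j\otimes V_j$ equals $\lambda_{j-1}^8\psi_j^2\Id-\mathcal{D}V_{j-1}$, and its $\mathbb{P}\div$ cancels $-\Delta V_{j-1}$ exactly.

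Third, and this is where $q>2$ really enters: that non-oscillating part is governed by the $\ell^2$-norm of the amplitudes $((h_j\ell)^{-1/2})_{\ell\in\Lambda_j}$, which is $1$. By contrast, $\n{V_j}_{\dB^0_{n,q}}$ is governed by their $\ell^q$-norm, which is $O(h_j^{-1/2})$ if $q>2$ but at least $1$ if $q\leq 2$; for $p>n$ the weight $\lambda^{(n/p-1)\ell}$ gives smallness for every $q$. Thus a block carries a stress of size $\lambda_{j-1}^8$ with a tiny critical norm. This requires $\Lambda_j$ to start at frequency $\lambda_{j-1}^{\mu}$, so that all oscillating errors sit at high frequency, and to be long enough that $h_j\sim\lambda_{j-1}^{4\mu}$, hence the tetration-type $\lambda_j$. ``Equal amplitudes over infinitely many scales'' does not achieve this.

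Finally, the paper never needs a product estimate at $(n,q)$ with $q>2$, which is the obstruction you point to. The main parts of the residual $F=-\Delta V+\mathbb{P}\div(V\otimes V)$ consist only of high-frequency interactions. These are estimated directly via Proposition~\ref{prop:F-Besov} in the well-posed class $\dB^{n/r-3}_{r,1}$, $n/2<r<n$, giving $\n{F}_{\dB^{n/r-3}_{r,1}}\lesssim\lambda^{-\beta}\ll\lambda^{-\alpha(p)}\sim\n{V}_{\dB^{n/p-1}_{p,q}}$. The corrector $W$ is then obtained by contraction in $\dB^{n/r-1}_{r,1}$ via Lemma~\ref{lemm:nonlin-sta}, using that $V$ also belongs to $\dB^{n/p_*-1}_{p_*,1}$ for some $n<p_*<2n$.

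Stage 2 has the right structure: $u_m=U_m+w$, smallness of $U_m$, and the restriction to $u_0=0$ outside $p<2n$, $q<\infty$ via a smaller space. However, the space $\widetilde{L^\infty}(0,\infty;\dB^{n/p-1}_{p,q})\cap\widetilde{L^1}(0,\infty;\dB^{n/p+1}_{p,q})$ does not close for the linear terms $U_m\otimes w+w\otimes U_m$. Since $U_m$ is time-independent, all time integrability must come from $w$. Regularity $n/p+1>n/p$ does not control the low frequencies of $w$ in $L^1_tL^\infty_x$. For example, take $w=e^{t\Delta}w_0$ with $\widehat{w_0}$ supported in $|\xi|\sim2^{-N}$ and $U_m$ at unit frequency; then the term $(w\cdot\nabla)U_m$ makes the bilinear bound lose a factor $2^N$. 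The paper instead works in $\widetilde{L^\theta}(0,\infty;\dB^{n/p-1+2/\theta}_{p,q})$ with $2<\theta<\infty$ and $n/p-1+1/\theta>0$ (Lemma~\ref{lemm:nonlin-Duha}), which keeps the regularity index below $n/p$. The convergence $w(t)\to0$ then follows from the vanishing of the $\widetilde{L^\theta}(T,\infty)$-tail as $T\to\infty$, with no density argument needed.
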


\begin{rem}\label{rem:main}
Let us mention some remarks on our result.
\begin{enumerate}[label=\arabic*.]
    \item 
      Chemin \cite{Che-92} and Planchon \cite{Pla-98} established well-posedness for \eqref{eq:NS-intro} in critical Besov spaces
      for all $1\leq p<\infty$ and $1\leq q\leq \infty$.
      This does not contradict our result, since their solution space is 
      \begin{align}\label{Che-class}
      C([0,T);\dot{B}_{p,q}^{\frac{n}{p}-1}(\mathbb{R}^n))
      \cap 
      \widetilde{L^1}(0,T;\dot{B}_{p,q}^{\frac{n}{p}+1}(\mathbb{R}^n)),
      \end{align}
      whereas our non-unique solutions do not belong to the auxiliary dissipative space
      $\widetilde{L^1}(0,T;\dot{B}_{p,q}^{n/p+1}(\mathbb{R}^n))$.\footnote{See Section \ref{sec:pre} for the definition of Chemin--Lerner spaces $\widetilde{L^{\theta}}(0,T;\dot{B}_{p,q}^s(\R^n))$.}
    \item 
      The large-time behavior of global solutions to \eqref{eq:NS-intro} with small initial data in critical Besov spaces has been studied extensively; see \cites{Pla-98,Nakasato-25-JDE,Gallagher-Iftimie-Planchon-2002,Gallagher-Iftimie-Planchon-2003,Takeuchi-2025,Watanabe-2025,Benameu-2015-JMAA} for instance.
      However, all of these results exploit the viscous dissipation and address only solutions that decay and converge asymptotically to zero.
      In contrast, the non-decaying global solutions $u_{\varepsilon}$ constructed in this paper are the first examples\footnote{In the framework of weak solutions, the existence of non-dissipative solutions is widely known; see \cite{Buc-Vic-19} for instance.} of non-dissipative unforced Navier--Stokes flows in critical spaces in which the Stokes semigroup decays at temporal infinity.
    \item 
      For any $(p,q)$ satisfying (N1) or (N2), 
      the continuous embeddings
      \begin{align}
          \dot{B}_{n,2+\delta}^0(\mathbb{R}^n)\hookrightarrow \dot{F}_{n,2+\delta}^0(\mathbb{R}^n)\hookrightarrow \dot{F}_{p,q}^{\frac{n}{p}-1}(\mathbb{R}^n)
      \end{align}
      hold with some $\delta=\delta(p,q) \in (0,1)$.
      Thus, non-uniqueness for $u_0=0$ remains valid if one replaces the Besov spaces by the corresponding Lizorkin--Triebel spaces.
      In particular, we see that enlarging $L^n(\R^n)=\dot{F}_{n,2}^0(\R^n)$ in \cites{Lio-Mas,Fur-Rie-Ter-2000} even slightly to $\dot{F}_{n,q}^0(\R^n)$ with $q>2$
      leads to non-uniqueness.
      Moreover, we obtain the breakdown of unconditional uniqueness for small mild solutions in 
      $L^{\infty}(0,T;{\rm BMO}^{-1}(\R^n))$ since ${\rm BMO}^{-1}(\R^n)=\dot{F}_{\infty,2}^{-1}(\R^n)$.
      This is different from the phenomenon in \cite{Coi-Pal-25}, which concerns non-uniqueness of smooth solutions
      for large data in ${\rm BMO}^{-1}(\T^3)$.
      See Figure~\ref{fig:uniq-nonuniq} below.
    \item 
      On the results for the stationary Navier--Stokes equations in critical spaces,
      Chen \cite{Che-93} proved existence and uniqueness of small solutions in $L^n(\mathbb{R}^n)$,
      and Kaneko--Kozono--Shimizu \cite{Kan-Koz-Shi-19} established unique solvability in
      $\dot{B}_{p,q}^{n/p-1}(\mathbb{R}^n)$ for small external forces under condition (U1).
      Thus, for $(p,q)$ satisfying (U1) or (U2), small steady solutions are unique in $\dB_{p,q}^{n/p-1}(\R^n)$.
      Consequently, Theorem~\ref{thm:main} asserts sharp non-uniqueness of small solutions in $\dot{B}_{p,q}^{n/p-1}(\mathbb{R}^n)$.
      For the ill-posedness in the sense of discontinuity of the solution map, see \cites{Tsu-19-ARMA,Li-Yu-Zhu-25,Tan-Tsu-Zha-25,Fuj-24}.
    \item Concerning non-uniqueness for steady Navier--Stokes flows, Luo \cite{Luo-ARMA-2019}
      constructed nontrivial solutions to \eqref{eq:sNS} belonging to $L^2(\mathbb{T}^n)$ with $n \geq 4$.
      See \cite{Rieusset-25-JFA} for two-dimensional case in $H^{-1}(\T^2)$.
      Note that both spaces are supercritical and the three-dimensional case has been open.
      Our result appears to be the first to establish non-uniqueness of steady solutions in scaling critical spaces. Moreover, our result includes the three-dimensional case.
      Meanwhile, our method cannot be implemented in two dimensions. 
      This obstruction essentially reduces to the well-known difficulty of the solvability for the two-dimensional stationary Navier--Stokes equations on $\R^2$.  
    \item 
    As will be explained later, our construction of solutions relies on the structure provided by the classical Nash lemma, whereas we do not employ any of the periodic building blocks commonly used in convex integration approaches to fluid equations, such as Beltrami or Mikado flows. 
    \item 
    Theorem~\ref{thm:main} remains valid, with essentially the same proof, if $\mathbb{R}^n$ is replaced by $\mathbb{T}^n$.
    Shortly after the first version of the present paper appeared on arXiv, Cheskidov and Hou \cite{Che-Hou-26} obtained related periodic results.
    More precisely, they proved, for every $n \geq 2$, $1 \leq p,q \leq \infty$, and $s<0$, the existence of non-unique stationary solutions in $B_{p,q}^{s}(\mathbb{T}^n)$ and the failure of unconditional uniqueness of mild solutions in $C([0,T);B_{p,q}^{s}(\mathbb{T}^n))$.
    Since they stated that the condition $s<0$ includes part of the subcritical regime, their theorem is stated in a range wider than the critical scale emphasized here.
    We point out, however, that our construction in the torus case gives the full negative range $B_{p,q}^{s}(\mathbb{T}^n)$, $s<0$, $1 \leq p,q \leq \infty$.
    Moreover, at the case $s=0$, we still give non-uniqueness  whenever $2<q\leq \infty$, which is not covered by \cite{Che-Hou-26}.
\end{enumerate}
\end{rem}
\begin{figure}[H]
\centering
\begin{tikzpicture}[x=5.1cm,y=4cm,>=Stealth,line cap=round,line join=round]
  \def\yn{0.45}
  \draw[->] (0,0) -- (1.12,0) node[below] {$1/q$};
  \draw[->] (0,0) -- (0,1.12) node[left] {$1/p$};

  \node[below left] at (0,0) {$O$};
  \node[below] at (0.5,0) {$1/2$};
  \node[below] at (1,0) {$1$};
  \node[left] at (0,1) {$1$};
  \node[left] at (0,\yn) {$1/n$};

  \fill[blue!12] (0,\yn) rectangle (1,1);
  \fill[red!10] (0,0) rectangle (1,\yn);

  \draw[line width=2.6pt,red]  (0,0) -- (0,\yn);
  \draw[line width=2.2pt,blue] (0,\yn) -- (0,1) -- (1,1);
  \draw[very thick] (1,1) -- (1,0) -- (0,0);

  \draw[line width=2.6pt,red] (0,\yn) -- (0.5,\yn);
  \draw[line width=2.6pt,blue] (0.5,\yn) -- (1,\yn);

  \draw[line width=2.6pt,red] (0,0) -- (1,0);

  \draw[densely dashed] (0.5,0) -- (0.5,\yn);

  \fill[blue] (0.5,\yn) circle (0.020);  
  \fill[red]  (0.5,0) circle (0.015);      

  \draw[line width=2.6pt,red]  (1,0) -- (1,\yn);
  \draw[line width=2.6pt,blue] (1,\yn) -- (1,1);

  \draw[thin] (0.52,\yn+0.01) -- (0.62,\yn+0.09);
  \node[font=\tiny,anchor=west] at (0.62,\yn+0.09) {$L^n,\,\dot{B}^{0}_{n,2}$};

  \draw[thin] (0.52,0.01) -- (0.58,0.12);
  \node[font=\tiny,anchor=west] at (0.58,0.12) {$\mathrm{BMO}^{-1},\,\dot{B}^{-1}_{\infty,2}$};
\end{tikzpicture}
\caption{Let $X_{p,q}$ be either $\dot{B}_{p,q}^{n/p-1}(\R^n)$ or $\dot{F}_{p,q}^{n/p-1}(\R^n)$. The blue region indicates the range of $(1/q,1/p)$ for which mild solutions to~\eqref{eq:NS-intro} are unique in $C([0,T);X_{p,q})$ and small stationary solutions in $X_{p,q}$ are unique, whereas the red region indicates the range where uniqueness fails for both non-stationary and stationary solutions. Note that the continuous embedding $X_{p_1,q_1}\hookrightarrow X_{p_2,q_2}$ for $1\leq p_1\leq p_2\leq \infty$ and $1\leq q_1\leq q_2\leq \infty$ implies that the underlying function space becomes larger as a point in the figure moves leftward or downward.}
\label{fig:uniq-nonuniq}
\end{figure}
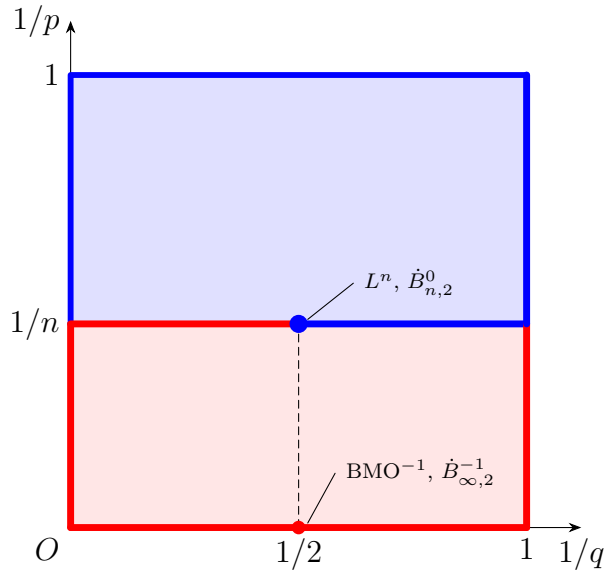
\raggedbottom
\subsection{Outline of the proof}\label{subsec:idea}
Let us explain the idea of the proof of Theorem \ref{thm:main}.
The strategy is that we first establish a dominant part of the solution which lies in $\dB_{p,q}^{n/p-1}(\R^n)$ with $(p,q)$ satisfying (N1) or (N2), and then solve the remaining perturbative problem by a fixed point argument
in a function space where well-posedness is available.
More precisely, we construct almost frequency-localized building blocks $\{V_j\}_{j\geq0}$ of stationary flows, each of which are essentially built on the
$j$th frequency shell, and design the principal profile
\begin{equation}\label{intro:V}
  V = \sum_{j=0}^{\infty} V_j \in \dot{B}_{p,q}^{\frac np-1}(\mathbb{R}^n)\setminus \{0\}
\end{equation}
so that the associated residual forcing
\begin{equation}\label{intro:defF}
  F := -\Delta V + \mathbb{P}\div (V\otimes V)
\end{equation}
belongs to well-posedness\footnote{See \cite{Kan-Koz-Shi-19} for well-posedness of the stationary Navier--Stokes equations in critical Besov spaces.} class $\dB_{r,1}^{n/r-3}(\R^n)$ ($r<n$) and admits the following smallness hierarchy:
\begin{equation}\label{intro:F-V}
  \|F\|_{\dot{B}_{r,1}^{\frac nr-3}} 
  \ll 
  \|V\|_{\dot{B}_{p,q}^{\frac np-1}} 
  \ll 
  1.
\end{equation}
Once \eqref{intro:F-V} is achieved, the perturbation\footnote{The perturbed system is given in \eqref{eq:W} below.} $W$ driven by $F$ can be constructed in
$\dot{B}_{r,1}^{n/r-1}(\mathbb{R}^n)$ by well-posedness theory, and we see that 
\begin{align}
    \n{W}_{\dB_{r,1}^{\frac{n}{r}-1}} \leq C\n{F}_{\dB_{r,1}^{\frac{n}{r}-3}} \ll \n{V}_{\dB_{p,q}^{\frac{n}{p}-1}}.
\end{align}
Hence, the desired steady state is obtained in the form $U = V + W$.
Note that $U \neq 0$ since 
\begin{align}
    \n{U}_{\dB_{p,q}^{\frac{n}{p}-1}} \geq \n{V}_{\dB_{p,q}^{\frac{n}{p}-1}} - C\n{W}_{\dB_{r,1}^{\frac{n}{r}-1}} \geq \frac{1}{2}\n{V}_{\dB_{p,q}^{\frac{n}{p}-1}} > 0.
\end{align}
Furthermore, to construct a non-stationary solution $u$ whose asymptotic profile is $U$, we decompose $u=U+w$.
Then, we may follow the standard stability argument to construct a small dissipative perturbed flow
$w \in C([0,\infty);\dot{B}_{p,q}^{n/p-1}(\mathbb{R}^n)) \cap \widetilde{L^\theta}(0,\infty;\dot{B}_{p,q}^{n/p - 1 + 2/\theta}(\mathbb{R}^n))$ with some $2<\theta<\infty$.
Then, the auxiliary dissipative norm enables us to show
$\|w(t)\|_{\dot{B}_{p,q}^{n/p-1}}\to0$ as $t\to\infty$.

We now explain how to construct the leading profile $V$ satisfying \eqref{intro:F-V}.
A natural first attempt, motivated by existing works such as~\cite{Coi-Pal-25}, would be to choose each $V_j$
as a superposition of plane waves concentrated on a single highly lacunary frequency scale
$N_j = A^{\gamma b^j}$ ($A \gg 1$), for instance,
$V_j(x)=\sum_{k\in\mathcal{K}} f_{k,j}(x)\cos(N_j a_k\cdot x)k$,
where $\mathcal{K}\subset\mathbb{R}^n$ is a finite set, $f_{k,j}$ are smooth amplitudes, and $a_k$ are constant vectors with $a_k \cdot k = 0$.
However, such a single-scale construction is too rigid to capture the delicate dependence on the Besov summability index $q$, which is essential for observing the subtle non-uniqueness phenomena addressed in (N1).
Instead, we distribute the $j$th building block over many frequency scales.
To this end, for $\lambda\gg1$ (which is a parameter corresponding to $\varepsilon_m^{-1/\alpha}$ with some $\alpha$) and a fixed large number $\mu \in \N$, we introduce a tetration-like sequence\footnote{This is defined by $\lambda_0=\lambda$ and $\lambda_j=\lambda^{2^{\lambda_{j-1}^{4\mu}}}$; see \eqref{eq:def-lambdaj}.}
$\{\lambda_j\}_{j\geq 0}$
and define the $j$th level frequency index set by
\[
  \Lambda_j \coloneqq \Bigl\{\ell\in\mathbb{N}\ ;\ \mu\log_{\lambda}\lambda_{j-1}\leq \ell \leq \log_{\lambda}\lambda_j\Bigr\},
\]
and take $V_j$ in the schematic form\footnote{In the actual construction of $V_j$, we apply a mollifier and introduce several additional modifications to enforce the divergence-free condition, so that its form becomes somewhat more involved.}
\begin{align}
    V_j(x)
    = 
    \sum_{k\in\mathcal{K}} 
    f_{k,j}(x)
    \sum_{\ell\in\Lambda_j}\frac{1}{\sqrt{h_j\ell}}\cos \sp{\lambda^{\ell} a_k\cdot x}k,
    \qquad
    h_j
    :=
    \sum_{\ell \in \Lambda_j}\frac{1}{\ell}.
\end{align}
Here, the choice of $1/\sqrt{\ell}$ is inspired by \cites{Yon-10,Tsu-19-ARMA}.
With this multi-scale design, 
the oscillation calculus in Section \ref{sec:osc} below yields 
\begin{align}
    \n{V_j}_{\dB_{p,q}^{\frac{n}{p}-1}}
    \sim 
    \sum_{k \in \mathcal{K}}
    \frac{\n{f_{k,j}}_{L^p}}{\sqrt{h_j}}
    \sp{
    \sum_{\ell \in \Lambda_j}
    \frac{\lambda^{(\frac{n}{p}-1)q\ell}}{\ell^{\frac{q}{2}}}
    }^{\frac{1}{q}},
\end{align}
which may shrink as $j \to \infty$ if and only if $(p,q)$ satisfies (N1) or (N2).
Thus, the series \eqref{intro:V} converges and we may define $V$.
For the analysis of $F$, we decompose $F$ as
\begin{align}
    F
  = \sum_{j=1}^{\infty}\Bigl(-\Delta V_{j-1}+\mathbb{P}\div(V_j\otimes V_j)\Bigr)
    + \sum_{j_1\neq j_2}\mathbb{P}\div(V_{j_1}\otimes V_{j_2})
    =:F_1+F_2.
\end{align}
The cross-shell interaction term $F_2$ is favorable, as it remains in the high-frequency region. By contrast, $F_1$ exhibits a  more delicate structure. Indeed, within the nonlinear interaction $\div(V_{j}\otimes V_j)$, resonances among modes inside the frequency shell $\Lambda_j$ generate an inverse-cascade transfer toward low frequencies, which exhibit the worst regularity. 
By selecting the directional set $\mathcal{K}$ and the amplitudes $f_{k,j}$ appropriately through the Nash geometric lemma, one may arrange for this low-frequency contribution to cancel the linear part $-\Delta V_{j-1}$. Consequently, the forcing term $F$ attains improved regularity and smaller magnitude relative to $V$.

\subsection{Organization of this paper}
This paper is organized as follows.
In Section \ref{sec:pre}, we summarize the notations used in this paper and elementary facts on Besov and Chemin--Lerner spaces.
In Section \ref{sec:osc}, we provide key Besov estimates for oscillating functions, which play a key role in constructing the leading part of the non-unique stationary solutions.
In Section \ref{sec:V}, we construct the principal part of the non-trivial steady state and calculate its Besov norms and the perturbative force.
In Section \ref{sec:pf}, we make use of the special flow constructed in Section \ref{sec:V} to complete the proof of Theorem \ref{thm:main}.

\section{Preliminaries}\label{sec:pre}
Throughout this paper, we denote by $C$ and $c$ the constants, which may differ in each line. In particular, $C=C(*,...,*)$ denotes the constant which depends only on the quantities appearing in parentheses. 
Let $\N$ be the set of all positive integers and put $\N_0:=\N \cup \{0\}$.
For $\mathbb{M} \subset \Z$, we define $2^{\mathbb{M}}:=\{2^m\ ;\ m \in \mathbb{M}\}$.
For two quantities $A_+$ and $A_-$, we use the notation
$\sum_{\pm} A_\pm := A_+ + A_-$.
The identity matrix is denoted by $\Id$.
For a metric space $(X,d)$, $x_0 \in X$, and $r>0$, set $B_X(x_0,r):=\{x \in X\ ;\ d(x,x_0) \leq r\}$.
Let $\mathscr{S}(\mathbb{R}^n)$ be the set of all Schwartz functions on $\mathbb{R}^n$, and 
$\mathscr{S}'(\mathbb{R}^n)$ denotes the set of all tempered distributions on $\mathbb{R}^n$.

Next, we recall the definitions of Besov and Chemin--Lerner spaces.
Let $\varphi_0 \in \mathscr{S}(\mathbb{R}^n)$ satisfy 
\begin{align}
    \supp \widehat{\varphi_0} \subset \Mp{ \xi \in \mathbb{R}^n\ ;\ 2^{-1} \leq | \xi | \leq 2 },\quad
    0 \leq \widehat{\varphi_0}(\xi) \leq 1, 
\end{align}
and 
\begin{align}
    \sum_{j \in \mathbb{Z}}
    \widehat{\varphi_j}(\xi) = 1 \qquad {\rm for\ all\ }\xi \in  \mathbb{R}^n \setminus \{0\},
\end{align}
where we have set $\varphi_j(x):=2^{nj}\varphi_0(2^jx)$.
Using this $\{\varphi_j\}_{j \in \Z}$, we may define the Littlewood--Paley decomposition as 
\begin{align}
    \Delta_j f := \mathscr{F}^{-1}\lp{\widehat{\varphi_j}(\xi)\widehat{f}(\xi)}
\end{align}
for $f \in \mathscr{S}'(\R^n)$ and $j \in \Z$.
For $1 \leq p,q \leq \infty$ and $s \in \mathbb{R}$, the Besov space $\dB_{p,q}^s(\mathbb{R}^n)$ is defined as 
\begin{align}
    \dB_{p,q}^s(\mathbb{R}^n)
    :={}&
    \Mp{
    f \in \mathscr{S}'(\mathbb{R}^n)/\mathscr{P}(\mathbb{R}^n)
    \ ; \ 
    \n{f}_{\dB_{p,q}^s}<\infty
    },\\
    \n{f}_{\dB_{p,q}^s}
    :={}&
    \n{
    \Mp{
    2^{sj}
    \n{\Delta_j f}_{L^p}
    }_{j \in \mathbb{Z}}
    }_{\ell^{q}(\mathbb{Z})},
\end{align}
where $\mathscr{P}(\mathbb{R}^n)$ is the set of all polynomials on $\mathbb{R}^n$.
It is well-known that if $s <n/p$ or $(s,q) = (n/p,1)$, then it holds
\begin{align}
    \dB_{p,q}^s(\mathbb{R}^n)
    \sim
    \Mp{
    f \in \mathscr{S}'(\mathbb{R}^n)
    \ ; \ 
    \n{f}_{\dB_{p,q}^s}<\infty,\quad
    f = \sum_{j \in \mathbb{Z}}\Delta_j f \quad {\rm in\ }\mathscr{S}'(\mathbb{R}^n)
    }.
\end{align}
Let us prepare a bilinear estimate in Besov spaces.
\begin{lemm}\label{lemm:nonlin-sta}
    Let $n \geq 3$.
    Then, for any $1 \leq p,q,r \leq \infty$ satisfying 
    \begin{align}\label{p-r}
        \frac{n}{p}+\frac{n}{r} > 2,
        \qquad
        0 \leq \frac{n}{r} - \frac{n}{p} < 1,
    \end{align}
    there exists a constant $C=C(n,p,q,r)>0$ such that
    \begin{align}
        \n{(-\Delta)^{-1}\mathbb{P}\div (V \otimes W)}_{\dB_{r,1}^{\frac{n}{r}-1}}
        \leq 
        C
        \n{V}_{\dB_{p,q}^{\frac{n}{p}-1}}
        \n{W}_{\dB_{r,1}^{\frac{n}{r}-1}}
    \end{align}
    for all $V \in \dB_{p,q}^{n/p-1}(\R^n)$ and $W \in \dB_{r,1}^{n/r-1}(\R^n)$.
\end{lemm}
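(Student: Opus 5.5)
The plan is to use that $(-\Delta)^{-1}\mathbb{P}\div$ is a homogeneous Fourier multiplier of order $-1$, which acts boundedly on each dyadic block. It therefore suffices to prove the product estimate
\begin{align}
    \n{VW}_{\dB_{r,1}^{\frac{n}{r}-2}}
    \leq C \n{V}_{\dB_{p,q}^{\frac{n}{p}-1}}\n{W}_{\dB_{r,1}^{\frac{n}{r}-1}}.
\end{align}
For this I will use Bony's decomposition $VW=T_VW+T_WV+R(V,W)$, with $T_fg=\sum_k S_{k-1}f\,\Delta_k g$ and $R(f,g)=\sum_{|k-k'|\leq 1}\Delta_k f\,\Delta_{k'}g$, and bound each piece separately. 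Throughout, the sequences $a_k:=2^{k(\frac np-1)}\n{\Delta_kV}_{L^p}\in\ell^q$ and $b_k:=2^{k(\frac nr-1)}\n{\Delta_kW}_{L^r}\in\ell^1$ carry the norms of $V$ and $W$.

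\textbf{Step 1: the term $T_VW$.} By Bernstein, $\n{\Delta_{k'}V}_{L^\infty}\leq C2^{k'}a_{k'}\leq C2^{k'}\n{V}_{\dB_{p,q}^{n/p-1}}$, so $V\in\dB^{-1}_{\infty,\infty}$. Since the regularity index $-1$ is negative, the low-frequency sum is controlled by its top term: $\n{S_{k-1}V}_{L^\infty}\leq C2^k\n{V}_{\dB_{p,q}^{n/p-1}}$. Hence
\begin{align}
    2^{k(\frac nr-2)}\n{S_{k-1}V\,\Delta_kW}_{L^r}\leq C\n{V}_{\dB_{p,q}^{n/p-1}}\, b_k ,
\end{align}
and this sequence is summable in $k$.

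\textbf{Step 2: the term $T_WV$ (the main obstacle).} Here $V$ is only $\ell^q$-summable, so the $\ell^1$ summability of the output must come entirely from $W$. Define $m$ by $\frac1r=\frac1p+\frac1m$; this is admissible because $\frac nr-\frac np\geq0$. Bernstein gives $\n{\Delta_{k'}W}_{L^m}\leq C2^{k'(1-\frac nm)}b_{k'}$, where $1-\frac nm=1-\frac nr+\frac np>0$ by the assumption $\frac nr-\frac np<1$. It follows that
\begin{align}
    \n{S_{k-1}W}_{L^m}\leq C2^{k(1-\frac nm)}d_k,\qquad d_k:=\sum_{k'<k}2^{-(k-k')(1-\frac nm)}b_{k'} .
\end{align}
The sequence $d$ is the convolution of $b\in\ell^1$ with a geometrically decaying kernel, so $d\in\ell^1$. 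Hölder's inequality then yields
\begin{align}
    2^{k(\frac nr-2)}\n{S_{k-1}W\,\Delta_kV}_{L^r}\leq C d_k a_k .
\end{align}
This is summable because $a\in\ell^\infty$ and $d\in\ell^1$. The point is to exploit the strict inequality $\frac nr-\frac np<1$ to gain $\ell^1$ from the low-frequency factor, rather than putting $S_{k-1}W$ in $L^\infty$.

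\textbf{Step 3: the remainder $R(V,W)$.} First choose $\tilde p\geq p$ with $\frac1{\tilde p}=\min\{\frac1p,1-\frac1r\}$. Then $\dB_{p,q}^{n/p-1}\hookrightarrow\dB_{\tilde p,q}^{n/\tilde p-1}$, the exponent $s$ given by $\frac1s=\frac1{\tilde p}+\frac1r$ satisfies $s\geq1$, and
\begin{align}
    \frac n{\tilde p}+\frac nr=\min\Bigl\{\frac np+\frac nr,\,n\Bigr\}>2 ,
\end{align}
using $n\geq3$. The frequencies of $\Delta_{k'}V\,\tilde\Delta_{k'}W$ lie in a ball of radius $\sim2^{k'}$, so only $k'\geq k-3$ contribute to $\Delta_k R(V,W)$. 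Bernstein ($L^s\to L^r$) together with Hölder gives
\begin{align}
    2^{k(\frac nr-2)}\n{\Delta_kR(V,W)}_{L^r}\leq C\sum_{k'\geq k-3}2^{-(k'-k)(\frac n{\tilde p}+\frac nr-2)}\tilde a_{k'}b_{k'} ,
\end{align}
where $\tilde a$ is the analogue of $a$ for the index $\tilde p$. The exponent $\frac n{\tilde p}+\frac nr-2$ is positive, so by Young's inequality on $\Z$ the right-hand side is summable in $k$, with sum bounded by $C\n{\tilde a}_{\ell^\infty}\n{b}_{\ell^1}$.

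Combining Steps 1--3 gives the product estimate, and hence the lemma after applying $(-\Delta)^{-1}\mathbb{P}\div$.
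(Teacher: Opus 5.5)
Your proof is correct. The paper gives no argument of its own here: after the same reduction to a product estimate, it simply invokes the general Besov product law of Abidi--Paicu (their Corollary 2.5). Your Bony-decomposition argument is a self-contained proof of exactly the special case needed, and it shows where each hypothesis enters:
\begin{itemize}
\item[(a)] $\frac nr-\frac np\ge 0$ permits the splitting $\frac1r=\frac1p+\frac1m$ in $T_WV$;
\item[(b)] the strict inequality $\frac nr-\frac np<1$ makes the low-frequency sum of $W$ in $L^m$ geometric, so the $\ell^1$-summability comes from $W$ alone;
\item[(c)] $\frac np+\frac nr>2$ together with $n\ge 3$ is used only in the remainder, through your exponent $\tilde p$.
\end{itemize}
The device $\tilde p$ is genuinely needed when $\frac1p+\frac1r>1$, e.g.\ $p=r=1$. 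In the paper's application $\frac1{p_*}+\frac1r<\frac3n\le 1$, so $\tilde p=p_*$ would suffice there. Your argument also makes visible that only $\sup_k a_k$ is used, so the estimate holds with $\dot B^{n/p-1}_{p,\infty}$ on the right. The citation, by contrast, buys brevity and a much wider range of indices.

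Two cosmetic points. First, with the paper's partition, $\Delta_j$ is localized to $2^{j-1}\le|\xi|\le 2^{j+1}$, so $S_{k-1}f\,\Delta_kg$ need not be spectrally supported in an annulus. You need annular support in Steps 1 and 2, since $\frac nr-2$ may be negative. Shift the low-frequency cutoff by one or two indices and enlarge the diagonal in $R$ accordingly. Second, in Step 3 the factor $b_{k'}$ should be the neighbour sum $\sum_{|k''-k'|\le N_0}b_{k''}$, which has the same $\ell^1$ bound.
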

We omit the precise proof of Lemma \ref{lemm:nonlin-sta} as it is immediately obtained by \cite{Abidi-Paicu-2007}*{Corollary 2.5}.
See \cites{Bah-Che-Dan-11,Saw-18} for more information on Besov spaces.

Next, we recall 
the Chemin--Lerner spaces which were first introduced in \cite{Che-Ler-95}:
\begin{align}
    \widetilde{L^{\theta}}(I;\dB_{p,q}^s(\R^n))
    :={}&
    \Mp{
    F:I \to  \mathscr{S}'(\mathbb{R}^n)/\mathscr{P}(\mathbb{R}^n)
    \ ; \ 
    \n{F}_{\widetilde{L^{\theta}}(I;\dB_{p,q}^s)}
    <\infty
    },\\
    \n{F}_{\widetilde{L^{\theta}}(I;\dB_{p,q}^s)}
    :={}&
    \n{
    \Mp{
    2^{sj}
    \n{\Delta_j F}_{L^\theta(I;L^p)}
    }_{j \in \mathbb{Z}}
    }_{\ell^{q}(\mathbb{Z})}
\end{align}
for $1 \leq p,q,\theta \leq \infty$, $s \in \mathbb{R}$, and an interval $I \subset \R$.
We also use the notation
\begin{align}
    \widetilde{C}(I ; \dB_{p,q}^s(\mathbb{R}^n))
    :=
    C(I ; \dB_{p,q}^s(\mathbb{R}^n))
    \cap
    \widetilde{L^{\infty}}(I ; \dB_{p,q}^s(\mathbb{R}^n)).
\end{align}
The Chemin--Lerner spaces are well-known to be particularly convenient when exploiting the maximal regularity of the heat kernel. 
We take advantage of this framework through the following lemma, which will be used in our stability analysis for non-stationary flows around the steady state.
\begin{lemm}[\cite{Fuj-AIPHC}]\label{lemm:nonlin-Duha}
Let $n \geq 2$.
Let $p$, $q$, $\theta$ satisfy
\begin{align}
    2 \leq p < 2n,
    \qquad
    1 \leq q \leq \infty,
    \qquad 
    2<\theta < \infty,
    \qquad
    \frac{n}{p}
    -1+\frac{1}{\theta}>0.
    \label{p-r-theta}
\end{align}
Then, there exists a constant $C=C(n,p,q,\theta)>0$ such that
\begin{align}
    &\n{
    \int_{t_0}^t
    e^{(t-\tau)\Delta}
    \mathbb{P}\div(v(\tau) \otimes w(\tau))d\tau
    }_{\widetilde{L^{\infty}}(I;\dB_{p,q}^{\frac{n}{p}-1}) \cap \widetilde{L^{\theta}}(I;\dB_{p,q}^{\frac{n}{p}-1+\frac{2}{\theta}})}\\
    &\quad\leq
    C
    \n{v}_{\widetilde{L^{\infty}}(I;\dB_{p,q}^{\frac{n}{p}-1})}
    \n{w}_{\widetilde{L^{\theta}}(I;\dB_{p,q}^{\frac{n}{p}-1+\frac{2}{\theta}})}
\end{align}
for all 
$I=(t_0,t_1) \subset \mathbb{R}$, 
$v \in \widetilde{L^{\infty}}(I;\dB_{p,q}^{n/p-1}(\mathbb{R}^n))$, 
and 
$w \in \widetilde{L^\theta}(I;\dB_{p,q}^{n/p-1+2/\theta}(\mathbb{R}^n))$.
\end{lemm}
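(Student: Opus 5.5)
We prove the estimate by paraproduct decomposition combined with the maximal regularity of the heat semigroup in Chemin--Lerner norms. Write $\mathcal{D}(v,w):=\int_{t_0}^t e^{(t-\tau)\Delta}\mathbb{P}\div(v\otimes w)\,d\tau$. Since $\Delta_j$ commutes with $e^{(t-\tau)\Delta}\mathbb{P}\div$, the standard localized smoothing $\n{e^{s\Delta}\Delta_j f}_{L^p}\leq Ce^{-c2^{2j}s}\n{\Delta_j f}_{L^p}$ together with Young's inequality in time gives, for $1\leq \rho\leq \theta$,
\begin{align}
    2^{(\frac{n}{p}-1)j}\n{\Delta_j\mathcal{D}(v,w)}_{L^\infty(I;L^p)}
    +
    2^{(\frac{n}{p}-1+\frac{2}{\theta})j}\n{\Delta_j\mathcal{D}(v,w)}_{L^\theta(I;L^p)}
    \leq
    C2^{(\frac{n}{p}-2+\frac{2}{\rho})j}\n{\Delta_j(v\otimes w)}_{L^\rho(I;L^p)}.
\end{align}
The right-hand side does not depend on $t_0$, so all constants are uniform in $I$. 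Hence it suffices to bound $v\otimes w$ in $\widetilde{L^\rho}(I;\dB_{p,q}^{n/p-2+2/\rho})$. We take $\rho=\theta$, and fall back on a mixed $\rho$ with $1/\rho=1/\theta+1/\infty$ only where needed.

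Next we split $v\otimes w=T_vw+T_wv+R(v,w)$ via Bony's decomposition, with all time norms taken blockwise (Chemin--Lerner). We use three estimates.
\begin{itemize}
\item For $T_vw$: $\n{S_{k-1}v}_{L^\infty(I;L^\infty)}\lesssim\sum_{k'<k}2^{k'}\cdot 2^{(\frac np-1)k'}\n{\Delta_{k'}v}_{L^\infty L^p}$ by Bernstein. Because the weight $2^{k'}$ has positive exponent, the sum is $\lesssim 2^{k}\n{v}_{\widetilde{L^\infty}\dB^{n/p-1}_{p,\infty}}$. Multiplying by $\n{\Delta_k w}_{L^\theta L^p}$ gives the required $\ell^q$ bound with regularity $n/p-2+2/\theta$ for the output.
\item For $T_wv$: the low-frequency factor $w$ is estimated in $L^\theta(I;L^\infty)$ via $2^{k'(n/p)}\cdot 2^{(\frac np-1+\frac2\theta)k'}$ weighted reversely. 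The exponent of the resulting geometric sum is $1-2/\theta>0$, which is where $\theta>2$ is used. The product then again lies in $\widetilde{L^\theta}\dB^{n/p-2+2/\theta}_{p,q}$.
\item For the remainder $R(v,w)$: we estimate $\Delta_j\sum_{k\geq j-3}\Delta_kv\widetilde{\Delta}_kw$ by placing the product in $L^{p/2}$ (valid since $p\geq2$) and using Bernstein $L^{p/2}\to L^p$ at frequency $2^j$, which gains $2^{jn/p}$. The total exponent is $j\bigl(\frac np+\frac np-2+\frac2\theta\bigr)$ against $k$-weights, so the sum over $k\geq j$ converges provided $\frac{2n}{p}-2+\frac2\theta>0$. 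This follows from $\frac np-1+\frac1\theta>0$, which is exactly the last hypothesis in \eqref{p-r-theta}. The summation in $j$ is then done by Young's convolution inequality for sequences, with $\ell^q\times\ell^\infty$.
\end{itemize}

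The main obstacle is the remainder. Both $\ell^q$ summability and the endpoint $p<2n$ enter there, and we must be careful that the $L^{p/2}$ Bernstein step is legitimate and yields a positive total exponent. The condition $p<2n$ ensures $\frac{n}{p}-1+\frac{2}{\theta}$ can be positive for some admissible $\theta$, compatible with the last condition. The paraproduct terms are routine once $\theta>2$ is available. Combining the three bounds with the maximal-regularity inequality above gives the lemma.
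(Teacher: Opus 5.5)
Your proof is correct and follows the standard argument. The paper gives no proof of its own and imports the lemma from \cite{Fuj-AIPHC}. Your route is the usual way such Chemin--Lerner bilinear bounds are proved: blockwise heat smoothing with $\rho=\theta$ reduces everything to $\n{v\otimes w}_{\widetilde{L^{\theta}}(I;\dB_{p,q}^{n/p-2+2/\theta})}\lesssim\n{v}_{\widetilde{L^{\infty}}(I;\dB_{p,q}^{n/p-1})}\n{w}_{\widetilde{L^{\theta}}(I;\dB_{p,q}^{n/p-1+2/\theta})}$, which you then prove by Bony's decomposition, using $\theta>2$ for $T_wv$ and $p\geq 2$, $2n/p-2+2/\theta>0$ for the remainder. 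One small correction to your closing commentary: $p<2n$ has no independent role, since it already follows from $\theta>2$ and $n/p-1+1/\theta>0$, and the mixed-$\rho$ fallback is never needed.
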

\section{Highly oscillating functions in Besov spaces}\label{sec:osc}
In our analysis, we frequently use the sharp Besov estimates for finite superpositions of functions obtained by
modulating a smooth amplitude with a plane wave of the form \eqref{df:F} below.
If each Fourier transform $\widehat{f_k}$ of the amplitudes were compactly supported in a low frequency region, then the Fourier support of each $g_{\lambda,k,\ell}$ is concentrated around $\xi \sim \lambda^\ell a_k$, and we have 
\begin{align}
    \n{g_{\lambda,k,\ell}}_{\dB_{p,q}^s} \sim |b_\ell|\lambda^{s\ell}\n{f_k}_{L^p}
\end{align}
For sufficiently large $\lambda$,
these supports are disjoint,
which implies
\begin{equation}\label{eq:heuristic-superposition}
  \|F_{\lambda,L}\|_{\dot B^{s}_{p,q}}
  \sim
  \Mp{\sum_{\ell=1}^L \sp{\lambda^{s \ell}|b_\ell|}^q}^{\frac{1}{q}}
  \sum_{k=1}^K\n{f_k}_{L^p}.
\end{equation}
However, in our construction of the non-unique steady Navier--Stokes flow, 
the amplitudes $f_1,\dots,f_K$ necessarily belong to $C_c^\infty(\mathbb{R}^n)$, 
so their Fourier transforms decay rapidly but are not
compactly supported. 
Therefore, it is necessary to proceed with due care so that the Fourier tails of each $f_k$ may be treated as error terms. The following proposition realizes this.

\begin{prop}\label{prop:F-Besov}
    Let $1 \leq p,q \leq \infty$ and $s \in \R$ satisfy
    \begin{align}
        s>\frac{n}{p}-n.
    \end{align}
    Let $K,L \in \N$.
    Let $a_1,\dots,a_K \subset \mathbb{S}^{n-1}$ satisfy $a_k \neq a_{k'}$ unless $k=k'$.  
    For $\lambda \in 2^{\N}$, $b_1,\dots,b_L \in \C$, and $f_1,\dots,f_K \in \mathscr{S}(\R^n)$,
    set
    \begin{align}\label{df:F}
        F_{\lambda,L}(x)
        :=
        \sum_{k=1}^K
        \sum_{\ell=1}^L
        g_{\lambda,k,\ell}(x),
        \qquad
        g_{\lambda,k,\ell}(x):=b_{\ell}f_k(x)e^{i\lambda^\ell a_k\cdot x}.
    \end{align}
    Then, for any $M \in \N$ with $M > s$, there exists a constant $C>0$, depending only on $n$, $p$, $q$, $s$, $K$, $a_1,\dots,a_K$, and $M$ such that
    \begin{align}
        \n{F_{\lambda,L}}_{\dB_{p,q}^s}
        \leq{}
        &
        C\Mp{\sum_{\ell=1}^L\sp{\lambda^{s\ell}|b_{\ell}|}^q}^{\frac{1}{q}}
        \sum_{k=1}^K\n{f_k}_{L^p}
        \\
        &
        +
        C
        \sum_{k=1}^K
        \sum_{\ell=1}^L
        \lambda^{(s+n(1-\frac{1}{p})-M)\ell}
        |b_{\ell}|
        \n{\nabla^Mf_k}_{L^p\cap L^1}
        \label{est:upper-F}
    \end{align}
    and
    \begin{align}
        \n{F_{\lambda,L}}_{\dB_{p,q}^s}
        \geq{}
        &
        C^{-1}
        \Mp{\sum_{\ell=1}^L\sp{\lambda^{s\ell}|b_{\ell}|}^q}^{\frac{1}{q}}
        \max_{1 \leq k \leq K}\n{f_k}_{L^p}
        \\
        &
        -
        C
        \sum_{k=1}^K
        \sum_{\ell=1}^L
        \lambda^{(s-1)\ell}
        |b_{\ell}|
        \n{\nabla f_k}_{L^p}
        \\
        &
        -
        C
        \sum_{k=1}^K
        \sum_{\ell=1}^L
        \lambda^{(s+n(1-\frac{1}{p})-M)\ell}
        |b_{\ell}|
        \n{\nabla^Mf_k}_{L^p\cap L^1}
        \label{est:lower-F}
    \end{align}
    for all $L \in \N$ and $\lambda \in 2^{\N}$ with $\lambda\geq 2^{100}$.
\end{prop}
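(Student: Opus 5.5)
Since $\widehat{g_{\lambda,k,\ell}}(\xi)=b_\ell\widehat{f_k}(\xi-\lambda^\ell a_k)$, each piece has Fourier transform centered at $\lambda^\ell a_k$, and $|\lambda^\ell a_k|=\lambda^\ell$. The plan is to split each amplitude in frequency: $f_k=P_{<}f_k+P_{\geq}f_k$, where $P_<$ is a smooth Fourier cutoff to $|\xi|\leq \lambda^{\ell}/8$. The cutoff depends on $\ell$, so the split is applied term by term. The low part of $g_{\lambda,k,\ell}$ then has Fourier support in the annulus $\{7\lambda^\ell/8\leq|\xi|\leq 9\lambda^\ell/8\}$. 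It therefore meets only the dyadic blocks $\Delta_j$ with $2^j\sim\lambda^\ell$, a bounded number of them since $\lambda\in 2^{\N}$. Because $\lambda\geq 2^{100}$, the sets of blocks attached to distinct $\ell$ are disjoint.

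\textbf{Upper bound.} For each $j$, only the low parts with the corresponding $\ell$ contribute. Using $\n{\Delta_j(P_<f_k\,e^{i\lambda^\ell a_k\cdot x})}_{L^p}\leq C\n{f_k}_{L^p}$ (the multipliers are $L^1$-bounded), summing in $\ell^q$ gives the first term of \eqref{est:upper-F}. For the high part $h_{k,\ell}:=(P_{\geq}f_k)e^{i\lambda^\ell a_k\cdot x}$, I would not use frequency localization. Instead I bound $\n{h_{k,\ell}}_{\dB^s_{p,q}}\leq \n{h_{k,\ell}}_{\dB^s_{p,1}}$ directly. The blocks $2^j\geq c\lambda^\ell$ contribute at most $C\sum_{2^j\gtrsim\lambda^\ell}2^{(s-M)j}\n{\nabla^M f_k}_{L^p}$, using $|\widehat{P_\geq f_k}(\eta)|$ and Bernstein together with the symbol bound $|\eta|^{-M}$ on the region $|\eta|\gtrsim\lambda^\ell$; this is $\lesssim\lambda^{(s-M)\ell}\n{\nabla^Mf_k}_{L^p}$ since $M>s$. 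For the low blocks $2^j\lesssim\lambda^\ell$, I use Bernstein $L^1\to L^p$: $\n{\Delta_j h}_{L^p}\leq C2^{jn(1-1/p)}\n{\Delta_j h}_{L^1}$, and bound $\n{\widehat h}_{L^\infty}\leq\n{P_\geq f_k}_{L^1}\lesssim\lambda^{-M\ell}\n{\nabla^Mf_k}_{L^1}$. Concretely, this goes via $\n{\Delta_j h}_{L^1}\lesssim 2^{jn}\sup|\widehat h|$ after a Schwartz-tail argument, or more simply via $\n{\Delta_j h}_{L^p}\lesssim 2^{jn(1-1/p)}\n{h}_{L^1}$. The sum $\sum_{2^j\lesssim\lambda^\ell}2^{j(s+n(1-1/p))}$ converges at low frequencies exactly because $s>n/p-n$, and is $\lesssim\lambda^{(s+n(1-1/p))\ell}$. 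Combining the two ranges yields the second term of \eqref{est:upper-F}. The $L^1$ bound $\n{P_\geq f}_{L^1}\lesssim\lambda^{-M\ell}\n{\nabla^M f}_{L^1}$ holds because $P_\geq=\lambda^{-M\ell}\,m(\lambda^{-\ell}D)\cdot\nabla^M$, where $m$ is a smooth homogeneous-type symbol vanishing near $0$ with an integrable kernel.

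\textbf{Lower bound.} For fixed $\ell$, pick $j_\ell$ with $2^{j_\ell}=\lambda^\ell$. The low parts of different $k$ all live in this annulus, so they cannot be separated by dyadic blocks; they must be separated by direction. I would use a smooth conic/angular localization $\chi_k(\xi/|\xi|)$ near $a_k$, with supports disjoint since the $a_k$ are distinct and finite. Define $T_{k,\ell}:=\chi_k(D)\tilde\varphi(\lambda^{-\ell}D)$, which is $L^p$-bounded uniformly in $\ell$ by scaling. Then
\begin{align}
\n{F_{\lambda,L}}_{\dB^s_{p,q}}\gtrsim\Bigl\{\sum_\ell\lambda^{s\ell q}\max_k\n{T_{k,\ell}F}_{L^p}^q\Bigr\}^{1/q},
\end{align}
using that $T_{k,\ell}$ is a finite combination of $\Delta_j$ for $j$ near $j_\ell$. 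Next, $T_{k,\ell}$ applied to the low part of $g_{\lambda,k,\ell}$ equals $b_\ell f_ke^{i\lambda^\ell a_k\cdot x}$ up to the symbol error $(T_{k,\ell}-\text{id})$ acting near $\lambda^\ell a_k$. That symbol vanishes at $\lambda^\ell a_k$, so the error is $\lesssim\lambda^{-\ell}|b_\ell|\n{\nabla f_k}_{L^p}$ by a first-order Taylor/commutator bound. Multiplied by $\lambda^{s\ell}$, this produces the $\lambda^{(s-1)\ell}$ term. The remaining errors come from the other $k'$ at the same $\ell$ (also handled by this Taylor bound, since the symbol vanishes near $a_{k'}$), and from high parts and other $\ell'$. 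The latter are controlled by the same $M$-derivative tail estimate as before.

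The main obstacle is the lower bound: designing $T_{k,\ell}$ so that it simultaneously isolates one direction, is uniformly bounded, and has symbol identically $1$ (or $0$) in a neighborhood of the relevant frequencies. It must also produce precisely the $\n{\nabla f_k}_{L^p}$ error rather than a worse tail. A further point is that taking $\max_k$ gives the factor $\max_k\n{f_k}_{L^p}$ rather than a sum.
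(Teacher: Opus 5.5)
Your argument is correct, but it organizes the estimates differently from the paper.

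\textbf{How the paper proceeds.} The paper never splits the amplitudes. It decomposes every piece as $P_m\Delta_j g_{\lambda,k,\ell}=e^{i\lambda^\ell a_k\cdot x}(\chi_{m,j,k,\ell}*f_k)$ using angular projections $P_0,\dots,P_K$. Only $(m,j)=(k,j_\ell),(k,j_\ell\pm1)$ is kept as principal. A single support lemma (Lemma~\ref{lemm:chi}: all other shifted kernels have Fourier support in $|\xi|\geq c_*\max\{2^j,\lambda^\ell\}$) then gains $M$ derivatives on every off-diagonal piece, in both the upper and the lower bound.

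\textbf{How you proceed.} You split $f_k$ at frequency $\lambda^\ell$. The principal part is then exactly frequency-localized, and the whole tail is the single function $(P_{\geq}f_k)e^{i\lambda^\ell a_k\cdot x}$. Your treatment of that tail is the same mechanism as the paper's split $j\geq j_\ell+1$ versus $j\leq j_\ell$: derivative gain for $2^j\gtrsim\lambda^\ell$, and Young's inequality with $\|\varphi_j\|_{L^p}=C2^{jn(1-1/p)}$ for $2^j\lesssim\lambda^\ell$, summable because $s>n/p-n$. In the lower bound, your $T_{k,\ell}$ with $\tilde\varphi=\widehat{\varphi_0}$ is exactly the paper's $P_k\Delta_{j_\ell}$. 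Your first-moment Taylor bound, using that the symbol equals $1$ at $\lambda^\ell a_k$, is the paper's estimate of $R_{\lambda,m,\ell}$.

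\textbf{Genuine differences.}
\begin{enumerate}[label=(\roman*)]
\item Your upper bound uses no angular localization and no hypothesis on the $a_k$ at all. It therefore shows that \eqref{est:upper-F} holds for every $q$ without the directional assumption; the paper's remark records this only for $q=1$. Distinctness of the $a_k$ is needed only for the lower bound, where it is unavoidable.
\item You remove the same-scale cross-direction terms with the same Taylor bound, since the symbol vanishes at $\lambda^\ell a_{k'}$. This costs a first-order error $\lambda^{(s-1)\ell}|b_\ell|\|\nabla f_{k'}\|_{L^p}$, which is already allowed in \eqref{est:lower-F}. The paper instead places these terms in the $M$-th order tail via angular separation.
\end{enumerate}
The paper's route is more uniform, with one lemma serving both bounds; yours is more elementary and isolates where the directional hypothesis is really used.

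\textbf{Two small points for the write-up.}
\begin{itemize}
\item The bound $\|\Delta_j h\|_{L^1}\lesssim 2^{jn}\sup|\widehat h|$ is not valid in general. Use the Young-inequality route $\|\Delta_j h\|_{L^p}\leq C2^{jn(1-1/p)}\|h\|_{L^1}$, which you also mention.
\item In the lower bound, the cleanest bookkeeping is $\|F\|\geq\|F_{\mathrm{low}}\|-\|F_{\mathrm{high}}\|$. Then the scales $\ell'\neq\ell$ of $F_{\mathrm{low}}$ vanish identically under $T_{k,\ell}$ by Fourier support. $F_{\mathrm{high}}$ is already controlled by your upper-bound estimate. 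You also have $\|P_{<}f_k\|_{L^p}\geq\|f_k\|_{L^p}-C\lambda^{-\ell}\|\nabla f_k\|_{L^p}$.
\end{itemize}
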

\begin{rem}
    For \eqref{est:upper-F} with $q=1$, we may remove the directional assumption on $a_1,\dots,a_K$.
    Indeed, applying \eqref{est:upper-F} with $K=L=1$ to each $g_{k,\ell}$ gives
    \begin{align}
        \n{F_{\lambda,L}}_{\dB_{p,1}^s}
        \leq{}&
        \sum_{k=1}^K
        \sum_{\ell=1}^L
        |b_{\ell}|
        \n{f_k(x)e^{i\lambda^{\ell}a_k \cdot x}}_{\dB_{p,1}^s}
        \\
        \leq{}&
        C
        \sum_{k=1}^K
        \sum_{\ell=1}^L
        \lambda^{s\ell}|b_{\ell}|
        \n{f_k}_{L^p}
        \\
        &
        +
        C
        \sum_{k=1}^K
        \sum_{\ell=1}^L
        \lambda^{(s+n(1-\frac{1}{p})-M)\ell}
        |b_{\ell}|
        \n{\nabla^M f_k}_{L^p\cap L^1}.
    \end{align}
\end{rem}
To prove Proposition \ref{prop:F-Besov}, 
we introduce operators that localize frequencies in the angular direction.
For any $a,a' \in \R^n \setminus \{0\}$, 
we define the angle between $a$ and $a'$ by
\begin{align}\label{df:theta}
    \vartheta(a,a')
    :=
    \operatorname{Arccos}\sp{\frac{a\cdot a'}{|a||a'|}}.
\end{align}
For $a_1,\dots,a_K$ given in Proposition \ref{prop:F-Besov},
we put  
\begin{align}
    \vartheta_*:=\min_{k \neq k'}\vartheta\sp{a_k,a_k'} \in (0,\pi].
\end{align}
Here, we set $\vartheta_*:=1$ if $K=1$.
Let $\widetilde{\phi}_1,\dots,\widetilde{\phi}_K \in C_c^{\infty}(\mathbb{S}^{n-1})$
satisfy 
$\supp \widetilde{\phi}_k \subset \{\sigma \in \mathbb{S}^{n-1}\ ;\ \vartheta(\sigma,a_k)<\vartheta_*/3\}$ 
and 
$\widetilde{\phi}_k(\sigma)=1$ for all $\sigma \in \mathbb{S}^{n-1}$ with $\vartheta(\sigma,a_k)\leq \vartheta_*/6$.
We also set $\widetilde{\phi}_0:=1-\sum_{k=1}^K\widetilde{\phi}_k$.
Using them, we define
\begin{align}
    P_kf:=
    \mathscr{F}^{-1}
    \lp{\phi_k(\xi)\widehat{f}(\xi)},
    \qquad
    \phi_k(\xi)
    :=
    \widetilde{\phi}_k\sp{\xi/|\xi|}
\end{align}
for $k=0,\dots,K$ and $f \in \mathscr{S}'(\R^n)$ with $0 \notin \supp \widehat{f}$.
We remark that there holds
\begin{align}
    P_m\Delta_jg_{\lambda,k,\ell}
    &
    =
    e^{i\lambda^\ell a_k\cdot x}
    \mathscr{F}^{-1}
    \lp{
    \widehat{\chi_{m,j}}(\xi+\lambda^{\ell}a_k)
    \widehat{f_k}(\xi)
    }
    \\
    &=
    e^{i\lambda^\ell a_k\cdot x}
    \sp{\chi_{m,j,k,\ell}*f_k}(x),
\end{align}
where we have defined 
\begin{align}
    \chi_{m,j}:=\mathscr{F}^{-1}[\phi_m\widehat{\varphi_j}],
    \qquad
    \chi_{m,j,k,\ell}(x):=e^{-i\lambda^\ell a_k\cdot x}\chi_{m,j}(x).
\end{align}
For these kernels, there holds
$\n{\chi_{m,j,k,\ell}}_{L^\rho}=\n{\chi_{m,j}}_{L^{\rho}}=2^{n(1-\frac{1}{\rho})j}\n{\chi_{m,0}}_{L^\rho}$ holds for $1 \leq \rho \leq \infty$.
Moreover, we prepare some notations;
for every $k \in\{1,\dots,K\}$ and $\ell \in \{1,\dots,L\}$,
set 
\begin{align}
    &
    j_{\ell}:=\ell\log_2\lambda,
    \\
    &
    R_{k,\ell}
    :=
    \{(m,j) \in \{0,\dots,K\} \times \Z\ ;\ (m,j)\neq(k,j_{\ell}),(k,j_{\ell}\pm 1)\}.
\end{align}

For the proof of Proposition \ref{prop:F-Besov},
we prepare a lemma.
\begin{lemm}\label{lemm:chi}
    There exists a constant $c_*=c_*(K,a_1,\dots,a_K) \in 2^{\Z}$ such that
    for any $\ell \in \{1,\dots,L\}$, $k \in\{1,\dots,K\}$, $(m,j) \in R_{k,\ell}$, and $\lambda \in 2^{\N}$,
    there holds 
    \begin{align}
        \supp \mathscr{F}\lp{\chi_{m,j,k,\ell}}
        \subset
        \Mp{
        \xi \in \mathbb{R}^n\ ;\ |\xi| \geq c_* \max\{2^j,\lambda^{\ell}\}
        }.
    \end{align}
\end{lemm}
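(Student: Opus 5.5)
The plan is to compute the Fourier support of $\chi_{m,j,k,\ell}$ explicitly and then check the bound case by case. By definition,
\begin{align}
\mathscr{F}[\chi_{m,j,k,\ell}](\xi)=\phi_m(\xi+\lambda^\ell a_k)\,\widehat{\varphi_j}(\xi+\lambda^\ell a_k).
\end{align}
So every $\xi$ in the support has the form $\xi=\eta-\lambda^\ell a_k$, where $\eta\in\supp\phi_m\cap\supp\widehat{\varphi_j}$. In particular $2^{j-1}\leq|\eta|\leq 2^{j+1}$, and the direction $\eta/|\eta|$ lies in $\supp\widetilde{\phi}_m$. Since $\lambda\in 2^{\N}$, the index $j_\ell=\ell\log_2\lambda$ is an integer and $\lambda^\ell=2^{j_\ell}$. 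It therefore suffices to show that
\begin{align}
|\eta-\lambda^\ell a_k|\geq c_*\max\{2^j,2^{j_\ell}\}
\end{align}
for all such $\eta$ and all $(m,j)\in R_{k,\ell}$. I split according to the position of $j$ relative to $j_\ell$.

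\emph{Case $j\geq j_\ell+2$.} Here $|\eta|\geq 2^{j-1}\geq 2\lambda^\ell$. Hence
\begin{align}
|\eta-\lambda^\ell a_k|\geq|\eta|-\lambda^\ell\geq|\eta|/2\geq 2^{j-2},
\end{align}
and $2^{j-2}=\tfrac14\max\{2^j,\lambda^\ell\}$.

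\emph{Case $j\leq j_\ell-2$.} Here $|\eta|\leq 2^{j+1}\leq\lambda^\ell/2$. Hence
\begin{align}
|\eta-\lambda^\ell a_k|\geq\lambda^\ell-|\eta|\geq\lambda^\ell/2,
\end{align}
and $\lambda^\ell/2=\tfrac12\max\{2^j,\lambda^\ell\}$.

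\emph{Case $|j-j_\ell|\leq1$.} Since $(m,j)\in R_{k,\ell}$, this forces $m\neq k$, and I use angular separation. If $m\geq1$, then $\vartheta(\eta,a_m)<\vartheta_*/3$. Together with $\vartheta(a_m,a_k)\geq\vartheta_*$, the triangle inequality for angles on the sphere gives $\vartheta(\eta,a_k)>2\vartheta_*/3$. If $m=0$, then $\widetilde{\phi}_0=1-\sum_{k'}\widetilde{\phi}_{k'}$ vanishes wherever $\widetilde{\phi}_k=1$. This uses the fact that the supports of the $\widetilde{\phi}_{k'}$ are pairwise disjoint, since their angular radii are $\vartheta_*/3<\vartheta_*/2$. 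Thus $\vartheta(\eta,a_k)>\vartheta_*/6$ in both subcases.

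Next, for any vector $\eta$ forming an angle $\vartheta\in(\vartheta_*/6,\pi]$ with the unit vector $a_k$, elementary geometry gives
\begin{align}
|\eta-\lambda^\ell a_k|\geq\lambda^\ell\sin(\vartheta_*/6).
\end{align}
Indeed, this is the distance from $\lambda^\ell a_k$ to the cone $\{\vartheta(\cdot,a_k)\geq\vartheta_*/6\}$ when the angle is at most $\pi/2$; when the angle exceeds $\pi/2$ the distance is at least $\lambda^\ell$. Since $2^j\leq 2\lambda^\ell$ in this case, we obtain
\begin{align}
|\eta-\lambda^\ell a_k|\geq\tfrac12\sin(\vartheta_*/6)\max\{2^j,\lambda^\ell\}.
\end{align}

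Finally, I choose $c_*\in 2^{\Z}$ with $c_*\leq\min\{1/4,\tfrac12\sin(\vartheta_*/6)\}$. This depends only on $K$ and $a_1,\dots,a_K$, and equals an absolute constant when $K=1$, where $\vartheta_*=1$. The only step needing any care is the angular case, specifically the justification that $\supp\widetilde{\phi}_0$ avoids the cone on which $\widetilde{\phi}_k=1$; everything else is a direct triangle-inequality estimate.
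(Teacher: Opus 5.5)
Your proof is correct and uses the same two ingredients as the paper: the triangle inequality when the shell $2^j$ is radially separated from $\lambda^\ell$, and the angular separation of $\supp\phi_m$ from $a_k$ when $m\neq k$. The difference is that you split by $|j-j_\ell|\geq 2$ versus $|j-j_\ell|\leq 1$ rather than by $m=k$ versus $m\neq k$, so you need the angular bound only when $2^j\sim\lambda^\ell$ and can avoid the paper's sum-of-squares identity. Your explicit treatment of angles larger than $\pi/2$ is also slightly more careful than the paper. The paper's bound $\sin^2\vartheta(\xi+\lambda^\ell a_k,\lambda^\ell a_k)\geq\sin^2(\vartheta_*/6)$ does not literally cover directions near $-a_k$, which can lie in $\supp\widetilde{\phi}_0$ (e.g.\ when $K=1$); in that regime one simply has $|\xi|^2\geq|\xi+\lambda^\ell a_k|^2+\lambda^{2\ell}$.
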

\begin{proof}
Let $\xi \in \R^n$ satisfy $\mathscr{F}\lp{\chi_{m,j,k,\ell}}(\xi)=\widehat{\chi_{m,j}}(\xi+\lambda^\ell a_k) \neq 0$.
We first consider the case of $m=k$.
Then, by $2^{j-1} \leq |\xi+\lambda^{\ell}a_k| \leq 2^{j+1}$ and $\lambda^{\ell}=2^{j_{\ell}}$, it holds
\begin{align}
    |\xi|
    &=
    |\xi+\lambda^\ell a_k - \lambda^\ell a_k|
    \\
    &\geq 
    \begin{cases}
        |\xi+\lambda^{\ell}a_k| - \lambda^{\ell}
        \geq 
        2^{j-1}-2^{j-2}
        &
        (j \geq j_{\ell}+2),
        \\
        \lambda^{\ell}-|\xi+\lambda^{\ell}a_k|
        \geq 
        2^{j_{\ell}}-2^{j_{\ell}-1}
        &
        (j \leq j_{\ell}-2),
    \end{cases}
    \\
    &\geq
    c
    \max\Mp{2^j,\lambda^{\ell}}.
\end{align}
For the case of $m\neq k$, we see that 
\begin{align}
    |\xi|^2
    ={}&
    |\xi+\lambda^{\ell}a_k|^2+\lambda^{2\ell}
    -
    2\lambda^{\ell}|\xi+\lambda^{\ell}a_k|\cos \vartheta(\xi+\lambda^{\ell}a_k,\lambda^{\ell}a_k)
    \\
    ={}&
    \frac{1}{2}\sp{|\xi+\lambda^{\ell}a_k|-\lambda^{\ell}\cos \vartheta(\xi+\lambda^{\ell}a_k,\lambda^{\ell}a_k)}^2
    \\
    &
    +
    \frac{1}{2}\sp{\lambda^{\ell}-|\xi+\lambda^{\ell}a_k|\cos \vartheta(\xi+\lambda^{\ell}a_k,\lambda^{\ell}a_k)}^2
    \\
    &
    +
    \frac{1}{2}
    \sin^2\vartheta(\xi+\lambda^{\ell}a_k,\lambda^{\ell}a_k)
    \sp{\lambda^{2\ell}+|\xi+\lambda^{\ell}a_k|^2}
    \\
    \geq{}&
    \frac{1}{2}
    \sin^2(\vartheta_*/6)
    \sp{\lambda^{2\ell}+|\xi+\lambda^{\ell}a_k|^2},
\end{align}
which yields $|\xi|\geq c\max\Mp{2^j,\lambda^{\ell}}$.
Thus, we complete the proof.
\end{proof}
Let us now provide the proof of Proposition \ref{prop:F-Besov}
\begin{proof}[Proof of Proposition \ref{prop:F-Besov}]
We split the proof into two steps.
%

\noindent
{\it Step 1. The upper bound estimate.}
%
We may decompose $F_{\lambda,L}$ as 
\begin{align}
    F_{\lambda,L}
    ={}
    \sum_{k=1}^K
    \sum_{\ell=1}^L
    \sum_{m=0}^K
    \sum_{j \in \Z}
    P_m\Delta_jg_{\lambda,k,\ell}
    ={}
    F_{\lambda,L}^{(\p)}
    +
    F_{\lambda,L}^{(\r)},
\end{align}
where the principal part $F_{\lambda,L}^{(\p)}$ and the remainder part $F_{\lambda,L}^{(\r)}$ are defined by
\begin{align}
    &
    F_{\lambda,L}^{(\p)}
    :=
    \sum_{k=1}^K
    \sum_{\ell=1}^L
    \sum_{|j-j_{\ell}|\leq 1}
    P_{k}\Delta_jg_{\lambda,k,\ell},
    \\
    &
    \begin{aligned}
    F_{\lambda,L}^{(\r)}
    :={}&
    \sum_{m=0}^K
    \sum_{k=1}^K
    \sum_{\ell=1}^L
    \sum_{(m,j) \in R_{k,\ell}}
    P_m\Delta_jg_{\lambda,k,\ell}.
    \end{aligned}
\end{align}
For the estimate of the principal part, we see that 
\begin{align}
    \n{F_{\lambda,L}^{(\p)}}_{\dB_{p,q}^s}
    &
    \leq 
    \sum_{k=1}^K
    \Mp{
    \sum_{j' \in \Z}
    \sp{
    2^{sj'}
    \n{\sum_{\ell=1}^L
    \sum_{|j-j_{\ell}|\leq 1}
    P_{k}\Delta_j\Delta_{j'}g_{\lambda,k,\ell}
    }_{L^p}
    }^q
    }^{\frac{1}{q}}
    \\
    &
    =
    \sum_{k=1}^K
    \Mp{
    \sum_{\ell=1}^L
    \sum_{|j'-j_{\ell}|\leq 2}
    \sp{
    2^{sj}
    \n{
    \sum_{|j-j_{\ell}|\leq 1}
    P_k\Delta_j\Delta_{j'}g_{\lambda,k,\ell}
    }_{L^p}
    }^q
    }^{\frac{1}{q}}
    \\
    &
    \leq 
    C
    \sum_{k=1}^K
    \Mp{
    \sum_{\ell=1}^L
    \sum_{|j'-j_{\ell}|\leq 2}
    \sp{
    2^{sj}
    \n{
    g_{\lambda,k,\ell}
    }_{L^p}
    }^q
    }^{\frac{1}{q}}
    \\
    &
    =
    C
    \sum_{k=1}^K
    \Mp{
    \sum_{\ell=1}^L
    \sum_{|j'-j_{\ell}|\leq 2}
    \sp{
    2^{sj}
    |b_\ell|
    }^q
    }^{\frac{1}{q}}
    \n{f_k}_{L^p}
    \\
    &
    \leq
    C
    \Mp{
    \sum_{\ell=1}^L
    \sp{
    \lambda^{s\ell}
    |b_\ell|
    }^q
    }^{\frac{1}{q}}
    \sum_{k=1}^K
    \n{f_k}_{L^p}.
    \label{est:up-Fp}
\end{align}
For the remainder terms, there holds
\begin{align}
    \n{F_{\lambda,L}^{(\r)}}_{\dB_{p,1}^s}
    &
    \leq 
    \sum_{k=1}^K
    \sum_{\ell=1}^L
    \sum_{(m,j) \in R_{k,\ell}}
    2^{sj'}
    \n{P_{k'}\Delta_j\Delta_{j'}g_{\lambda,k,\ell}}_{L^p}
    \\
    &
    \leq 
    C
    \sum_{k=1}^K
    \sum_{\ell=1}^L
    \sum_{(k',j) \in R_{k,\ell}}
    2^{sj}
    \n{P_{k'}\Delta_jg_{\lambda,k,\ell}}_{L^p}.
\end{align}
Here, Lemma \ref{lemm:chi} yields 
\begin{align}
    \n{P_m\Delta_jg_{\lambda,k,\ell}}_{L^p}
    &
    \leq 
    |b_\ell|
    \sum_{j'\geq \max\{j,j_{\ell}\}+j_*}
    \n{
    \Delta_{j'}
    \chi_{m,j,k,\ell}*f_k
    }_{L^p}
    \\
    &\leq 
    C|b_\ell|
    \sum_{j'\geq \max\{j,j_{\ell}\}+j_*}
    2^{-Mj'}
    \n{
    \chi_{m,j,k,\ell}*\nabla^Mf_k
    }_{L^p}
    \\
    &=
    C|b_\ell|\sp{\max\{2^j,\lambda^\ell\}}^{-M}
    \n{\chi_{m,j,k,\ell}*\nabla^Mf_k}_{L^p},
\end{align}
where we have put $j_* = \log_2(2c_*)$. 
Thus, we obtain
\begin{align}
    \n{F_{\lambda,L}^{(\r)}}_{\dB_{p,1}^s}
    \leq {}&
    C
    \sum_{k=1}^K
    \sum_{\ell=1}^L
    \sum_{(m,j) \in R_{k,\ell}}
    |b_\ell|
    2^{sj}
    \sp{\max\{2^j,\lambda^\ell\}}^{-M}
    \n{\chi_{m,j,k,\ell}*\nabla^Mf_k}_{L^p}\\
    \leq {}&
    C
    \sum_{k=1}^K
    \sum_{\ell=1}^L
    \sum_{m=0}^K
    \sum_{j \geq j_{\ell}+1}
    |b_\ell|
    2^{(s-M)j}
    \n{\chi_{m,j,k,\ell}}_{L^1}
    \n{\nabla^Mf_k}_{L^p}
    \\
    & + 
    C
    \sum_{k=1}^K
    \sum_{\ell=1}^L
    \sum_{m=0}^K
    \sum_{j \leq j_{\ell}}
    |b_\ell|
    2^{sj}
    (\lambda^{-M\ell}
    \n{\chi_{m,j,k,\ell}}_{L^p}
    \n{\nabla^Mf_k}_{L^1}\\
    \leq {}&
    C
    \sum_{k=1}^K
    \sum_{\ell=1}^L
    \sum_{j \geq j_{\ell}+1}
    |b_\ell|
    2^{(s-M)j}
    \n{\nabla^Mf_k}_{L^p}
    \\
    & + 
    C
    \sum_{k=1}^K
    \sum_{\ell=1}^L
    \sum_{j \leq j_{\ell}}
    |b_\ell|
    2^{\{s+n(1-\frac{1}{p})\}j}
    \lambda^{-M\ell}
    \n{\nabla^Mf_k}_{L^1}\\
    \leq {}&
    C
    \sum_{k=1}^K
    \sum_{\ell=1}^L
    \lambda^{(s-M)\ell}
    |b_{\ell}|
    \n{\nabla^Mf_k}_{L^p}
    \\
    &
    +
    C
    \sum_{k=1}^K
    \sum_{\ell=1}^L
    \lambda^{(s+n(1-\frac{1}{p})-M)\ell}
    |b_{\ell}|
    \n{\nabla^Mf_k}_{L^1}.
    \label{est:up-Fr}
\end{align}
Combining \eqref{est:up-Fp} and \eqref{est:up-Fr}, we obtain the desired estimate.
%

\noindent
{\it Step 2. The lower bound estimate.}
%
Since $P_0,P_1,\dots,P_K$ are bounded in homogeneous Besov spaces, there holds
\begin{align}
    \n{F_{\lambda,L}}_{\dB_{p,q}^s}
    \geq 
    c
    \max_{1 \leq m \leq K}
    \n{P_mF_{\lambda,L}}_{\dB_{p,q}^s}.
\end{align}
Fix a $m \in \{1,\dots,K\}$ and
decompose $P_mF_{\lambda,L}$ as
\begin{align}
    &P_mF_{\lambda,L}
    =
    F^{(\p)}_{\lambda,m,L}
    +
    F^{(\r)}_{\lambda,m,L};
    \\
    &
    \quad
    F^{(\p)}_{\lambda,m,L}
    :=
    \sum_{\ell=1}^L
    \sum_{|j-j_{\ell}|\leq 1}
    P_m\Delta_jg_{\lambda,m,\ell},
    \\
    &
    \quad
    F^{(\r)}_{\lambda,m,L}
    :=
    \sum_{k=1}^K
    \sum_{\ell=1}^L
    \sum_{j:(m,j) \in R_{k,\ell}}
    P_m\Delta_jg_{\lambda,k,\ell}.
\end{align}
For the lower bound estimate of the principal part $F^{(\p)}_{\lambda,m,L}$, we see that 
\begin{align}
    \n{F_{\lambda,m,L}^{(\p)}}_{\dB_{p,q}^s}
    &
    =
    \Mp{
    \sum_{j' \in \mathbb{Z}}
    \sp{
    2^{sj'}
    \n{
    \sum_{\ell=1}^L
    \sum_{|j-j_{\ell}|\leq 1}
    P_m\Delta_j\Delta_{j'}g_{\lambda,m,\ell}
    }_{L^p}
    }^q
    }^{\frac{1}{q}}
    \\
    &
    =
    \Mp{
    \sum_{\ell=1}^L
    \sum_{|j'-j_{\ell}|\leq 2}
    \sp{
    2^{sj'}
    \n{
    \sum_{|j-j_{\ell}|\leq 1}
    P_m\Delta_j\Delta_{j'}g_{\lambda,m,\ell}
    }_{L^p}
    }^q
    }^{\frac{1}{q}}
    \\
    &
    \geq 
    \Mp{
    \sum_{\ell=1}^L
    \sp{
    2^{sj_{\ell}}
    \n{
    \sum_{|j-j_{\ell}|\leq 1}
    P_m\Delta_j\Delta_{j_{\ell}}g_{\lambda,m,\ell}
    }_{L^p}
    }^q
    }^{\frac{1}{q}}
    \\
    &
    =
    \Mp{
    \sum_{\ell=1}^L
    \sp{
    2^{sj_{\ell}}
    \n{
    P_m\Delta_{j_{\ell}}g_{\lambda,m,\ell}
    }_{L^p}
    }^q
    }^{\frac{1}{q}}.
    \label{est:low-Fp}
\end{align}
Here, there holds
\begin{align}
    P_m\Delta_{j_{\ell}}g_{\lambda,m,\ell}(x)
    &=
    b_{\ell}
    \int_{\R^n}
    \chi_{m,j_{\ell}}(y)f_m(x-y)e^{i\lambda^\ell a_m\cdot(x-y)}dy
    \\
    &
    =
    b_{\ell}
    e^{i\lambda^\ell a_m\cdot x}
    f_m(x)
    \int_{\R^n}
    \chi_{m,j_{\ell}}(y)e^{-i\lambda^\ell a_m\cdot y}
    dy\\
    &\qquad
    +
    b_{\ell}
    e^{i\lambda^\ell a_m\cdot x}
    \int_{\R^n}
    (f_m(x-y)-f_m(x))\chi_{m,j_{\ell}}(y)e^{-i\lambda^\ell a_m\cdot y}
    dy\\
    &
    =:
    b_{\ell}
    e^{i\lambda^\ell a_m\cdot x}
    f_m(x)
    +
    R_{\lambda,m,\ell}(x),
\end{align}
where we have used $\widehat{\chi_{m,j_{\ell}}}(\lambda^\ell a_m)=1$.
The remainder term $R_{\lambda,m,\ell}$ is bounded as
\begin{align}
    \n{R_{\lambda,m,\ell}}_{L^p}
    &\leq 
    |b_{\ell}|\n{\nabla f_m}_{L^p}
    \int_{\R^n} |y||\chi_{m,j_{\ell}}(y)|dy
    \\
    &\leq 
    C|b_{\ell}|2^{-j_{\ell}}\n{\nabla f_m}_{L^p}
    \\
    &= 
    C|b_{\ell}|\lambda^{-\ell}\n{\nabla f_m}_{L^p}.
\end{align}
Thus, we see that 
\begin{align}\label{est:low-g}
    \n{P_m\Delta_{j_{\ell}}g_{\lambda,m,\ell}}_{L^p}
    &
    \geq 
    |b_{\ell}|
    \n{f_m}_{L^p}
    -
    C\lambda^{-\ell}
    |b_{\ell}|
    \n{\nabla f_m}_{L^p}.
\end{align}
By \eqref{est:low-Fp} and \eqref{est:low-g}, it holds
\begin{align}
    \n{F_{\lambda,m,L}^{(\p)}}_{\dB_{p,q}^s}
    \geq{}&
    c
    \Mp{\sum_{\ell=1}^L\sp{\lambda^{s\ell}|b_{\ell}|}^q}^{\frac{1}{q}}
    \n{f_m}_{L^p}
    -
    C
    \sum_{\ell=1}^L
    \lambda^{(s-1)\ell}
    |b_{\ell}|
    \n{\nabla f_m}_{L^p}.
    \label{est:lw-Fp}
\end{align}
On the estimate of $F^{(\r)}_{\lambda,m,L}$,
the similar strategy as for the estimate of $F^{(\r)}_{\lambda,m,L}$ yields
\begin{align}
    \n{F^{(\r)}_{\lambda,m,L}}_{\dB_{p,1}^s}
    \leq{}& 
    C
    \sum_{k=1}^K
    \sum_{\ell=1}^L
    \sum_{j:(m,j) \in R_{k,\ell}}
    \sum_{|j'-j|\leq1}
    2^{sj'}
    \n{P_m\Delta_{j'}\Delta_jg_{\lambda,k,\ell}}_{L^p}\\
    \leq{}& 
    C
    \sum_{k=1}^K
    \sum_{\ell=1}^L
    \sum_{j:(m,j) \in R_{k,\ell}}
    2^{sj}
    \n{P_m\Delta_jg_{\lambda,k,\ell}}_{L^p}\\
    \leq {}&
    C
    \sum_{k=1}^K
    \sum_{\ell=1}^L
    \lambda^{(s-M)\ell}
    |b_{\ell}|
    \n{\nabla^Mf_{k}}_{L^p}
    \\
    &
    +
    C
    \sum_{k=1}^K
    \sum_{\ell=1}^L
    \lambda^{(s+n(1-\frac{1}{p})-M)\ell}
    |b_{\ell}|
    \n{\nabla^Mf_{k}}_{L^p \cap L^1}.
    \label{est:lw-Fr}
\end{align}
Gathering \eqref{est:lw-Fp} and \eqref{est:lw-Fr}, we obtain \eqref{est:lower-F} and complete the proof.
\end{proof}

\section{Principal part of solutions and perturbed forces}\label{sec:V}
In this section, we construct the principal part of the non-unique steady flow outlined in Section~\ref{subsec:idea}, and prove that the perturbative forcing term admits better regularity and smallness than the principal part. 
The central aim of this section is to show the following theorem.
\begin{thm}\label{thm:p-sol}
    For $n \geq 3$, 
    there exists a constant $\lambda_{*0}>0$ such that for every $\lambda \in 2^{\N}$ with $\lambda \geq \lambda_{*0}$, 
    there exists a divergence-free vector field $V_\lambda \in \mathscr{S}'(\R^n)$
    such that the following {\rm (i)} and {\rm (ii)} hold true.
    \begin{enumerate}
        \item [\rm (i)]
        For any $(p,q)$ satisfying {\rm (N1)} or {\rm (N2)} in Theorem \ref{thm:main},
        $V_\lambda$ belongs to $\dB_{p,q}^{n/p-1}(\R^n)$ and it holds 
        \begin{align}\label{est:V-optimal}
            C^{-1}\lambda^{-\alpha(p)}
            \leq 
            \n{V_{\lambda}}_{\dB_{p,q}^{\frac{n}{p}-1}} 
            \leq 
            C\lambda^{-\alpha(p)}
        \end{align}
        with $\alpha(p):=(3-n/p)(4n+5)-4-n/p$ and some constant $C=C(n,p,q)>0$.
        \item [\rm (ii)] 
        For any $n/2< r<n$,
        the perturbative force 
        \begin{align}
            F_{\lambda}:=-\Delta V_{\lambda} + \mathbb{P}\div (V_{\lambda} \otimes V_{\lambda})
        \end{align}
        belongs to $\dB_{r,1}^{n/r-3}(\R^n)$ and satisfies
        \begin{align}\label{est:F}
            \n{F_{\lambda}}_{\dB_{r,1}^{\frac{n}{r}-3}}
            \leq C\lambda^{-\beta}
        \end{align}
        with $\beta:=12n+12$ and some constant $C=C(n,r)>0$.
    \end{enumerate}
\end{thm}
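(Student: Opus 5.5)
The plan is to build $V_\lambda=\sum_{j\ge0}V_j$ following the outline of Subsection~\ref{subsec:idea}. With $\lambda_0=\lambda$, $\lambda_j=\lambda^{2^{\lambda_{j-1}^{4\mu}}}$ and the index sets $\Lambda_j$, I would first fix a finite direction set $\mathcal{K}\subset\mathbb{Z}^n$. Together with vectors $a_k\in\mathbb{S}^{n-1}$ satisfying $a_k\cdot k=0$, this set should have the Nash geometric lemma property: every symmetric matrix $R$ close to $\Id$ can be written as $R=\sum_{k\in\mathcal{K}}\gamma_k(R)^2\,k\otimes k$ with smooth $\gamma_k$.

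The blocks are then taken in the schematic form
\begin{align}
V_j=\sum_{k\in\mathcal{K}}\sum_{\ell\in\Lambda_j}\frac{1}{\sqrt{h_j\ell}}\,\mathbb{P}\bigl(f_{k,j}\cos(\lambda^\ell a_k\cdot x)\,k\bigr),
\end{align}
where the projection (or an explicit curl corrector) enforces $\div V_j=0$ at the cost of an error of relative size $\lambda^{-\ell}$. The amplitudes $f_{k,j}\in C_c^\infty$ are chosen recursively so that the low-frequency part of $\div(V_j\otimes V_j)$ cancels $-\Delta V_{j-1}$.

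To see how this works, note that $\cos^2=\tfrac12(1+\cos 2\theta)$. Hence the resonant, zero-frequency part of $V_j\otimes V_j$ is
\begin{align}
\tfrac12\sum_k f_{k,j}^2\Bigl(\sum_{\ell\in\Lambda_j}\tfrac{1}{h_j\ell}\Bigr)k\otimes k=\tfrac12\sum_k f_{k,j}^2\,k\otimes k,
\end{align}
which is exactly why the normalization $h_j$ is used. All remaining products $\cos(\lambda^\ell a_k\cdot x)\cos(\lambda^{\ell'}a_{k'}\cdot x)$ with $(k,\ell)\neq(k',\ell')$ oscillate at frequency at least $c\lambda^{\max(\ell,\ell')}$, since the $a_k$ are distinct and $\lambda$ is lacunary. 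I want $\mathbb{P}\div\bigl(\tfrac12\sum_k f_{k,j}^2 k\otimes k\bigr)=\Delta V_{j-1}$ modulo small terms. To get this, write $\nabla V_{j-1}+(\nabla V_{j-1})^T$ (or an anti-divergence of $\Delta V_{j-1}$, which is a symmetric tensor) as $\rho_j(\Id-R_j)$, where $\rho_j\in C_c^\infty$ is a large cutoff and $R_j$ is small. The geometric lemma then gives $f_{k,j}=\sqrt{2\rho_j}\,\gamma_k(\Id-R_j)$. The superexponential growth of $\lambda_j$ makes $V_{j-1}$ essentially a low-frequency smooth object relative to the scales $\lambda^\ell$ with $\ell\in\Lambda_j$, since $\ell\ge\mu\log_\lambda\lambda_{j-1}$. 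Consequently all derivatives of $f_{k,j}$ are bounded by powers of $\lambda_{j-1}$, which are negligible against $\lambda^{\ell}$. The base block $V_0$ carries the prescribed amplitude $\lambda^{-\alpha(p)}$ size, with amplitude and support chosen so that the final exponent $\alpha(p)$ comes out.

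For (i), I would apply Proposition~\ref{prop:F-Besov} to each $V_j$ with $b_\ell=(h_j\ell)^{-1/2}$ and $s=n/p-1$. The condition $s>n/p-n$ holds since $n\ge 3$. The tail terms are killed by choosing $M$ large and using $\lambda^\ell\ge\lambda_{j-1}^\mu$. This gives
\begin{align}
\|V_j\|_{\dB_{p,q}^{n/p-1}}\sim \frac{\|f_{k,j}\|_{L^p}}{\sqrt{h_j}}\Bigl(\sum_{\ell\in\Lambda_j}\lambda^{(n/p-1)q\ell}\ell^{-q/2}\Bigr)^{1/q}.
\end{align}
I then check decay in $j$ case by case. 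For (N2) the factor $\lambda^{(n/p-1)\ell}$ already gives geometric smallness. For (N1) one has $h_j\sim\log(\log_\lambda\lambda_j/\log_\lambda\lambda_{j-1})$, which is huge, while $\bigl(\sum_{\ell}\ell^{-q/2}\bigr)^{1/q}$ stays bounded for $q>2$. So $\|V_j\|\lesssim h_j^{-1/2}\|f_{k,j}\|_{L^p}$, and this is summable because $\|f_{k,j}\|_{L^p}$ grows at most like a power of $\lambda_{j-1}$ (from the cutoff $\rho_j$) while $h_j\sim 2^{\lambda_{j-1}^{4\mu}}$-type growth dominates. Summing the blocks yields the upper bound in~\eqref{est:V-optimal}. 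The lower bound comes from the lower estimate~\eqref{est:lower-F} applied to $V_0$, together with frequency separation from the later $V_j$.

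For (ii), I would split $F_\lambda=F_1+F_2$ as in the introduction. The cross terms $F_2$ involve only high frequencies, at least $c\lambda^{\min\Lambda_{\max(j_1,j_2)}}$. For these I would use Lemma~\ref{lemm:nonlin-sta}-type product bounds, or Proposition~\ref{prop:F-Besov} with $s=n/r-3<0$, and gain a power $\lambda^{-2\ell}$ from the negative regularity. In $F_1$, the resonant part cancels $-\Delta V_{j-1}$ up to $\rho_j R_j$-type errors and the divergence-projection correctors. The nonresonant self-interactions of $V_j$ sit at frequency at least $c\lambda^\ell$, and measured in $\dB_{r,1}^{n/r-3}$ they gain $\lambda^{(n/r-2)\ell}$ with $n/r-2<0$, which is where $r>n/2$ is used. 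Finally, $-\Delta V_{\rm last}$ is handled by the telescoping structure: the sum is infinite and each term is consumed by the next block.

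I expect the main obstacle to be the quantitative bookkeeping in the cancellation step. One has to make $R_j$ small while keeping $\rho_j$ of controlled $L^p$ size, and also handle the $\ell^1$-summation in $\dB_{r,1}$ of the near-diagonal interactions $\ell\neq\ell'$ at comparable scales, where $|\lambda^\ell a_k\pm\lambda^{\ell'}a_{k'}|$ could be small. That forces the $a_k$ to be distinct and not antipodal, and requires verifying that the $\ell^1$ sum of $\lambda^{(n/r-2)\ell}/(h_j\sqrt{\ell\ell'})$ still yields the exponent $\beta$.
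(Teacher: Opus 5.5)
Your architecture is the paper's own (it is essentially the outline of Subsection~\ref{subsec:idea}). The proposal does not, however, establish the statement, whose content lies in the explicit exponents, and the one quantitative comparison you do make is miscomputed.

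Since $\log_\lambda\lambda_j=2^{\lambda_{j-1}^{4\mu}}$ and $h_j\sim\log_2(\log_\lambda\lambda_j/\log_\lambda\lambda_{j-1})$, one has $h_j\sim\lambda_{j-1}^{4\mu}$, not $2^{\lambda_{j-1}^{4\mu}}$. The normalization therefore gains only the polynomial factor $h_j^{-1/2}\sim\lambda_{j-1}^{-2\mu}$, and the proof consists in making this beat several polynomial losses. In the paper the Nash threshold is $\lambda_{j-1}^{8}$, because the mollifier only gives $\n{\mathcal{D}V_{j-1}}_{L^\infty}\lesssim\lambda_{j-1}^{6}$. Hence $\n{V_j^{(\p)}}_{\dB^{s}_{p,q}}\sim\lambda^{\frac{n}{p}j}\lambda_{j-1}^{4-2\mu}\sigma_j(s,q)$. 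The Fourier tails in Proposition~\ref{prop:F-Besov} cost $\lambda^{nj}\lambda_{j-1}^{4n}$ against $\lambda_{j-1}^{-n\mu}$, and the nonresonant self-interactions $F_{1,i;j}$ are only $O(\lambda_{j-1}^{4n+8-4\mu})$ in $\dB^{n/r-2}_{r,1}$. This gives $\n{F_\lambda}_{\dB^{n/r-3}_{r,1}}\lesssim\lambda^{4n+8-4\mu}$ and $\n{V_\lambda}_{\dB^{n/p-1}_{p,q}}\sim\lambda^{\frac{n}{p}+4-(3-\frac{n}{p})\mu}$. The hierarchy $\alpha(p)<\beta$ at $p=\infty$ reads $3\mu-4<4\mu-4n-8$, which forces $\mu>4n+4$; hence the choice $\mu=4n+5$, which yields exactly $\alpha(p)$ and $\beta=12n+12$. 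With your superexponential $h_j$ this constraint is invisible and any $\mu$ would seem to work.

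Likewise, no single field can ``carry the prescribed amplitude $\lambda^{-\alpha(p)}$'' for all $p$ at once. In the paper $V_0=0$, and $\lambda^{-\alpha(p)}$ is an output of $V_1$. That block has amplitude $\lambda_0^4h_1^{-1/2}\sim\lambda^{4-2\mu}$, support of radius $\lambda$, and lowest frequency $\lambda^{\mu}$, so $\sigma_1(\frac{n}{p}-1,q)\sim\lambda^{(\frac{n}{p}-1)\mu}$. Meanwhile $\sum_{j\geq2}V_j^{(\p)}$ and $V^{(\r)}$ are of lower order. The lower bound is then a triangle inequality; exact frequency separation is not available for compactly supported amplitudes.

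Two structural points must also be corrected.

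First, enforcing $\div V_j=0$ by $\mathbb{P}$ does not work. $\mathbb{P}$ is nonlocal, so $V_{j-1}$ would not be compactly supported. Then $\rho_j$ could not be a compactly supported cutoff keeping $\rho_j^{-1}\mathcal{D}V_{j-1}$ in the Nash neighbourhood everywhere, and you lose the cutoff structure behind the $L^p$ and $L^1$ bounds on the amplitudes used in Proposition~\ref{prop:F-Besov}. The paper instead applies the local operator $\div\mathcal{D}=-\Delta\mathbb{P}$ to $\Gamma_{j,k}\Psi_{j,k}k$, with the factor $\lambda^{-2\ell}$ built into $\Psi_{j,k}$ and $a_k\cdot k=0$. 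This keeps $V_j$ compactly supported (so $\psi_j\equiv1$ on $\supp V_{j-1}$) and makes $V_j^{(\p)}$ explicit. It also places all correctors in $V^{(\r)}$, which is one derivative better and satisfies $\n{V^{(\r)}}_{\dB^{n/r-1}_{r,1}}\lesssim\lambda^{n+6-(4-\frac{n}{r})\mu}$. $V^{(\r)}$ enters only through $F_3$ via Lemma~\ref{lemm:nonlin-sta}, a term your sketch does not estimate.

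Second, the cancellation is exact, not ``up to $\rho_jR_j$-type errors''. In your notation, $\rho_jR_j$ is, up to sign, $\mathcal{D}V_{j-1}$ itself, i.e.\ the very term that produces the cancellation. Because $\psi_j\equiv1$ on $\supp V_{j-1}$, the resonant part of $V_j^{(\p)}\otimes V_j^{(\p)}$ equals $\lambda_{j-1}^8\psi_j^2\Id-\mathcal{D}V_{j-1}$ identically, and its $\mathbb{P}\div$ is exactly $\Delta V_{j-1}$. Since this resonant part has size $\lambda_{j-1}^8$, any non-gradient residual at that level would be fatal.
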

\begin{rem}
    Since $\alpha(p) \leq \alpha(\infty) = 12 n+11 < \beta$ and $\dB_{r,1}^{n/r-1}(\R^n) \hookrightarrow \dB_{p,q}^{n/p-1}(\R^n)$, 
    $F_{\lambda}$ is much smaller and more regular than $V_{\lambda}$.
\end{rem}
\subsection{Notations and key lemmas}
Before proving Theorem~\ref{thm:p-sol}, we collect several auxiliary results and notations.
We begin by recalling the well-known geometric lemma.
\begin{lemm}[\cite{Nash-54}]\label{lemm:Nash}
Let $n \geq 2$.
There exist a positive constant $r_0$, a bounded closed interval $\mathcal{I} \subset (0,\infty)$, a finite subset $\mathcal{K}$ of $\Z^n \setminus \{0\}$, and $C^{\infty}$ functions $\Gamma_k:B_{\mathcal{S}_n}(\Id,r_0) \to \mathcal{I}$ $(k \in \mathcal{K})$ 
such that
\begin{align}\label{Nash}
    M = \sum_{k \in \mathcal{K}} \Gamma_k(M)^2 k \otimes k,
    \qquad
    M \in B_{\mathcal{S}_n}(\Id,r_0).
\end{align}
Here, $\mathcal{S}_n$ stands for the set of all real symmetric $n \times n$ matrices.
\end{lemm}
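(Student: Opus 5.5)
The plan is to reduce the lemma to linear algebra. First find a finite set $\mathcal{K}\subset\Z^n\setminus\{0\}$ such that the rank-one matrices $\{k\otimes k\}_{k\in\mathcal{K}}$ span $\mathcal{S}_n$ and $\Id$ is a combination of them with \emph{strictly positive} coefficients. Then perturb these coefficients linearly in $M$ and take square roots.

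\textbf{Step 1: the set $\mathcal{K}$ and a positive representation of $\Id$.} Take
\begin{align}
    \mathcal{K}:=\{e_i\ ;\ 1\leq i\leq n\}\cup\{e_i+e_j,\ e_i-e_j\ ;\ 1\leq i<j\leq n\}\subset \Z^n\setminus\{0\}.
\end{align}
For $i\neq j$ the cross terms cancel:
\begin{align}
    (e_i+e_j)\otimes(e_i+e_j)+(e_i-e_j)\otimes(e_i-e_j)=2\,e_i\otimes e_i+2\,e_j\otimes e_j.
\end{align}
Summing over $i<j$, each $e_i\otimes e_i$ is counted $n-1$ times, so
\begin{align}
    \Id=\sum_{i}\tfrac12\, e_i\otimes e_i+\sum_{i<j}\tfrac{1}{4(n-1)}\big((e_i+e_j)\otimes(e_i+e_j)+(e_i-e_j)\otimes(e_i-e_j)\big).
\end{align}
This gives $\Id=\sum_{k\in\mathcal{K}}c_k\,k\otimes k$ with all $c_k>0$. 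Here $n\geq 2$ is used so that the set of pairs is nonempty.

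\textbf{Step 2: a linear right inverse.} Let $\mathcal{B}:=\{e_i\}\cup\{e_i+e_j\}_{i<j}\subset\mathcal{K}$, which has $n(n+1)/2=\dim\mathcal{S}_n$ elements. The matrices $\{k\otimes k\}_{k\in\mathcal{B}}$ form a basis of $\mathcal{S}_n$, because
\begin{align}
    e_i\otimes e_j+e_j\otimes e_i=(e_i+e_j)\otimes(e_i+e_j)-e_i\otimes e_i-e_j\otimes e_j.
\end{align}
Hence there are linear functionals $L_k:\mathcal{S}_n\to\R$, $k\in\mathcal{B}$, with $A=\sum_{k\in\mathcal{B}}L_k(A)\,k\otimes k$ for all $A\in\mathcal{S}_n$. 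Set $L_k:=0$ for $k\in\mathcal{K}\setminus\mathcal{B}$.

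\textbf{Step 3: definition of $\Gamma_k$.} For $M\in\mathcal{S}_n$, define $\gamma_k(M):=c_k+L_k(M-\Id)$. Then
\begin{align}
    \sum_{k\in\mathcal{K}}\gamma_k(M)\,k\otimes k=\Id+(M-\Id)=M.
\end{align}
The $L_k$ are bounded, so $|L_k(A)|\leq C_0|A|$. Choose $r_0>0$ with $C_0r_0\leq \tfrac12\min_k c_k$. Then on $B_{\mathcal{S}_n}(\Id,r_0)$,
\begin{align}
    \tfrac12\min_k c_k\leq\gamma_k(M)\leq 2\max_k c_k .
\end{align}
Put $\Gamma_k:=\sqrt{\gamma_k}$. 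Each $\Gamma_k$ is $C^\infty$, as the square root of an affine function bounded away from zero. It takes values in the compact interval
\begin{align}
    \mathcal{I}:=\big[(\tfrac12\min_k c_k)^{1/2},\,(2\max_k c_k)^{1/2}\big]\subset(0,\infty),
\end{align}
and it satisfies \eqref{Nash}.

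\textbf{Main obstacle.} The only step needing care is Step 1, namely finding a representation of $\Id$ with strictly positive weights while keeping $\mathcal{K}$ integral. The $\pm$ pairing, in which the off-diagonal terms cancel, resolves it. Everything else is a finite-dimensional perturbation argument.
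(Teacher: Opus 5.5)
Your proposal is correct and follows essentially the same route as the paper. Both use the same set $\mathcal{K}=\{e_i\}\cup\{e_i\pm e_j\}_{i<j}$ and the same positive decomposition of $\Id$ with weights $\frac12$ and $\frac{1}{4(n-1)}$, followed by an affine perturbation in $M$ and a square root. The only difference is cosmetic: the paper writes the perturbation explicitly, splitting $M_{ij}$ as $\pm\frac{M_{ij}}{2}$ between $e_i\pm e_j$, which gives $\Gamma_{e_i}(M)=(M_{ii}-\frac12)^{1/2}$ and $r_0=(4(n-1))^{-1}$, whereas your abstract basis inverse loads the whole off-diagonal entry onto $e_i+e_j$ and compensates in the coefficient of $e_i$.
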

\begin{proof}
    For the readers' convenience, we provide an outline of an elementary proof.
    Let us define $\mathcal{K}$ by 
    \begin{align}
        \mathcal{K}
        :=
        \Mp{
        \bb{e}_i\ ;\ i=1,\dots,n
        }
        &
        \cup 
        \Mp{
        \bb{e}_i+\bb{e}_j
        \ ;\ i,j=1,\dots,n,\ i < j
        }
        \\
        &
        \cup 
        \Mp{
        \bb{e}_i-\bb{e}_j
        \ ;\ i,j=1,\dots,n,\ i < j
        },
    \end{align}
    where $\bb{e}_1,\dots,\bb{e}_n \in \R^n$ stands for the $i$-th standard basis vector.
    We define $\{\Gamma_k\}_{k \in \mathcal{K}}$ by
    \begin{align}
        \Gamma_k(M)
        :=
        \begin{cases}
            \sp{M_{ii}-\dfrac{1}{2}}^{\frac{1}{2}}, & (k=\bb{e}_i,\ i=1,\dots,n),
            \\[7pt]
            \sp{\dfrac{1}{4(n-1)}
            \pm 
            \dfrac{M_{ij}}{2}
            }^{\frac{1}{2}},  & (k=\bb{e}_i\pm \bb{e}_j,\ i,j=1,\dots,n,\ i < j),
        \end{cases}
    \end{align}
    for all $M=(M_{ij}) \in \mathcal{S}_n$ with $|M-\Id| \leq r_0:=(4(n-1))^{-1}$.
    Then, we may easily check that \eqref{Nash} holds true, and complete the proof.
\end{proof}
Here, we introduce some notations that are to be used throughout this section.
\begin{enumerate}[label=\arabic*.]
\item 
Let $\{ a_k \}_{k \in \mathcal{K}} \subset \mathbb{S}^{n-1}$ satisfy that $a_k$ and $a_{k'}$ are not parallel unless $k = k'$.
For instance, we may choose
\begin{align}
    a_k
    :=
    \begin{cases}
        \bb{e}_{i+1}, & (k=\bb{e}_i,\ i=1,\dots,n-1), \\
        \bb{e}_1, & (k=\bb{e}_n), \\
        \dfrac{\bb{e}_i \mp \bb{e}_j}{\sqrt{2}}, & (k=\bb{e}_i \pm \bb{e}_j,\ i,j=1,\dots,n,\ i < j).
    \end{cases}
\end{align}
\item 
Let $\mu=4n+5$.
For $\lambda\in 2^{\mathbb{N}}$, we define $\{\lambda_j\}_{j\in\N_0}$ inductively by
\begin{equation}\label{eq:def-lambdaj}
  \lambda_0=\lambda,
  \qquad
  \lambda_j=\exp_{\lambda}\!\left(\exp_2\!\left(\lambda_{j-1}^{4\mu}\right)\right),
\end{equation}
where we have used the notation $\exp_a(b)\coloneqq a^b$ for $a>0$ and $b\in\mathbb{R}$.

Throughout this section, $\lambda$ may be replaced by a larger dyadic number whenever necessary, without further comment. We emphasize that the required lower bound on $\lambda$ is chosen independently of $j$. 
Moreover, almost all quantities introduced below depend on the parameter $\lambda$; however, we do not record this dependence explicitly in their notations so as to avoid unnecessarily complicating them.
We also note that the generic constants are always independent of $\lambda$ and $j$.
\item 
For $j \in \N$, set 
\begin{align}
    \Lambda_j:=
    \Mp{\ell \in \N\ ;\ \mu \log_{\lambda}\lambda_{j-1} \leq \ell \leq \log_{\lambda} \lambda_j}.
\end{align}
In what follows, we choose $\lambda$ sufficiently large so that $\Lambda_j \neq \varnothing$ for all $j \in \N$.
\item 
We define $\{h_j\}_{j \in \N}$ by
\begin{align}
    h_j:=
    \sum_{\ell \in \Lambda_j}
    \frac{1}{\ell}.
\end{align}
Note that there holds
\begin{align}
    h_j 
    \sim{}& 
    \log_2 
    \frac{\log_\lambda\lambda_j}{\mu\log_\lambda\lambda_{j-1}}
    \\
    ={}&
    \begin{cases}
        \lambda_{j-1}^{4\mu}-\log_2\mu & (j=1), \\
        \lambda_{j-1}^{4\mu}-\lambda_{j-2}^{4\mu}-\log_2\mu, \quad & (j =2,3,4,\dots),
    \end{cases}
\end{align}
which yields $h_j \sim \lambda_{j-1}^{4\mu}$.
\item 
We choose a function $\rho \in C_c^{\infty}(\mathbb{R}^n;[0,\infty))$ so that
\begin{align}
    \supp \rho = B_{\R^n}(0,1), 
    \qquad
    \int_{\R^n} \rho(y) dy = 1.
\end{align}
Let $\rho_{j}$ be the mollifier of the scale $\lambda_j^{-2}$; more precisely, we define
\begin{align}
    \rho_{j}(x):=\lambda_j^{2n}\rho\sp{\lambda_j^2x}.
\end{align}
\item 
Using a function $\psi \in C^{\infty}(\R)$ satisfying $\bb{1}_{(-\infty,0]} \leq \psi \leq \bb{1}_{(-\infty,1)}$, we define a sequence $\{ \psi_{j} \}_{j \in \N_0} \subset C_c^{\infty}(\R^n)$ by
\begin{align}
    \psi_{j}(x)
    :=
    \psi(|x|-\lambda^j).
\end{align}
\item 
Let us define the differential operator $\mathcal{D}$ by
\begin{align}
    \mathcal{D}v:=-\nabla v - (\nabla v)^{\top} + 2 (\div v) \Id 
\end{align}
for all smooth vector field $v:\R^n \to \R^n$.
Note that $\mathcal{D}v$ is a real symmetric matrix and it holds
\begin{align}
    \div \mathcal{D}v = -\Delta v + \nabla \div v = -\Delta \mathbb{P} v.
\end{align}
\end{enumerate}
In order to define the building blocks of the desired $V_{\lambda}$, we introduce the following operators.
\begin{df}
    For $j \in \N$ and $v \in C_c^{\infty}(\R^n)$ satisfying $\n{\mathcal{D}v}_{L^\infty} \leq r_0 \lambda_{j-1}^{8}$, we define 
    \begin{align}
        &
        {\mathcal{V}}_j[v]
        :=
        \sqrt{2}
        \lambda_{j-1}^4
        \rho_j*
        \sum_{k \in \mathcal{K}}
        \div \mathcal{D}
        \sp{{\Gamma}_{j,k}[v]
        {\Psi}_{j,k}
        k},
    \end{align}
    where we have set 
    \begin{align}
        &{\Gamma}_{j,k}[v](x):=\Gamma_k\sp{\Id-\lambda_{j-1}^{-8}\mathcal{D}v(x)},
        \\
        &
        {\Psi}_{j,k}(x)
        :=
        \sum_{\ell\in \Lambda_j}
        \frac{1}{\lambda^{2\ell}\sqrt{h_j\ell}}
        \psi_{j}(x)
        \cos \sp{\lambda^{\ell}
        a_k \cdot x}.
    \end{align}
\end{df}
\begin{rem}\label{rem:V}
    When $\R^n$ is replaced by $\T^n$, 
    it is not necessary to multiply by $\psi_j$ in the definition of $\Psi_{j,k}$.
\end{rem}
Let us establish elementary derivative estimates for each $\mathcal{V}_j[v]$, which rely on the mollifier.
Although these bounds are far from optimal, they ensure the well-definedness of Definition~\ref{df:V_j} below,
and they provide an essential foundation for obtaining the sharp estimates later on.
\begin{lemm}\label{lemm:V[v]}
    For every $m \in \N_0$, 
    there exists a positive constant $C=C(m)$ such that 
    \begin{align}\label{est:V[v]-1}
        \n{\nabla^m{\mathcal{V}}_j[v]}_{L^{p}}
        \leq 
        C
        \lambda_j^{2m+4}.
    \end{align}
    for all $j \in \N$,  $1 \leq p \leq \infty$, and $v \in C_c^{\infty}(\R^n)$ with $\n{\mathcal{D}v}_{L^\infty} \leq r_0 \lambda_{j-1}^{8}$.
    Moreover, it holds
    \begin{align}\label{est:V[v]-2}
        \n{\mathcal{D}{\mathcal{V}}_j[v]}_{L^{\infty}}
        \leq 
        r_0\lambda_{j}^{8}.
    \end{align}
\end{lemm}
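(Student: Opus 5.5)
The plan is to use the mollifier to absorb every derivative. Young's inequality converts derivatives falling on $\rho_j$ into powers of $\lambda_j^2$, so that only $L^p$ or $L^1$ norms of the undifferentiated (or once-differentiated) inner field are needed.

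\textbf{Reduction to the inner field.} Write
\begin{align}
    \mathcal{V}_j[v]=\sqrt{2}\lambda_{j-1}^4\,\div\mathcal{D}\,\bigl(\rho_j*G\bigr),
    \qquad
    G:=\sum_{k\in\mathcal{K}}\Gamma_{j,k}[v]\Psi_{j,k}k .
\end{align}
Put all $m+2$ derivatives on the mollifier. Since $\n{\nabla^{m+2}\rho_j}_{L^1}=\lambda_j^{2(m+2)}\n{\nabla^{m+2}\rho}_{L^1}$, Young's inequality gives
\begin{align}
    \n{\nabla^m\mathcal{V}_j[v]}_{L^p}\leq C\lambda_{j-1}^4\lambda_j^{2m+4}\n{G}_{L^p}.
\end{align}

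\textbf{Bounding $G$.} The hypothesis $\n{\mathcal{D}v}_{L^\infty}\leq r_0\lambda_{j-1}^8$ places $\Id-\lambda_{j-1}^{-8}\mathcal{D}v(x)$ in $B_{\mathcal{S}_n}(\Id,r_0)$, so Lemma~\ref{lemm:Nash} applies and $\Gamma_{j,k}[v]$ takes values in the bounded interval $\mathcal{I}$. Moreover $\supp\psi_j\subset B(0,\lambda^j+1)$, and the coefficients of $\Psi_{j,k}$ satisfy $\sum_{\ell}\lambda^{-2\ell}(h_j\ell)^{-1/2}\leq C\lambda^{-2\min\Lambda_j}$. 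Hence
\begin{align}
    \n{G}_{L^p}\leq C(\lambda^j+1)^{n/p}\,\lambda^{-2\mu\log_\lambda\lambda_{j-1}}
    = C(\lambda^j+1)^{n/p}\lambda_{j-1}^{-2\mu}.
\end{align}
Because $\mu=4n+5$, the factor $\lambda_{j-1}^{-2\mu}$ beats both $\lambda_{j-1}^4$ and $(\lambda^j)^{n}\leq\lambda_{j-1}^{n}$. The latter inequality holds since the tower $\lambda_{j-1}$ dominates $\lambda^j$ for $\lambda$ large; for $j=1$ one checks directly that $\lambda^{n}\lambda^{4}\lambda^{-2\mu}\leq1$. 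This proves \eqref{est:V[v]-1} with a constant independent of $j$ and $p$. The $L^p$ case for finite $p$ also follows by interpolating between $L^1$ and $L^\infty$.

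\textbf{The bound \eqref{est:V[v]-2}.} The operator $\mathcal{D}$ costs one derivative, so the same computation with $m+1=3$ derivatives on $\rho_j$ gives
\begin{align}
    \n{\mathcal{D}\mathcal{V}_j[v]}_{L^\infty}\leq C\lambda_{j-1}^{4-2\mu}\lambda_j^{6}.
\end{align}
This is at most $r_0\lambda_j^8$ once $C\lambda_{j-1}^{4-2\mu}\leq r_0\lambda_j^{2}$, which holds for $\lambda$ large uniformly in $j$.

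The main thing to check is that no derivatives ever fall on $\Gamma_{j,k}[v]$: its smoothness depends on $v$ and is uncontrolled. This is exactly why every derivative is moved onto $\rho_j$, at the crude cost of $\lambda_j^{2}$ each. Beyond that, the only care needed is in tracking the spatial support volume of $\psi_j$ against the decay $\lambda_{j-1}^{-2\mu}$ uniformly in $j$.
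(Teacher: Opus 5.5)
Your proposal is correct and follows essentially the same route as the paper. In both, all $m+2$ derivatives go onto $\rho_j$ via Young's inequality, $\Gamma_{j,k}[v]$ is bounded through the range $\mathcal{I}$ from Lemma~\ref{lemm:Nash}, the support volume of $\psi_j$ is absorbed by the decay of the coefficients of $\Psi_{j,k}$, and \eqref{est:V[v]-2} follows from the case $m=1$, $p=\infty$. The only difference is that you discard the factor $h_j^{-1/2}\sim\lambda_{j-1}^{-2\mu}$, whereas the paper keeps it to get $\lambda_{j-1}^{-4\mu}$; this is harmless, since $\mu=4n+5$ still gives $\lambda_{j-1}^{4+n-2\mu}\leq 1$ uniformly in $j$.
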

\begin{proof}
    From the Young inequality, we see that 
    \begin{align}
        \n{\nabla^m{\mathcal{V}}_j[v]}_{L^{p}}
        &
        \leq 
        C
        \lambda_{j-1}^4
        \n{\nabla^{m+2}\rho_j}_{L^1}
        \sum_{k \in \mathcal{K}}
        \n{\Gamma_{j,k}[v]\Psi_{j,k}}_{L^p}
        \\
        &
        \leq 
        C
        \lambda_{j-1}^4
        \lambda_j^{2m+4}
        \sum_{k \in \mathcal{K}}
        \n{\Psi_{j,k}}_{L^p}.
    \end{align}
    Since $\Psi_{j,k}$ is bounded as
    \begin{align}
        \n{\Psi_{j,k}}_{L^p}
        \leq 
        \sum_{\ell \in \Lambda_j}
        \frac{C\lambda^{\frac{n}{p}j}}{\lambda^{2\ell}\sqrt{h_j\ell}}
        \leq 
        \frac{C\lambda^{\frac{n}{p}j}}{\lambda^{4\mu}_{j-1}\sqrt{\mu\log_{\lambda}\lambda_{j-1}}}
        \leq 
        C
        \lambda^{n-4\mu}_{j-1}
    \end{align}
    we obtain \eqref{est:V[v]-1}.
    For the estimate of \eqref{est:V[v]-2}, \eqref{est:V[v]-1} with $p=\infty$ and $m=1$ yields
    \begin{align}
        \n{\mathcal{D}\mathcal{V}_j[v]}_{L^\infty}
        \leq 
        C\n{\nabla\mathcal{V}_j[v]}_{L^\infty}
        \leq 
        C\lambda_j^6
        \leq 
        r_0\lambda_j^{8}
    \end{align}
    provided that $\lambda$ is sufficiently large.
    Thus, we complete the proof.
\end{proof}
We now define the building blocks for the velocity field required in Theorem~\ref{thm:p-sol}.
\begin{df}\label{df:V_j}
    Let us define a sequence $\{{V}_j\}_{j \in \N_0} \subset C_c^{\infty}(\R^n)$ iteratively by 
    \begin{align}
        \begin{cases}
            {V}_0:=0, & (j=0), \\
            {V}_j:={\mathcal{V}}_j[{V}_{j-1}], \quad & (j = 1,2,3,\dots).
        \end{cases}
    \end{align}
\end{df}
\begin{rem}
    Some remarks are in order.
    \begin{enumerate}[label=\arabic*.]
        \item 
        Note that \eqref{est:V[v]-2} guarantees that $V_{j-1}$ belongs to the domain of the operator $\mathcal{V}_j$, and hence the sequence $\{V_j\}_{j\in\N_0}$ is well-defined.
        \item 
        By $\div \mathcal{D}=-\Delta \mathbb{P}$ in the definition of $\mathcal{V}_j$, we see that each $V_j$ is solenoidal.
        \item 
        It follows from 
        \begin{align}
        \supp V_{j-1} 
        &
        \subset \supp \psi_{j-1} + B_{\R^n}(0,\lambda_{j-1}^{-2}) \\
        &
        \subset B_{\R^n}(0,\lambda^{j-1}+1+\lambda_{j-1}^{-2}) 
        \\
        &
        \subset B_{\R^n}(0,\lambda^{j})
        \end{align}
        that $\psi_j(x)=1$ holds for all $x \in \supp V_{j-1}$.
        This fact is to be used for the cancellation of the worst nonlinearity. 
    \end{enumerate}
\end{rem}
To investigate the precise structure of $\{V_j\}_{j \in \N_0}$,
we decompose it into the principal and remainder part as
\begin{align}
    V_{j}=V^{(\rm p)}_{j} + V^{({\rm r})}_{j},
    \qquad
    V^{(\rm r)}_{j}
    =
    V^{({\rm r},1)}_{j}
    +
    V^{({\rm r},2)}_{j}
    +
    V^{({\rm r},3)}_{j},
\end{align}
where we have set 
\begin{align}
    V^{(\rm p)}_{j}(x)
    :={}&
    \sqrt{2}
    \lambda_{j-1}^4
    \sum_{k \in \mathcal{K}}
    \sum_{\ell \in \Lambda_j}
    \frac{1}{\lambda^{2\ell}\sqrt{h_j\ell}}
    \Gamma_{j,k}[V_{j-1}](x)
    \psi_{j}(x)
    \div\mathcal{D}
    \sp{k\cos\sp{\lambda^{\ell}a_k \cdot x}}
    \\
    ={}&
    \sqrt{2}
    \lambda_{j-1}^4
    \sum_{k \in \mathcal{K}}
    \sum_{\ell \in \Lambda_j}
    \frac{1}{\sqrt{h_j\ell}}
    \Gamma_{j,k}[V_{j-1}](x)
    \psi_{j}(x)k
    \cos\sp{\lambda^{\ell}a_k \cdot x}
\end{align}
and
\begin{align}
    &
    V^{({\rm r},1)}_{j}(x)
    :={}
    \rho_{j}*V^{(\rm p)}_{j}(x)-V^{(\rm p)}_{j}(x),
    \\
    &
    \begin{aligned}
    V^{({\rm r},2)}_{j}(x)
    :={}&
    \sqrt{2}
    \lambda_{j-1}^4
    \rho_{j}*
    \sum_{k \in \mathcal{K}}
    \sum_{\ell \in \Lambda_j}
    \frac{1}{\lambda^{\ell}\sqrt{h_j\ell}}
    \mathcal{D}
    \sp{\Gamma_{j,k}[V_{j-1}]
    \psi_j
    k}
    \nabla
    \cos\sp{\lambda^\ell a_k \cdot x}
    \\
    ={}&
    -
    \sqrt{2}
    \lambda_{j-1}^4
    \rho_{j}*
    \sum_{k \in \mathcal{K}}
    \sum_{\ell \in \Lambda_j}
    \frac{1}{\lambda^{\ell}\sqrt{h_j\ell}}
    \mathcal{D}
    \sp{\Gamma_{j,k}[V_{j-1}]
    \psi_j
    k}a_k
    \sin\sp{\lambda^\ell a_k \cdot x},
    \end{aligned}
    \\
    &
    V^{({\rm r},3)}_{j}(x)
    :={}
    \sqrt{2}
    \lambda_{j-1}^4
    \rho_{j}*
    \sum_{k \in \mathcal{K}}
    \sum_{\ell \in \Lambda_j}
    \frac{1}{\lambda^{2\ell}\sqrt{h_j\ell}}\div \mathcal{D}
    \sp{\Gamma_{j,k}[V_{j-1}]
    \psi_j
    k}
    \cos\sp{\lambda^\ell a_k \cdot x}.
\end{align}
We also introduce a function 
\begin{align}
    \Phi_{j,k,\ell}(x)
    :=
    \frac{1}{\sqrt{h_j\ell}}
    \Gamma_{j,k}[V_{j-1}](x)
    \psi_{j}(x)k
    \cos\sp{\lambda^{\ell}a_k \cdot x}
\end{align}
so that $V^{(\p)}_j$ has the following simplified representation: 
\begin{align}
    V^{(\rm p)}_{j}(x)
    =
    \sqrt{2}
    \lambda_{j-1}^4
    \sum_{k \in \mathcal{K}}
    \sum_{\ell \in \Lambda_j}
    \Phi_{j,k,\ell}(x).
\end{align}
\begin{lemm}\label{lemm:V_j-Besov}
    Let $1 \leq p,q \leq \infty$ and let $s \in \R$ satisfy
    \begin{align}
        \frac{n}{p}-n<s<2n.
    \end{align}
    Then, there exists a positive constant $C=C(n,p,q,s)$ such that 
    \begin{align}
        \label{est:V^p}
        C^{-1}
        \lambda^{\frac{n}{p}j}
        \lambda_{j-1}^{4-2\mu}
        \sigma_j(s,q)
        \leq 
        \n{V^{({\rm p})}_{j}}_{\dB_{p,q}^{s}}
        \leq 
        C
        \lambda^{\frac{n}{p}j}
        \lambda_{j-1}^{4-2\mu}
        \sigma_j(s,q),
    \end{align}
    and 
    \begin{align}\label{est:V^r}
        \n{V^{({\rm r})}_{j}}_{\dB_{p,q}^{s}}
        \leq 
        C
        \lambda^{nj}
        \lambda_{j-1}^{6-2\mu}\sigma_j(s-1,q)
    \end{align}
    for all $j \in \N$,
    where the sequence $\{\sigma_j(s,q)\}_{j \in \N}$ have been defined by
    \begin{align}
        \sigma_j(s,q)
        :=
        \sp{
        \displaystyle \sum_{\ell \in \Lambda_j}
        \frac{\lambda^{sq\ell}}{\ell^{\frac{q}{2}}}
        }^{\frac{1}{q}}.
    \end{align}
\end{lemm}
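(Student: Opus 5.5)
The plan is to apply Proposition \ref{prop:F-Besov} to the principal part $V^{(\p)}_j$, and to bound the three remainder pieces by the upper estimate \eqref{est:upper-F} (in its $q=1$ form from the Remark when convenient), together with the convolution bound $\|\rho_j*f\|_{\dB^s_{p,q}}\le \|f\|_{\dB^s_{p,q}}$.

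\emph{Step 1: the principal part.} Write $\cos(\lambda^\ell a_k\cdot x)=\tfrac12(e^{i\lambda^\ell a_k\cdot x}+e^{-i\lambda^\ell a_k\cdot x})$, so that $V^{(\p)}_j$ has the form \eqref{df:F}. The directions are $\{\pm a_k\}$, pairwise non-parallel by the choice of $a_k$, and the coefficients are $b_\ell=(h_j\ell)^{-1/2}$ for $\ell\in\Lambda_j$, zero otherwise. The amplitudes are $f_k=\sqrt2\lambda_{j-1}^4\Gamma_{j,k}[V_{j-1}]\psi_j k$. Since $\Gamma_k$ takes values in the compact interval $\mathcal{I}\subset(0,\infty)$, we have $\|f_k\|_{L^p}\sim\lambda_{j-1}^4\lambda^{nj/p}$, because $\psi_j$ is essentially the indicator of the ball of radius $\lambda^j$.

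Derivatives of $f_k$ are controlled using Lemma \ref{lemm:V[v]}: $\|\nabla^m V_{j-1}\|_{L^\infty}\lesssim\lambda_{j-1}^{2m+4}$, so $\|\nabla^M f_k\|_{L^p\cap L^1}\lesssim\lambda^{nj}\lambda_{j-1}^{C M}$. Since $h_j\sim\lambda_{j-1}^{4\mu}$, the main term of \eqref{est:upper-F} and \eqref{est:lower-F} equals $\lambda^{nj/p}\lambda_{j-1}^{4-2\mu}\sigma_j(s,q)$.

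It remains to check that the error terms are negligible. Every $\ell\in\Lambda_j$ satisfies $\lambda^\ell\ge\lambda_{j-1}^\mu$, where $\mu=4n+5$. Hence the gradient error $\lambda^{(s-1)\ell}\lambda_{j-1}^{10}$ is at most $\lambda_{j-1}^{10-\mu}\lambda^{s\ell}$, so it is small relative to the main term. The term $\lambda^{nj}$ is harmless since $\lambda^{j}\le\log\lambda_{j-1}$. For the high-order error, choose $M$ large (depending on $n$ and $s<2n$) so that $\lambda^{(s+n-M)\ell}\lambda_{j-1}^{CM}$ is tiny; this needs $\mu$ times the gain in $M$ to beat $CM$.

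This balance is the step I expect to be the main obstacle. With derivatives costing $\lambda_{j-1}^2$ each and $\lambda^\ell\ge\lambda_{j-1}^{\mu}$ with $\mu>2$, the high-order error does decay. However, the constant term $\lambda_{j-1}^{8}$ (from $\Gamma$ composed with $\lambda_{j-1}^{-8}\mathcal{D}v$) must be tracked. The sum over $\ell$ of the error is dominated by its largest term, which is compared with $\sigma_j(s,q)\ge\lambda^{s\ell}\ell^{-1/2}$ for the corresponding $\ell$. For $s\le0$ one compares with the smallest $\ell$ instead; there the constraint $s>n/p-n$ ensures the required decay.

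\emph{Step 2: the remainders.} For $V^{({\rm r},2)}_j$ and $V^{({\rm r},3)}_j$, apply \eqref{est:upper-F} after removing $\rho_j*$. The amplitudes $\mathcal{D}(\Gamma_{j,k}[V_{j-1}]\psi_jk)a_k$ have $L^p$ norm at most $\lambda^{nj}\lambda_{j-1}^{2}$, since a gradient of $\Gamma_{j,k}$ costs $\lambda_{j-1}^{-8}\|\nabla^2V_{j-1}\|_\infty\lesssim\lambda_{j-1}^{0}$. Adding the factor $\lambda^{-\ell}$ converts $\sigma_j(s,q)$ into $\sigma_j(s-1,q)$, giving at most $\lambda^{nj}\lambda_{j-1}^{6-2\mu}\sigma_j(s-1,q)$. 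The piece $V^{({\rm r},3)}_j$ is even better, with factor $\lambda^{-2\ell}$.

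For $V^{({\rm r},1)}_j=(\rho_j*-\Id)V^{(\p)}_j$, use that on each dyadic block $|\widehat\rho(\lambda_j^{-2}\xi)-1|\lesssim\lambda_j^{-2}2^{j'}$, applied blockwise inside the Proposition \ref{prop:F-Besov} argument. Since the relevant frequencies satisfy $2^{j'}\sim\lambda^\ell\le\lambda_j$, this gives a factor $\lambda_j^{-1}\le\lambda^{-\ell}$, and the bound follows as for the other two pieces. The Fourier tails at frequencies above $\lambda_j^2$ are absorbed via the $M$-derivative error term once more.
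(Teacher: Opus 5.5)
Your proposal follows essentially the same route as the paper's proof. You apply Proposition~\ref{prop:F-Besov} to $V^{(\p)}_j$ with amplitudes $\lambda_{j-1}^4\Gamma_{j,k}[V_{j-1}]\psi_j k$, whose derivatives are controlled through Lemma~\ref{lemm:V[v]} (the paper takes $M=2n$). For $V^{({\rm r},2)}_j$ and $V^{({\rm r},3)}_j$ you drop $\rho_j*$ and gain $\lambda^{-\ell}$ and $\lambda^{-2\ell}$. For the last piece you use the first-order cancellation of the mollifier, $\n{V^{({\rm r},1)}_j}_{\dB^{s}_{p,q}}\leq C\lambda_j^{-2}\n{V^{(\p)}_j}_{\dB^{s+1}_{p,q}}$, together with $\lambda^{\ell}\leq\lambda_j$ on $\Lambda_j$.

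The one step to state correctly is the error terms. They are $\ell^1$-sums over $\Lambda_j$, whose cardinality is of order $2^{\lambda_{j-1}^{4\mu}}$. So your termwise gain $\lambda^{-\ell}\leq\lambda_{j-1}^{-\mu}$ alone does not beat the $\ell^q$-quantity $\sigma_j(s,q)$; it fails for $s=0$, $q>1$, which is exactly the case relevant to (N1). You must keep the geometric factor, e.g.\ by H\"older as in the paper: $\sum_{\ell\in\Lambda_j}\lambda^{(s-1)\ell}\ell^{-1/2}\leq(\sum_{\ell\in\Lambda_j}\lambda^{-q'\ell})^{1/q'}\sigma_j(s,q)\leq C\lambda_{j-1}^{-\mu}\sigma_j(s,q)$, and similarly with $M-n(1-\frac{1}{p})$ in place of $1$. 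Also absorb $\lambda^{nj}$ via $\lambda^{j}\leq\lambda_{j-1}$, not $\lambda^{j}\leq\log\lambda_{j-1}$, which is false for $j=1$.
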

\begin{rem}
    The assumption $s<2n$ is not optimal, while this is enough for our purpose.
\end{rem}
\begin{proof}
To begin with, we prepare the $L^\infty$ estimates for the derivatives of $\Gamma_{j,k}[V_{j-1}]$.
Since $\Gamma_{j,k}[V_{j-1}]$ is constant for $j=1$, 
we focus on the case of $j \geq 2$.
Making use of \eqref{est:V[v]-1} for $V_{j-1}=\mathcal{V}_{j-1}[V_{j-2}]$, it holds
$\n{\nabla^s V_{j-1}}_{L^{\infty}} \leq C\lambda_{j-1}^{2s+4}$ ($s \in \N_0$).
Thus, from the Fa\`a di Bruno formula, it follows that
\begin{align}
    \n{\nabla^{m}\Gamma_{j,k}[V_{j-1}]}_{L^{\infty}}
    &\leq 
    C
    \sum_{\ell=1}^{m}
    \sum_{s_1+\dots+s_\ell=m}
    \lambda_{j-1}^{-8\ell}
    \n{\nabla^{s_1}\mathcal{D}V_{j-1}}_{L^{\infty}}
    \cdots
    \n{\nabla^{s_\ell}\mathcal{D}V_{j-1}}_{L^{\infty}}
    \\
    &
    \leq 
    C
    \sum_{\ell=1}^{m}
    \sum_{s_1+\dots+s_\ell=m}
    \lambda_{j-1}^{-8\ell}
    \cdot
    {\lambda_{j-1}^{2(s_1+1)+4}}
    \cdots
    {\lambda_{j-1}^{2(s_\ell+1)+4}}
    \\
    &
    \leq
    C\lambda_{j-1}^{2m}.
\end{align}

Let us prove \eqref{est:V^p}.
It follows from Proposition \ref{prop:F-Besov} with $M=2n$ that 
\begin{align}
    \n{V^{({\rm p})}_j}_{\dB_{p,q}^{s}}
    &
    \leq 
    C
    \lambda_{j-1}^{4-2\mu}
    \sigma_j(s,q)
    \sum_{k \in \mathcal{K}}
    \n{\Gamma_{j,k}[V_{j-1}]\psi_{j}}_{L^{p}}
    \\
    &\quad
    +
    C
    \lambda_{j-1}^{4-2\mu}
    \sum_{\ell\in \Lambda_j}
    \frac{\lambda^{(s-n)\ell}}{\sqrt{\ell}}
    \sum_{k \in \mathcal{K}}
    \n{\nabla^{2n}(\Gamma_{j,k}[V_{j-1}]\psi_{j})}_{L^{p} \cap L^1}
    \\
    &
    \leq 
    C\lambda^{\frac{n}{p}j}
    \lambda_{j-1}^{4-2\mu}
    \sigma_j(s,q)
    \sp{
    1+\lambda^{n(1-\frac{n}{p})j}\lambda_{j-1}^{(4-\mu)n}
    }
    \\
    &
    \leq 
    C
    \lambda^{\frac{n}{p}j}
    \lambda_{j-1}^{4-2\mu}
    \sigma_j(s,q),
\end{align}
where we have used 
\begin{align}
    &
    \sum_{\ell\in \Lambda_j}
    \frac{\lambda^{(s-n)\ell}}{\sqrt{\ell}}
    \leq 
    \sp{
    \sum_{\ell \in \Lambda_j}
    \lambda^{-nq'\ell}
    }^{\frac{1}{q'}}
    \sigma_j(s,q)
    \leq 
    \lambda_{j-1}^{-n\mu}
    \sigma_j(s,q)
\end{align}
and 
\begin{align}
    \n{\nabla^{2n}(\Gamma_{j,k}[V_{j-1}]\psi_{j})}_{L^{p} \cap L^1}
    \leq 
    C\lambda^{nj}\lambda_{j-1}^{4n}.
\end{align}
For the estimate of $V_j^{(\p)}$ from below, 
it holds by Proposition \ref{prop:F-Besov} and similar calculations as above that 
\begin{align}
    \n{V^{({\rm p})}_{j}}_{\dB_{p,q}^{s}}
    &
    \geq 
    C^{-1}
    \lambda_{j-1}^{4-2\mu}
    \sigma_j(s,q)
    \max_{k \in \mathcal{K}}
    \n{\Gamma_{j,k}[V_{j-1}]\psi_{j}}_{L^{p}}
    \\
    &\quad
    -
    C
    \lambda_{j-1}^{4-2\mu}
    \sum_{\ell\in \Lambda_j}
    \frac{\lambda^{(s-1)\ell}}{\sqrt{\ell}}
    \sum_{k \in \mathcal{K}}
    \n{\nabla(\Gamma_{j,k}[V_{j-1}]\psi_{j})}_{L^{p}}
    \\
    &\quad
    -
    C
    \lambda_{j-1}^{4-2\mu}
    \sum_{\ell\in \Lambda_j}
    \frac{\lambda^{(s-n)\ell}}{\sqrt{\ell}}
    \sum_{k \in \mathcal{K}}
    \n{\nabla^{2n}(\Gamma_{j,k}[V_{j-1}]\psi_{j})}_{L^{p} \cap L^1}
    \\
    &
    \geq 
    C^{-1}
    \lambda^{\frac{n}{p}j}
    \lambda_{j-1}^{4-2\mu}
    \sigma_j(s,q)
    \sp{
    1-C^2\lambda^{n(1-\frac{1}{p})j}\lambda_{j-1}^{2-\mu}-C^2\lambda^{n(1-\frac{1}{p})j}\lambda_{j-1}^{(4-\mu)n}
    }
    \\
    &
    \geq 
    C^{-1}
    \lambda^{\frac{n}{p}j}
    \lambda_{j-1}^{4-2\mu}
    \sigma_j(s,q).
\end{align}

Next, we show \eqref{est:V^r}.
It follows from $\int_{\R^n} \rho_{j}(y) dy = 1$ and the mean value formula that 
\begin{align}
    \Delta_{\ell}V^{({\rm r},1)}_{j}(x)
    &
    =
    \int_{\R^n}
    \rho_{j}(y)\sp{\Delta_{\ell}V^{({\rm p})}_{j}(x-y)-\Delta_{\ell}V^{({\rm p})}_j(x)}dy
    \\
    &
    =
    -
    \sum_{i=1}^n
    \int_{\R^n}
    y_i\rho_{j}(y)\int_0^1\partial_{x_i}\Delta_{\ell}V^{({\rm p})}_j(x-\theta y)d\theta dy,
\end{align}
which implies 
\begin{align}
    \n{\Delta_{\ell}V^{({\rm r},1)}_j}_{L^p}
    &
    \leq 
    \int_{\R^n}|y|\rho_{j}(y)dy
    \n{\nabla\Delta_{\ell}V^{({\rm p})}_j}_{L^p}
    = 
    C\lambda_j^{-2}
    \n{\nabla\Delta_{\ell}V^{({\rm p})}_j}_{L^p}.
\end{align}
Thus, we have 
\begin{align}
    \n{V^{({\rm r},1)}_j}_{\dB_{p,q}^s}
    &
    \leq 
    C
    \lambda_j^{-2}
    \n{V^{({\rm p})}_j}_{\dB_{p,q}^{s+1}}
    \\
    &\leq 
    C\lambda_j^{-2}
    \lambda^{\frac{n}{p}j}
    \lambda_{j-1}^{4-2\mu}\sigma_j(s+1,q)
    \\
    &\leq 
    C
    \lambda^{\frac{n}{p}j}
    \lambda_{j-1}^{4-2\mu}\sigma_j(s-1,q).\label{est:Vr1}
\end{align}
We follow the same argument as in the proof of \eqref{est:V^p} to obtain  
\begin{align}
    \n{V^{({\rm r},2)}_j}_{\dB_{p,q}^{s}}
    &
    \leq 
    C
    \lambda_{j-1}^{4-2\mu}
    \sigma_j(s-1,q)
    \sum_{k \in \mathcal{K}}
    \n{\nabla(\Gamma_{j,k}[V_{j-1}]\psi_{j})}_{L^{p}}
    \\
    &\quad
    +
    C
    \lambda_{j-1}^{4-2\mu}
    \sum_{\ell\in \Lambda_j}
    \frac{\lambda^{(s-1-n)\ell}}{\sqrt{\ell}}
    \sum_{k \in \mathcal{K}}
    \n{\nabla^{2n+1}(\Gamma_{j,k}[V_{j-1}]\psi_{j})}_{L^{p} \cap L^1}
    \\
    &
    \leq 
    C
    \lambda^{nj}
    \lambda_{j-1}^{6-2\mu}
    \sigma_j(s-1,q)
    \sp{
    1+\lambda_{j-1}^{(4-\mu)n}
    }
    \\
    &
    \leq 
    C\lambda^{nj}
    \lambda_{j-1}^{6-2\mu}
    \sigma_j(s-1,q)\label{est:Vr2}
\end{align}
and
\begin{align}
    \n{V^{({\rm r},3)}_j}_{\dB_{p,q}^{s}}
    &
    \leq 
    C
    \lambda_{j-1}^{4-2\mu}
    \sigma_j(s-2,q)
    \sum_{k \in \mathcal{K}}
    \n{\nabla^2(\Gamma_{j,k}[V_{j-1}]\psi_{j})}_{L^{p}}
    \\
    &\quad
    +
    C
    \lambda_{j-1}^{4-2\mu}
    \sum_{\ell\in \Lambda_j}
    \frac{\lambda^{(s-2-n)\ell}}{\sqrt{\ell}}
    \sum_{k \in \mathcal{K}}
    \n{\nabla^{2n+2}(\Gamma_{j,k}[V_{j-1}]\psi_{j})}_{L^{p} \cap L^1}
    \\
    &
    \leq 
    C
    \lambda^{nj}
    \lambda_{j-1}^{8-2\mu}
    \sigma_j(s-2,q)
    \sp{
    1+\lambda_{j-1}^{(4-\mu)n}
    }
    \\
    &
    \leq 
    C
    \lambda^{nj}
    \lambda_{j-1}^{8-3\mu}
    \sigma_j(s-1,q).\label{est:Vr3}
\end{align}
Gathering \eqref{est:Vr1}, \eqref{est:Vr2}, and \eqref{est:Vr3},
we obtain \eqref{est:V^r}
and complete the proof.
\end{proof}

\subsection{Proof of Theorem \ref{thm:p-sol}}
Now, we are in a position to prove Theorem \ref{thm:p-sol}. 
We separate the proof into two steps.
%

\noindent
{\it Step 1. Construction and estimates of the principal stationary flow.}
%
It follows from Lemmas \ref{lemm:V_j-Besov} that 
\begin{align}\label{est:ser-V^p}
    \n{V_j^{(\p)}}_{\dB_{p,q}^{\frac{n}{p}-1}}
    &
    \leq
    C
    \lambda^{\frac{n}{p}j}
    \lambda_{j-1}^{4-2\mu}
    \sigma_j\sp{\frac{n}{p}-1,q}
    \\
    &
    \leq 
    \begin{cases}
    C
    \lambda^{\frac{n}{p}j}
    \lambda_{j-1}^{4-2\mu}
    \sp{\mu\log_{\lambda}\lambda_{j-1}}^{\frac{1}{q}-\frac{1}{2}}
    & (p=n,\ q>2),
    \\[5pt]
    C
    \lambda^{\frac{n}{p}j}
    \lambda_{j-1}^{4-2\mu}
    \lambda_{j-1}^{(\frac{n}{p}-1)\mu}\sp{\mu\log_{\lambda}\lambda_{j-1}}^{-\frac{1}{2}} & (p>n)
    \end{cases}
    \\
    &
    \leq
    C
    \lambda^{\frac{n}{p}j}
    \lambda_{j-1}^{4-(3-\frac{n}{p})\mu}.
\end{align}
Hence, the series 
\begin{align}
    V^{(\p)}
    :=
    \sum_{j=1}^{\infty}V_j^{(\p)}
\end{align}
absolutely converges in the strong topology of $\dB_{p,q}^{n/p-1}(\R^n)$ and we see that 
\begin{align}
    \n{V^{(\p)}}_{\dB_{p,q}^{\frac{n}{p}-1}} 
    \leq 
    \sum_{j=1}^{\infty}
    \n{V_j^{(\p)}}_{\dB_{p,q}^{\frac{n}{p}-1}} 
    \leq 
    C\lambda^{\frac{n}{p}+4-(3-\frac{n}{p})\mu}.
\end{align}
For the estimate of $V^{(\p)}$ from below, it holds 
\begin{align}
    \n{V^{(\p)}}_{\dB_{p,q}^{\frac{n}{p}-1}}
    \geq{}& 
    \n{V_1^{(\p)}}_{\dB_{p,q}^{\frac{n}{p}-1}}
    -
    \sum_{j=2}^{\infty}
    \n{V_j^{(\p)}}_{\dB_{p,q}^{\frac{n}{p}-1}}
    \\
    \geq{}&
    C^{-1}
    \lambda^{\frac{n}{p}}
    \lambda^{4-2\mu}\sigma_1\sp{\frac{n}{p}-1,q}
    -
    C
    \sum_{j=2}^{\infty}
    \lambda^{\frac{n}{p}j}
    \lambda_{j-1}^{4-(3-\frac{n}{p})\mu}
    \\
    \geq{}& 
    C^{-1}
    \lambda^{\frac{n}{p}}
    \lambda^{4-(3-\frac{n}{p})\mu}
    -
    C
    \lambda^{\frac{2n}{p}}
    \lambda_{1}^{4-(3-\frac{n}{p})\mu}
    \\
    \geq{}& 
    C^{-1}
    \lambda^{\frac{n}{p}+4-(3-\frac{n}{p})\mu}
\end{align}
for sufficiently large $\lambda$.

Next, we consider the remainder part.
Let $\rho>n/2$.
Then, similarly as above, we see that the series 
\begin{align}
    V^{(\r)}
    :=
    \sum_{j=1}^{\infty}V_j^{(\r)}
\end{align}
absolutely converges in the strong topology of $\dB_{\rho,1}^{n/\rho-1}(\R^n)$ and 
\begin{align}
    \n{V^{(\r)}}_{\dB_{\rho,1}^{n/\rho-1}} 
    &\leq 
    \sum_{j=1}^{\infty}
    \n{V_j^{(\r)}}_{\dB_{\rho,1}^{\frac{n}{\rho}-1}}
    \\
    &
    \leq 
    C
    \sum_{j=1}^{\infty}
    \lambda^{nj}
    \lambda_{j-1}^{6-2\mu}
    \sigma_j\sp{\frac{n}{\rho}-2,1}
    \\
    &
    \leq 
    C
    \sum_{j=1}^{\infty}
    \lambda^{nj}
    \lambda_{j-1}^{6-(4-\frac{n}{\rho})\mu}
    \\
    &
    \leq 
    C\lambda^{n+6-(4-\frac{n}{\rho})\mu}
    \\
    &
    =
    C
    \lambda^{n(1-\frac{1}{p})+2-(1+\frac{n}{p}-\frac{n}{\rho})\mu}
    \lambda^{\frac{n}{p}+4-(3-\frac{n}{p})\mu}.
\end{align}
Now, we define $V:=V^{(\p)}+V^{(\r)}$.
Then, we see that 
\begin{align}
    \n{V}_{\dB_{p,q}^{\frac{n}{p}-1}}
    &
    \leq 
    \n{V^{(\p)}}_{\dB_{p,q}^{\frac{n}{p}-1}}
    +
    C
    \n{V^{(\r)}}_{\dB_{p,1}^{\frac{n}{p}-1}}
    \\
    &
    \leq 
    C
    \sp{
    1
    +
    \lambda^{n(1-\frac{1}{p})+2-\mu}
    }
    \lambda^{\frac{n}{p}+4-(3-\frac{n}{p})\mu}
    \\
    &
    \leq 
    C
    \lambda^{\frac{n}{p}+4-(3-\frac{n}{p})\mu}
    =
    C\lambda^{-\alpha(p)},
\end{align}
and
\begin{align}
    \n{V}_{\dB_{p,q}^{\frac{n}{p}-1}}
    &
    \geq 
    \n{V^{(\p)}}_{\dB_{p,q}^{\frac{n}{p}-1}}
    -
    C
    \n{V^{(\r)}}_{\dB_{p,1}^{\frac{n}{p}-1}}
    \\
    &
    \geq 
    C^{-1}
    \sp{
    1
    -
    C^2
    \lambda^{n(1-\frac{1}{p})+2-\mu}
    }
    \lambda^{\frac{n}{p}+4-(3-\frac{n}{p})\mu}
    \\
    &
    \geq 
    C^{-1}
    \lambda^{\frac{n}{p}+4-(3-\frac{n}{p})\mu}
    =
    C^{-1}\lambda^{-\alpha(p)},
\end{align}
provided that $\lambda$ is chosen sufficiently large.
Thus, we obtain \eqref{est:V-optimal}.
%

\noindent
{\it Step 2. Estimates of the perturbed force.}
%
In order to gain a better regularity and estimate for the force 
\begin{align}
    F=-\Delta V + \mathbb{P} \div (V \otimes V),
\end{align}
we focus on its precise structure.
To this end, we decompose $F$ as
\begin{align}
    &
    F
    =
    F_1 + F_2 + F_3;
    \\
    &
    \quad
    F_1
    :={}
    \sum_{j=1}^{\infty}
    \sp{-\Delta V_{j-1} + \mathbb{P} \div \sp{V^{({\rm p})}_j \otimes V^{({\rm p})}_j}},
    \\
    &
    \quad
    F_2
    :=
    \sum_{j_1 \neq j_2} \mathbb{P} \div \sp{ V^{({\rm p})}_{j_1} \otimes V^{({\rm p})}_{j_2}},
    \\
    &
    \quad
    F_3
    :=
    \mathbb{P} \div \sp{ V^{({\rm p})} \otimes V^{({\rm r})} + V^{({\rm r})} \otimes V^{({\rm p})} + V^{({\rm r})} \otimes V^{({\rm r})}}.
\end{align}
We first consider the estimate for $F_1$, which includes the most crucial cancellation in our analysis.
Using the formula $2\cos^2\theta = 1+ \cos 2 \theta$ and
\begin{align}
    \lambda_{j-1}^{8}
    \psi_{j}^2
    \sum_{k \in \mathcal{K}}
    \Gamma_k\sp{\Id-\lambda_{j-1}^{-8}\mathcal{D}V_{j-1}}^2
    k \otimes k
    &
    =
    \lambda_{j-1}^{8}
    \psi_{j}^2
    \sp{\Id-\lambda_{j-1}^{-8}\mathcal{D}V_{j-1}}
    \\
    &
    =
    \lambda_{j-1}^{8}\psi_{j}^2\Id
    -
    \mathcal{D}V_{j-1},
\end{align}
which is implied by Lemma \ref{lemm:Nash} and $\psi_{j}(x)=1$ for $x \in \supp V_{j-1}$,
we have
\begin{align}
    &
    V^{({\rm p})}_j(x) \otimes V^{({\rm p})}_j(x)
    ={}
    2\lambda_{j-1}^{8}
    \sum_{k_1, k_2 \in \mathcal{K}}
    \sum_{\ell_1,\ell_2 \in \Lambda_j}
    \Phi_{j,k_1,\ell_1}(x)
    \otimes
    \Phi_{j,k_2,\ell_2}(x)\\
    &\quad
    ={}
    \lambda_{j-1}^{8}
    \psi_{j}^2
    \sum_{k \in \mathcal{K}}
    \Gamma_k\sp{\Id-\lambda_{j-1}^{-8}\mathcal{D}V_{j-1}(x)}^2
    k \otimes k
    \\
    &\qquad
    +
    \frac{\lambda_{j-1}^{8}}{h_j}
    \sum_{k \in \mathcal{K}}
    \sum_{\ell \in \Lambda_j}
    \frac{1}{\ell}
    \Gamma_k\sp{\Id-\lambda_{j-1}^{-8}\mathcal{D}V_{j-1}(x)}^2
    \psi_{j}^2(x)
    \cos\sp{2\lambda^{\ell}a_k\cdot x}
    k \otimes k\\
    &\qquad
    +
    2\lambda_{j-1}^{8}
    \sum_{\substack{k_1,k_2 \in \mathcal{K} \\ k_1 \neq k_2}}
    \sum_{\ell_1,\ell_2 \in \Lambda_j}
    \Phi_{j,k_1,\ell}(x)
    \otimes 
    \Phi_{j,k_2,\ell}(x)\\
    &\qquad
    +
    2\lambda_{j-1}^{8}
    \sum_{k \in \mathcal{K}}
    \sum_{\substack{\ell_1,\ell_2 \in \Lambda_j \\ \ell_1 \neq \ell_2}}
    \Phi_{j,k,\ell_1}(x)
    \otimes 
    \Phi_{j,k,\ell_2}(x)\\
    &\quad
    ={}
    \lambda_{j-1}^{8}\psi_{j}(x)^2\Id
    -
    \mathcal{D}V_{j-1} (x)
    +F_{1,1;j}(x)
    +F_{1,2;j}(x)
    +F_{1,3;j}(x),
\end{align}
where we have set
\begin{align}
    &
    F_{1,1;j}(x)
    :=
    \frac{\lambda_{j-1}^{8}}{h_j}
    \sum_{k \in \mathcal{K}}
    \sum_{\ell \in \Lambda_j}
    \frac{1}{\ell}
    \Gamma_{j,k}\lp{V_{j-1}}(x)^2
    \psi_{j}^2
    \cos\sp{2\lambda^{\ell}a_k\cdot x}
    k \otimes k,
    \\
    &
    F_{1,2;j}(x)
    :=
    2\lambda_{j-1}^{8}
    \sum_{\substack{k_1,k_2 \in \mathcal{K} \\ k_1 \neq k_2}}
    \sum_{\ell_1,\ell_2 \in \Lambda_j}
    \Phi_{j,k_1,\ell_1}(x)
    \otimes
    \Phi_{j,k_2,\ell_2}(x),
    \\
    &
    F_{1,3;j}(x)
    :=
    2\lambda_{j-1}^{8}
    \sum_{k \in \mathcal{K}}
    \sum_{\substack{\ell_1,\ell_2 \in \Lambda_j \\ \ell_1 \neq \ell_2}}
    \Phi_{j,k,\ell_1}(x)
    \otimes 
    \Phi_{j,k,\ell_2}(x).
\end{align}
Then, from the cancellation property
\begin{align}
    -\Delta V_{j-1} + \mathbb{P}\div \sp{\lambda_{j-1}^8\psi_{j}^2\Id - \mathcal{D}V_{j-1}}
    =
    -\Delta V_{j-1} + \mathbb{P}\nabla (\lambda_{j-1}^8\psi_{j}^2) + \Delta V_{j-1}
    =
    0
\end{align}
it follows that
\begin{align}
    F_1
    &
    =
    \sum_{j=1}^{\infty}\sp{-\Delta V_{j-1} + \mathbb{P}\div \sp{\lambda_{j-1}^8\psi_{j}^2\Id - \mathcal{D}V_{j-1}
    +F_{1,1;j}
    +F_{1,2;j}
    +F_{1,3;j}}}
    \\
    &
    =
    \sum_{j=1}^{\infty}
    \mathbb{P}\div F_{1,1;j}
    +
    \sum_{j=1}^{\infty}
    \mathbb{P}\div F_{1,2;j}
    +
    \sum_{j=1}^{\infty}
    \mathbb{P}\div F_{1,3;j}
    \\
    &
    =: F_{1,1} + F_{1,2} + F_{1,3}.
\end{align}
To proceed with the estimate of $F_{1,1}$, 
we begin with the estimate of each element $F_{1,1;j}$.
It follows from Proposition \ref{prop:F-Besov} with $M=n$ that 
\begin{align}
    \n{F_{1,1;j}}_{\dB_{r,1}^{\frac{n}{r}-2}}
    &
    \leq 
    C\lambda_{j-1}^{8-4\mu}
    \sum_{\ell \in \lambda_j}\frac{\lambda^{(\frac{n}{r}-2)\ell}}{\ell}
    \sum_{k \in \mathcal{K}}
    \n{\Gamma_{j,k}[V_{j-1}]\psi_{j}}_{L^r}
    \\
    &\quad
    +
    C\lambda_{j-1}^{8-4\mu}
    \sum_{\ell \in \lambda_j}\frac{\lambda^{-2\ell}}{\ell}
    \sum_{k \in \mathcal{K}}
    \n{\nabla^n(\Gamma_{j,k}[V_{j-1}]\psi_{j})}_{L^r\cap L^1}
    \\
    &
    \leq 
    C
    \lambda_{j-1}^{8-4\mu}\cdot C\lambda_{j-1}^{(\frac{n}{r}-2)\mu}\cdot C\lambda^{\frac{n}{r}j}
    +
    C\lambda_{j-1}^{8-4\mu}\cdot C\lambda_{j-1}^{-2\mu}\cdot C\lambda^{nj}\lambda_{j-1}^{2n}
    \\
    &
    \leq 
    C\lambda_{j-1}^{4n+8-4\mu}.
\end{align}
For the estimate of $F_{1,2;j}$, since this part is rewritten as
\begin{align}
    F_{1,2;j}(x)
    =
    \frac{\lambda_{j-1}^{8}}{h_j}
    \sum_{\pm}
    \sum_{\ell_1,\ell_2 \in \Lambda_j}
    \sum_{\substack{k_1 \neq k_2}}
    &
    \frac{1}{\sqrt{\ell_1\ell_2}}
    \Gamma_{j,k_1}[V_{j-1}](x)
    \Gamma_{j,k_2}[V_{j-1}](x)
    \\
    &
    \psi_{j}(x)^2
    \cos\sp{(\lambda^{\ell_1}a_{k_1}\pm\lambda^{\ell_2}a_{k_2})\cdot x}
    k_1 \otimes k_2,
\end{align}
we see that 
\begin{align}
    \n{F_{1,2;j}}_{\dB_{r,1}^{\frac{n}{r}-2}}
    \leq
    {}&
    C
    \lambda_{j-1}^{8-4\mu}
    \sum_{\pm}
    \sum_{\substack{\ell_1 \in \Lambda_j}}
    \sum_{\ell_2=\ell_1}^{\infty}
    \sum_{\substack{k_1 \neq k_2}}
    \\
    &
    \left ( \frac{\lambda^{(\frac{n}{r}-2)\ell_2}}{\sqrt{\ell_1\ell_2}}
    |\lambda^{\ell_1-\ell_2}a_{k_1}\pm a_{k_2}|^{\frac{n}{r}-2}
    \n{\Gamma_{j,k_1}[V_{j-1}]
    \Gamma_{j,k_2}[V_{j-1}]
    \psi_{j}^2}_{L^r} \right. 
    \\
    & 
    \left. 
    \quad
    +
    \frac{\lambda^{-2\ell_2}}{\sqrt{\ell_1\ell_2}}
    |\lambda^{\ell_1-\ell_2}a_{k_1}\pm a_{k_2}|^{-2}
    \n{\nabla^n(\Gamma_{j,k_1}[V_{j-1}]
    \Gamma_{j,k_2}[V_{j-1}]
    \psi_{j}^2)}_{L^r\cap L^1} \right)
    \\
    \leq{}& 
    C
    \lambda_{j-1}^{8-4\mu}\cdot C\lambda_{j-1}^{(\frac{n}{r}-2)\mu}\cdot C\lambda^{\frac{n}{r}}
    +
    C\lambda_{j-1}^{8-4\mu}\cdot C\lambda_{j-1}^{-2\mu}\cdot C\lambda^n\lambda_{j-1}^{2n}
    \\
    \leq{}& 
    C\lambda_{j-1}^{4n+8-4\mu},
\end{align}
where we have used 
\begin{align}
    |\lambda^{\ell_1-\ell_2}a_{k_1}\pm a_{k_2}|^2
    ={}&
    \sp{\lambda^{\ell_1-\ell_2}\pm \cos \vartheta(a_{k_1},a_{k_2})}^2    
    +
    \sin^2\vartheta(a_{k_1},a_{k_2})
    \\
    \geq {}&
    \sin^2\vartheta(a_{k_1},a_{k_2})>0.
\end{align}
Here, one may recall \eqref{df:theta} for the definition of $\vartheta(\cdot,\cdot)$.
We may treat $F_{1,3;j}$ similarly.
Indeed, as there holds
\begin{align}
    F_{1,3;j}(x)
    =
    \frac{\lambda_{j-1}^{8}}{h_j}
    \sum_{\pm}
    \sum_{\ell_1 \neq \ell_2}
    \sum_{k \in \mathcal{K}}
    &
    \frac{1}{\sqrt{\ell_1\ell_2}}
    \Gamma_{j,k}[V_{j-1}](x)^2
    \\
    &
    \psi_{j}(x)^2
    \cos\sp{(\lambda^{\ell_1}\pm\lambda^{\ell_2})a_k\cdot x}
    k \otimes k,
\end{align}
we have
\begin{align}
    \n{F_{1,3;j}}_{\dB_{r,1}^{\frac{n}{r}-2}}
    \leq
    {}&
    C
    \lambda_{j-1}^{8-4\mu}
    \sum_{\pm}
    \sum_{k \in \mathcal{K}}
    \sum_{\substack{\ell_1 \in \Lambda_j}}
    \sum_{\ell_2=\ell_1+1}^{\infty}
    \\
    &
    \left ( \frac{\lambda^{(\frac{n}{r}-2)\ell_2}}{\sqrt{\ell_1\ell_2}}
    |(1\pm \lambda^{\ell_1-\ell_2})a_k|^{\frac{n}{r}-2}
    \n{\Gamma_{j,k}[V_{j-1}]^2
    \psi_{j}^2}_{L^r} \right. 
    \\
    & 
    \left. 
    \quad
    +
    \frac{\lambda^{-2\ell_2}}{\sqrt{\ell_1\ell_2}}
    |(1\pm \lambda^{\ell_1-\ell_2})a_k|^{-2}
    \n{\nabla^n(\Gamma_{j,k}[V_{j-1}]^2
    \psi_{j}^2)}_{L^r\cap L^1} \right)
    \\
    \leq{}& 
    C
    \lambda_{j-1}^{8-4\mu}\cdot C\lambda_{j-1}^{(\frac{n}{r}-2)\mu}\cdot C\lambda^{\frac{n}{r}}
    +
    C\lambda_{j-1}^{8-4\mu}\cdot C\lambda_{j-1}^{-2\mu}\cdot C\lambda^n\lambda_{j-1}^{2n}
    \\
    \leq{}& 
    C\lambda_{j-1}^{4n+8-4\mu}.
\end{align}
Hence, we obtain 
\begin{align}
    \n{F_{1}}_{\dB_{r,1}^{\frac{n}{r}-3}}
    &
    \leq
    C
    \sum_{j=1}^{\infty}
    \sp{
    \n{F_{1,1;j}}_{\dB_{r,1}^{\frac{n}{r}-2}}
    +
    \n{F_{1,2;j}}_{\dB_{r,1}^{\frac{n}{r}-2}}
    +
    \n{F_{1,3;j}}_{\dB_{r,1}^{\frac{n}{r}-2}}
    }
    \\
    &
    \leq
    C\sum_{j=1}^{\infty}
    \lambda_{j-1}^{4n+8-4\mu}
    \\
    &
    \leq{}
    C\lambda^{4n+8-4\mu}
    =
    C\lambda^{-\beta}.
    \label{est:F1}
\end{align}

Next, we consider the estimate on $F_2$.
Let us rewrite $F_2$ as 
\begin{align}
    F_2
    =
    \sum_{\substack{j_1,j_2 \in \N \\ j_1 \neq j_2}}
    \mathbb{P}\div F_{2;j_1,j_2},
\end{align}
where we have set
\begin{align}
    &
    F_{2;j_1,j_2}(x)
    :={}
    V^{({\rm p})}_{j_1} \otimes V^{({\rm p})}_{j_2}(x)
    \\
    &\quad
    =
    2\lambda_{j_1-1}^4\lambda_{j_2-1}^4
    \sum_{k_1, k_2 \in \mathcal{K}}
    \sum_{\ell_1,\ell_2 \in \Lambda_j}
    \Phi_{j_1,k_1,\ell_1}(x)
    \otimes
    \Phi_{j_2,k_2,\ell_2}(x)
    \\
    &\quad
    =
    \frac{\lambda_{j_1-1}^4\lambda_{j_2-1}^4}{\sqrt{h_{j_1}h_{j_2}}}
    \sum_{\pm}
    \sum_{\ell_1 \in \Lambda_{j_1}}
    \sum_{\ell_2 \in \Lambda_{j_2}}
    \sum_{k_1, k_2 \in \mathcal{K}}
    \frac{1}{\sqrt{\ell_1\ell_2}}
    \Gamma_{j_1,k_1}[V_{j_1-1}](x)
    \Gamma_{j_2,k_2}[V_{j_2-1}](x)
    \\
    & 
    \qquad \qquad \qquad \qquad \qquad \qquad 
    \psi_{j_1}(x)\psi_{j_2}(x)
    \cos\sp{(\lambda^{\ell_1}a_{k_1}\pm\lambda^{\ell_2}a_{k_2})\cdot x}k_1 \otimes k_2.
\end{align}
As $F_{2;j_1,j_2} = (F_{2;j_2,j_1})^{\top}$, it suffices to consider the case $j_1<j_2$.
Since it holds
\begin{align}
    |\lambda^{\ell_1-\ell_2}a_{k_1}\pm a_{k_2}|
    \geq 
    1 - \lambda^{\ell_1-\ell_2} 
    \geq 
    \frac{1}{2}
    >0,
\end{align}
we follow the similar strategy as for the estimate of $F_{1,2;j}$ to see that
\begin{align}
    \n{F_{2;j_1,j_2}}_{\dB_{r,1}^{\frac{n}{r}-2}}
    \leq{}&
    C\lambda_{j_1-1}^{4-2\mu}\lambda_{j_2-1}^{4-2\mu}
    \sum_{\ell_1 \in \Lambda_{j_1}}
    \sum_{\ell_2 \in \Lambda_{j_2}}
    \sum_{k_1, k_2 \in \mathcal{K}}
    \\
    &
    \left ( \frac{\lambda^{(\frac{n}{r}-2)\ell_2}}{\sqrt{\ell_1\ell_2}}
    |\lambda^{\ell_1-\ell_2}a_{k_1}\pm a_{k_2}|^{\frac{n}{r}-2}
    \n{\Gamma_{j_1,k_1}[V_{j_1-1}]
    \Gamma_{j_2,k_2}[V_{j_2-1}]
    \psi_{j_1}}_{L^r} \right. 
    \\
    & 
    \left. 
    \quad
    +
    \frac{\lambda^{-2\ell_2}}{\sqrt{\ell_1\ell_2}}
    |\lambda^{\ell_1-\ell_2}a_{k_1}\pm a_{k_2}|^{-2}
    \n{\nabla^n(\Gamma_{j_1,k_1}[V_{j_1-1}]
    \Gamma_{j_2,k_2}[V_{j_2-1}]
    \psi_{j_1})}_{L^r\cap L^1} \right)
    \\
    \leq{}&
    C
    \lambda^{\frac{n}{r}j_1}
    \lambda_{j_1-1}^{4-2\mu}\lambda_{j_2-1}^{4-2\mu}
    \sum_{\ell_1 \in \Lambda_{j_1}}
    \sum_{\ell_2 : \ell_2 \geq \ell_1}
    \lambda^{(\frac{n}{r}-2)\ell_2}
    \\
    &
    +
    C
    \lambda^{nj_1}
    \lambda_{j_1-1}^{4-2\mu}\lambda_{j_2-1}^{4-2\mu+2n}
    \sum_{\ell_1 \in \Lambda_{j_1}}
    \sum_{\ell_2: \ell_2 \geq \ell_1}
    \lambda^{-2\ell_2}
    \\
    \leq{}&
    C
    \lambda^{\frac{n}{r}j_1}
    \lambda_{j_1-1}^{4-(4-\frac{n}{r})\mu}\lambda_{j_2-1}^{4-2\mu}
    +
    C
    \lambda^{nj_1}
    \lambda_{j_1-1}^{4-4\mu}\lambda_{j_2-1}^{4-2\mu+2n}
    \\
    \leq{}&
    C
    \lambda^{nj_1}
    \lambda_{j_1-1}^{4-(4-\frac{n}{r})\mu}\lambda_{j_2-1}^{4-2\mu+2n}.
\end{align}
Hence, we obtain 
\begin{align}
    \n{F_{2}}_{\dB_{r,1}^{\frac{n}{r}-3}}
    &
    \leq
    C
    \sum_{j_1=1}^{\infty}
    \sum_{j_2=j_1+1}^{\infty}
    \n{F_{2;j_1,j_2}}_{\dB_{r,1}^{\frac{n}{r}-2}}
    \\
    &
    \leq
    C
    \sum_{j_1=1}^{\infty}
    \lambda^{nj_1}
    \lambda_{j_1-1}^{4-(4-\frac{n}{r})\mu}
    \sum_{j_2=j_1+1}^{\infty}
    \lambda_{j_2-1}^{4-2\mu+2n}
    \\
    &
    \leq
    C
    \sum_{j_1=1}^{\infty}
    \lambda^{nj_1}
    \lambda_{j_1-1}^{8+2n-(6-\frac{n}{r})\mu}
    \\
    &
    \leq
    C
    \lambda^{8+3n-(6-\frac{n}{r})\mu}
    \leq
    C
    \lambda^{4n+8-4\mu}
    =
    C\lambda^{-\beta}.
    \label{est:F2}
\end{align}
Finally, we focus on the estimate of $F_3$.
Let $n<p_*<2n$ satisfy \eqref{p-r} with $p=p_*$.
It then follows from Lemma \ref{lemm:nonlin-sta} that
\begin{align}
    \n{F_3}_{\dB_{r,1}^{\frac{n}{r}-3}}
    &
    \leq 
    C
    \sp{
    \n{V^{({\rm p})}}_{\dB_{p_*,1}^{\frac{n}{p_*}-1}}
    +
    \n{V^{({\rm r})}}_{\dB_{r,1}^{\frac{n}{r}-1}}
    }
    \n{V^{({\rm r})}}_{\dB_{r,1}^{\frac{n}{r}-1}}
    \\
    &\leq 
    C
    \sp{
    \lambda^{\frac{n}{p_*}+4-(3-\frac{n}{p_*})\mu}
    +
    \lambda^{n+6-(4-\frac{n}{r})\mu}
    }
    \lambda^{n+6-(4-\frac{n}{r})\mu}
    \\
    &\leq 
    C
    \sp{
    \lambda^{5-2\mu}
    +
    \lambda^{n+6-2\mu}
    }
    \lambda^{n+6-2\mu}
    \\
    &
    \leq 
    C
    \lambda^{2n+12-4\mu}
    <
    C
    \lambda^{4n+8-4\mu}
    =
    C\lambda^{-\beta}.
    \label{est:F3}
\end{align}
Combining \eqref{est:F1}, \eqref{est:F2}, and \eqref{est:F3}, we obtain \eqref{est:F}.
Thus, we complete the proof.

\section{Proof of Theorem \ref{thm:main}}\label{sec:pf}
Now, we are ready to present the proof of Theorem \ref{thm:main}.
\begin{proof}
We separate the proof into two steps; the first step achieves the construction of the non-trivial solutions to the unforced stationary Navier--Stokes equations, and the second step establishes the desired non-stationary flow by the standard stability argument around the steady state provided in the previous step.
Let $V_\lambda$ and $F_{\lambda}$ be vector fields constructed in Theorem \ref{thm:p-sol}.
%

\noindent
{\it Step 1. Steady Navier--Stokes flow around $V_{\lambda}$.}
%
Fix a $n<p_*<2n$ and $n/2<r<n$ so that \eqref{p-r} holds with $p$ replaced by $p_*$. 
Let $\lambda \geq \lambda_{*0}$, where $\lambda_{*0}$ is a constant appearing in Theorem \ref{thm:p-sol}.
Let us construct the solution to \eqref{eq:sNS} of the form $U_{\lambda}=V_{\lambda}+W_{\lambda}$.
To this end, we consider the stationary perturbed system
\begin{align}\label{eq:W}
    \begin{cases}
        -\Delta W_{\lambda} + \mathbb{P}\div \sp{V_{\lambda} \otimes W_{\lambda} + W_{\lambda} \otimes V_{\lambda} + W_{\lambda} \otimes W_{\lambda}} = - F_{\lambda}, \quad & x \in \R^n, \\
        \div W_{\lambda} = 0, \quad & x \in \R^n,
    \end{cases}
\end{align}
and define an operator $\mathcal{X}$ on $\dB_{r,1}^{n/r-1}(\R^n)$ by
\begin{align}
    \mathcal{X}[W]
    := - (-\Delta)^{-1}F_{\lambda} - (-\Delta)^{-1}\mathbb{P}\div \sp{V_{\lambda} \otimes W + W \otimes V_{\lambda}+ W \otimes W}.
\end{align}
Then, it follows from Lemma \ref{lemm:nonlin-sta} that 
\begin{align}
    &
    \begin{aligned}
    \n{\mathcal{X}[W]}_{\dB_{r,1}^{\frac{n}{r}-1}}
    &
    \leq 
    C
    \n{F_{\lambda}}_{\dB_{r,1}^{\frac{n}{r}-3}}
    +
    C
    \sp{
    \n{V_{\lambda}}_{\dB_{p_*,1}^{\frac{n}{p_*}-1}}
    +
    \n{W}_{\dB_{r,1}^{\frac{n}{r}-1}}
    }
    \n{W}_{\dB_{r,1}^{\frac{n}{r}-1}},
    \\
    &
    \leq 
    C_1
    \lambda^{-\beta}
    +
    C_1
    \sp{
    \lambda^{-\alpha(p_*)}
    +
    \n{W}_{\dB_{r,1}^{\frac{n}{r}-1}}
    }
    \n{W}_{\dB_{r,1}^{\frac{n}{r}-1}},
    \end{aligned}
    \\
    &
    \n{\mathcal{X}[W_1]-\mathcal{X}[W_2]}_{\dB_{r,1}^{\frac{n}{r}-1}}
    \\
    &\quad
    \leq 
    C
    \sp{
    \n{V_{\lambda}}_{\dB_{p_*,1}^{\frac{n}{p_*}-1}}
    +
    \n{W_1}_{\dB_{r,1}^{\frac{n}{r}-1}}
    +
    \n{W_2}_{\dB_{r,1}^{\frac{n}{r}-1}}
    }
    \n{W_1-W_2}_{\dB_{r,1}^{\frac{n}{r}-1}}
    \\
    &\quad
    \leq 
    C_1
    \sp{
    \lambda^{-\alpha(p_*)}
    +
    \n{W_1}_{\dB_{r,1}^{\frac{n}{r}-1}}
    +
    \n{W_2}_{\dB_{r,1}^{\frac{n}{r}-1}}
    }
    \n{W_1-W_2}_{\dB_{r,1}^{\frac{n}{r}-1}}
\end{align}
for all $W,W_1,W_2 \in \dB_{r,1}^{n/r-1}(\R^n)$ 
with some positive constant $C_1=C_1(n,p_*,r)$.
Set
\begin{align}
    \mathscr{X}:=
    \Mp{W \in \dB_{r,1}^{n/r-1}(\R^n)\ ;\ \n{W}_{\dB_{r,1}^{n/r-1}} \leq 2C_1\lambda^{-\beta}}.
\end{align}
Then,
choosing a constant $\lambda_{*1}$ so large that $\lambda_{*1} \geq \lambda_{*0}$ and $2C_1(\lambda_{*1}^{-\alpha(p_*)}+4C_1\lambda_{*1}^{-\beta}) \leq 1$, we have 
\begin{align}
    \n{\mathcal{X}[W]}_{\dB_{r,1}^{\frac{n}{r}-1}}
    \leq 
    \frac{3}{2}C_1\lambda^{-\beta},
    \qquad
    \n{\mathcal{X}[W_1]-\mathcal{X}[W_2]}_{\dB_{r,1}^{\frac{n}{r}-1}}
    \leq 
    \frac{1}{2}
    \n{W_1-W_2}_{\dB_{r,1}^{\frac{n}{r}-1}}
\end{align}
for all $W,W_1,W_2 \in \mathscr{X}$.
Hence, the contraction mapping principle yields the existence of a unique solution $W_{\lambda} \in \mathscr{X}$ to \eqref{eq:W}. 

Let $U_{\lambda}:=V_{\lambda}+W_{\lambda}$. Then, we see that $U_{\lambda}$ solves \eqref{eq:sNS}.
Moreover, it follows from Theorem \ref{thm:p-sol} and $W_{\lambda} \in \mathscr{X}$ that 
\begin{align}
    &
    \n{U_{\lambda}}_{\dB_{p,q}^{\frac{n}{p}-1}} 
    \leq 
    \n{V_{\lambda}}_{\dB_{p,q}^{\frac{n}{p}-1}}
    +
    C
    \n{W_{\lambda}}_{\dB_{r,1}^{\frac{n}{r}-1}}
    \leq 
    C\sp{\lambda^{-\alpha(p)}+\lambda^{-\beta}},
    \\
    &
    \n{U_{\lambda}}_{\dB_{p,q}^{\frac{n}{p}-1}} 
    \geq 
    \n{V_{\lambda}}_{\dB_{p,q}^{\frac{n}{p}-1}}
    -
    C
    \n{W_{\lambda}}_{\dB_{r,1}^{\frac{n}{r}-1}}
    \geq 
    C^{-1}\lambda^{-\alpha(p)}-C\lambda^{-\beta}.
\end{align}
Since $\alpha(p)<\beta$, there exist $\lambda_{*2} \geq \lambda_{*1}$ and $C_2>0$ such that 
\begin{align}\label{U_lambda}
    C_2^{-1}\lambda^{-\alpha(p)} \leq \n{U_\lambda}_{\dB_{p,q}^{\frac{n}{p}-1}} \leq C_2\lambda^{-\alpha(p)}
\end{align}
for all $\lambda \geq \lambda_{*2}$.
%

\noindent
{\it Step 2. Non-stationary Navier--Stokes flow around $U_{\lambda}$.}
%
Taking the smallness parameter $\eta=0$ for the case of $p \geq 2n$ or $q=\infty$,
and recalling the continuous embedding $\dB_{p_1,q_1}^{n/p_1-1}(\R^n) \hookrightarrow \dB_{p_2,q_2}^{n/p_2-1}(\R^n)$ ($1 \leq p_1 \leq p_2 \leq \infty$, $1 \leq q_1 \leq q_2 \leq \infty$),
we see that it suffices to consider only the case of $p<2n$ and $q<\infty$.
Then, we may choose a $\theta$ satisfying \eqref{p-r-theta}.
{We arbitrary fix a solenoidal initial datum $u_0$ satisfying
\begin{align}
    u_0 \in \dB_{p,q}^{\frac{n}{p}-1}(\R^n),
    \qquad
    \n{u_0}_{\dB_{p,q}^{\frac{n}{p}-1}}
    \leq \eta,
\end{align}
where $\eta$ is a positive constant to be determined later.
We shall prove that there exists a $\lambda_{*3} \geq \lambda_{*2}$ such that this fixed $u_0$ generates a family $\{ u_\lambda \}_{\lambda \geq \lambda_{*3}}$ of solutions to \eqref{eq:NS-intro} satisfying 
\begin{align}
    \lim_{t \to \infty} \n{u_{\lambda}(t) - U_{\lambda}}_{\dB_{p,q}^{\frac{n}{p}-1}} = 0,
\end{align}
where $U_{\lambda}$ is the solution to \eqref{eq:sNS} satisfying \eqref{U_lambda}, which was constructed in the previous step.
Once this is proved, we see that $u_0$ generates infinitely many solutions since $U_{\lambda}$ are distinct for each sufficiently large $\lambda$ due to \eqref{U_lambda}.}
To this end, we consider the perturbed system of \eqref{eq:NS-intro} around $U_{\lambda}$.
\begin{align}\label{eq:w}
    \begin{cases}
        \partial_t w_{\lambda} - \Delta w_{\lambda} + \mathbb{P}\div (U_{\lambda} \otimes w_{\lambda} + w_{\lambda} \otimes U_{\lambda} + w_{\lambda} \otimes w_{\lambda}) = 0, \\
        \div w_{\lambda} = 0,\\
        w_{\lambda}(0,x)=w_{0,\lambda}:=u_0-U_{\lambda}.
    \end{cases}
\end{align}
To construct the perturbed solution, we define a mapping 
\begin{align}
    \mathcal{Y}[w](t)
    :=
    e^{t\Delta}w_{0,\lambda}
    -\int_0^t e^{(t-\tau)\Delta}\mathbb{P}\div\sp{U_{\lambda} \otimes w(\tau) + w(\tau) \otimes U_{\lambda} + w(\tau) \otimes w(\tau)}d\tau.
\end{align}
Then, we see by Lemma \ref{lemm:nonlin-Duha} that 
\begin{align}
    &
    \n{\mathcal{Y}[w]}_{\widetilde{L^{\infty}}(0,\infty;\dB_{p,q}^{\frac{n}{p}-1})\cap\widetilde{L^{\theta}}(0,\infty;\dB_{p,q}^{\frac{n}{p}-1+\frac{2}{\theta}})}
    \\
    &\quad
    \leq{}
    C
    \n{w_{0,\lambda}}_{\dB_{p,q}^{\frac{n}{p}-1}}
    +
    C
    \sp{
    \n{U_{\lambda}}_{\dB_{p,q}^{\frac{n}{p}-1}}
    +
    \n{w}_{\widetilde{L^{\infty}}(0,\infty;\dB_{p,q}^{\frac{n}{p}-1})}
    }
    \n{w}_{\widetilde{L^{\theta}}(0,\infty;\dB_{p,q}^{\frac{n}{p}-1+\frac{2}{\theta}})}
    \\
    &\quad
    \leq{}
    C_3
    \n{w_{0,\lambda}}_{\dB_{p,q}^{\frac{n}{p}-1}}
    +
    C_3
    \sp{
    \lambda^{-\alpha(p)}
    +
    \n{w}_{\widetilde{L^{\infty}}(0,\infty;\dB_{p,q}^{\frac{n}{p}-1})}
    }
    \n{w}_{\widetilde{L^{\theta}}(0,\infty;\dB_{p,q}^{\frac{n}{p}-1+\frac{2}{\theta}})}
\end{align}
and
\begin{align}
    &
    \n{\mathcal{Y}[w_1]-\mathcal{Y}[w_2]}_{\widetilde{L^{\theta}}(0,\infty;\dB_{p,q}^{\frac{n}{p}-1+\frac{2}{\theta}})}
    \\
    &\quad
    \leq{}
    C
    \sp{
    \n{U_{\lambda}}_{\dB_{p,q}^{\frac{n}{p}-1}}
    +
    \sum_{m=1}^2\n{w_m}_{\widetilde{L^{\infty}}(0,\infty;\dB_{p,q}^{\frac{n}{p}-1})}
    }
    \n{w_1-w_2}_{\widetilde{L^{\theta}}(0,\infty;\dB_{p,q}^{\frac{n}{p}-1+\frac{2}{\theta}})}
    \\
    &\quad
    \leq{}
    C_3
    \sp{
    \lambda^{-\alpha(p)}
    +
    \sum_{m=1}^2\n{w_m}_{\widetilde{L^{\infty}}(0,\infty;\dB_{p,q}^{\frac{n}{p}-1})}
    }
    \n{w_1-w_2}_{\widetilde{L^{\theta}}(0,\infty;\dB_{p,q}^{\frac{n}{p}-1+\frac{2}{\theta}})}
\end{align}
for all $w,w_1,w_2 \in \widetilde{C}([0,\infty);\dB_{p,q}^{n/p-1}(\R^n))\cap \widetilde{L^{\theta}}(0,\infty;\dB_{p,q}^{n/p-1+2/\theta}(\R^n))$ with some positive constant $C_2=C_2(n,p,q,\theta)$.
We define the solution space $\mathscr{Y}$ by
\begin{align}
    \mathscr{Y}
    :=
    \Mp{
    \begin{aligned}
    w \in {}&
    \widetilde{C}([0,\infty);\dB_{p,q}^{\frac{n}{p}-1}(\R^n))
    \cap 
    \widetilde{L^{\theta}}(0,\infty;\dB_{p,q}^{\frac{n}{p}-1+\frac{2}{\theta}}(\R^n))
    \ ;\\
    &
    \n{w}_{\widetilde{L^{\infty}}(0,\infty;\dB_{p,q}^{\frac{n}{p}-1})\cap\widetilde{L^{\theta}}(0,\infty;\dB_{p,q}^{\frac{n}{p}-1+\frac{2}{\theta}})}
    \leq 
    2C_3\n{w_{0,\lambda}}_{\dB_{p,q}^{\frac{n}{p}-1}}
    \end{aligned}
    }.
\end{align}
Then for $w,w_1,w_2 \in \mathscr{Y}$, we have 
\begin{align}
    &
    \n{\mathcal{Y}[w]}_{\widetilde{L^{\infty}}(0,\infty;\dB_{p,q}^{\frac{n}{p}-1})\cap\widetilde{L^{\theta}}(0,\infty;\dB_{p,q}^{\frac{n}{p}-1+\frac{2}{\theta}})}
    \\
    &\quad
    \leq 
    C_3
    \n{w_{0,\lambda}}_{\dB_{p,q}^{\frac{n}{p}-1}}
    +
    C_3\sp{\lambda^{-\alpha(p)} + 4C_2\n{w_{0,\lambda}}_{\dB_{p,q}^{\frac{n}{p}-1}}}
    \n{w}_{\widetilde{L^{\theta}}(0,\infty;\dB_{p,q}^{\frac{n}{p}-1+\frac{2}{\theta}})},
    \\
    &
    \n{\mathcal{Y}[w_1]-\mathcal{Y}[w_2]}_{\widetilde{L^{\theta}}(0,\infty;\dB_{p,q}^{\frac{n}{p}-1+\frac{2}{\theta}})}
    \\
    &\quad
    \leq{}
    C_3\sp{\lambda^{-\alpha(p)} + 4C_2\n{w_{0,\lambda}}_{\dB_{p,q}^{\frac{n}{p}-1}}}
    \n{w_1-w_2}_{\widetilde{L^{\theta}}(0,\infty;\dB_{p,q}^{\frac{n}{p}-1+\frac{2}{\theta}})}
\end{align}
{
Choosing $\lambda_{*3}$ so large and $\eta$ so small that
\begin{align}
    {\lambda^{-\alpha(p)} + 4C_2\n{w_{0,\lambda}}_{\dB_{p,q}^{\frac{n}{p}-1}}}
    &
    =
    {\lambda^{-\alpha(p)} + 4C_2\n{u_0 - U_{\lambda}}_{\dB_{p,q}^{\frac{n}{p}-1}}}
    \\
    &
    \leq 
    {\lambda^{-\alpha(p)} + 4C_3\n{u_0}_{\dB_{p,q}^{\frac{n}{p}-1}} +4C_3\n{U_{\lambda}}_{\dB_{p,q}^{\frac{n}{p}-1}}}
    \\
    &
    \leq 
    {\lambda^{-\alpha(p)} + 4C_2\eta + 4C_2C_3\lambda^{-\alpha(p)}}
    \\
    &
    \leq \frac{1}{2C_3}
\end{align}
for all $\lambda \geq \lambda_{*3}$},
we have
\begin{align}
    &
    \n{\mathcal{Y}[w]}_{\widetilde{L^{\infty}}(0,\infty;\dB_{p,q}^{\frac{n}{p}-1})\cap\widetilde{L^{\theta}}(0,\infty;\dB_{p,q}^{\frac{n}{p}-1+\frac{2}{\theta}})}
    \leq 
    \frac{3}{2}C_2\n{w_0}_{\dB_{p,q}^{\frac{n}{p}-1}},
    \\
    &
    \n{\mathcal{Y}[w_1]-\mathcal{Y}[w_2]}_{\widetilde{L^{\theta}}(0,\infty;\dB_{p,q}^{\frac{n}{p}-1+\frac{2}{\theta}})}
    \leq 
    \frac{1}{2}
    \n{w_1-w_2}_{\widetilde{L^{\theta}}(0,\infty;\dB_{p,q}^{\frac{n}{p}-1+\frac{2}{\theta}})}
\end{align}
for all $w,w_1,w_2 \in \mathscr{Y}$,
which implies that
$\mathcal{Y}$ is a contraction mapping on $\mathscr{Y}$.
Hence, the Banach fixed point theorem implies that there exists a unique mild solution $w_{\lambda} \in \mathscr{Y}$ to the perturbed system \eqref{eq:w}.
We then obtain {the non-stationary solution $u_{\lambda}:=U_{\lambda}+w_{\lambda}$ to \eqref{eq:NS-intro} with the initial data $u_{\lambda}|_{t=0}=U_{\lambda} + w_{\lambda}|_{t=0}=u_0$.}

Finally, we shall prove the asymptotic behavior $u_{\lambda}(t)-U_{\lambda}=w_{\lambda}(t) \to 0$ strongly in $\dB_{p,q}^{n/p-1}(\R^n)$ as $t \to \infty$.
Let us fix a $T>0$ and rewrite the integral equation for $w_\lambda$ as 
\begin{align}
    w_{\lambda}(t)
    ={}&
    e^{(t-T)\Delta}w_{\lambda}(T)
    \\
    &
    -
    \int_T^t e^{(t-\tau)\Delta}\mathbb{P}\div\sp{U_{\lambda} \otimes w_{\lambda}(\tau) + w_{\lambda}(\tau) \otimes U_{\lambda} + w_{\lambda}(\tau) \otimes w_{\lambda}(\tau)}d\tau
\end{align}
for $t>T$.
Then, by Lemma \ref{lemm:nonlin-Duha}, we have for $T'>T$ that
\begin{align}
    \n{w_{\lambda}}_{L^{\infty}(T',\infty;\dB_{p,q}^{\frac{n}{p}-1})}
    \leq{}&
    \n{w_{\lambda}}_{\widetilde{L^{\infty}}(T',\infty;\dB_{p,q}^{\frac{n}{p}-1})}
    \\
    \leq{}&
    \n{e^{(T'-T)\Delta}w_{\lambda}(T)}_{\dB_{p,q}^{\frac{n}{p}-1}}
    \\
    &+
    C
    \sp{
    \n{U_{\lambda}}_{\dB_{p,q}^{\frac{n}{p}-1}}
    +
    \n{w_{\lambda}}_{\widetilde{L^{\infty}}(0,\infty;\dB_{p,q}^{\frac{n}{p}-1})}
    }
    \n{w_{\lambda}}_{\widetilde{L^{\theta}}(T,\infty;\dB_{p,q}^{\frac{n}{p}-1+\frac{2}{\theta}})}
    \\
    \leq{}&
    C
    \n{e^{(T'-T)\Delta}w_{\lambda}(T)}_{\dB_{p,q}^{\frac{n}{p}-1}}
    +
    C
    \n{w_{\lambda}}_{\widetilde{L^{\theta}}(T,\infty;\dB_{p,q}^{\frac{n}{p}-1+\frac{2}{\theta}})},
\end{align}
which implies
\begin{align}
    \limsup_{T' \to \infty}
    \n{w_{\lambda}}_{L^{\infty}(T',\infty;\dB_{p,q}^{\frac{n}{p}-1})}
    \leq 
    C
    \n{w_{\lambda}}_{\widetilde{L^{\theta}}(T,\infty;\dB_{p,q}^{\frac{n}{p}-1+\frac{2}{\theta}})}.
\end{align}
Hence, letting $T \to \infty$, we complete the proof.
\end{proof}

\noindent
{\bf Acknowledgments.} \\
The author was supported by JSPS KAKENHI, Grant Number JP25K17279. 

\noindent
{\bf Conflict of interest.} \\
The author has declared no conflicts of interest.

\noindent
{\bf Data availability.} \\
Data sharing not applicable to this article as no dataset was generated or analyzed during the current study.



\begin{thebibliography}{100}

\bib{Abidi-Paicu-2007}{article}{
   author={Abidi, Hammadi},
   author={Paicu, Marius},
   title={Existence globale pour un fluide inhomog\`ene},
   language={French, with English and French summaries},
   journal={Ann. Inst. Fourier (Grenoble)},
   volume={57},
   date={2007},
   pages={883--917},
}

\bib{Alb-Bru-Col-AnnMath}{article}{
   author={Albritton, Dallas},
   author={Bru\'e, Elia},
   author={Colombo, Maria},
   title={Non-uniqueness of Leray solutions of the forced Navier-Stokes
   equations},
   journal={Ann. of Math. (2)},
   volume={196},
   date={2022},
   pages={415--455},
}

\bib{Aok-Mae}{article}{
  author={Aoki, Motofumi},
  author={Maekawa, Yasunori},
  title={Self-similar vorticity around the boundary and non-uniqueness of solutions to the two-dimensional Navier-Stokes equations in the half space},
  note={Preprint},
  pages={arXiv:2507.02338},
  date={2025},
}


\bib{Bah-Che-Dan-11}{book}{
   author={Bahouri, Hajer},
   author={Chemin, Jean-Yves},
   author={Danchin, Rapha\"{e}l},
   title={Fourier analysis and nonlinear partial differential equations},
   series={Grundlehren der mathematischen Wissenschaften [Fundamental
   Principles of Mathematical Sciences]},
   volume={343},
   publisher={Springer, Heidelberg},
   date={2011},
   pages={xvi+523},
}

\bib{Benameu-2015-JMAA}{article}{
   author={Benameur, Jamel},
   title={Long time decay to the Lei-Lin solution of 3D Navier-Stokes
   equations},
   journal={J. Math. Anal. Appl.},
   volume={422},
   date={2015},
   pages={424--434},
}

\bib{Bou-Pav-08}{article}{
  author={Bourgain, Jean},
   author={Pavlovi\'c, Nata\v sa},
   title={Ill-posedness of the Navier-Stokes equations in a critical space
   in 3D},
   journal={J. Funct. Anal.},
   volume={255},
   date={2008},
   pages={2233--2247},
}

\bib{Buckmaster-Lellis-Isett-Szekelyhidi-2015-AnnMath}{article}{
   author={Buckmaster, Tristan},
   author={De Lellis, Camillo},
   author={Isett, Philip},
   author={Sz\'ekelyhidi, L\'aszl\'o, Jr.},
   title={Anomalous dissipation for $1/5$-H\"older Euler flows},
   journal={Ann. of Math. (2)},
   volume={182},
   date={2015},
   pages={127--172},
}

\bib{Buc-Vic-19}{article}{
  author={Buckmaster, Tristan},
   author={Vicol, Vlad},
   title={Nonuniqueness of weak solutions to the Navier-Stokes equation},
   journal={Ann. of Math. (2)},
   volume={189},
   date={2019},
   pages={101--144},
}

\bib{Can-Pla-96}{article}{
   author={Cannone, M.},
   author={Planchon, F.},
   title={Self-similar solutions for Navier-Stokes equations in ${\bf R}^3$},
   journal={Comm. Partial Differential Equations},
   volume={21},
   date={1996},
   pages={179--193},
}

\bib{Che-92}{article}{
   author={Chemin, J.-Y.},
   title={Remarques sur l'existence globale pour le syst\`eme de
   Navier-Stokes incompressible},
   language={French, with English summary},
   journal={SIAM J. Math. Anal.},
   volume={23},
   date={1992},
   pages={20--28},
}

\bib{Che-Ler-95}{article}{
   author={Chemin, J.-Y.},
   author={Lerner, N.},
   title={Flot de champs de vecteurs non lipschitziens et \'equations de
   Navier-Stokes},
   language={French},
   journal={J. Differential Equations},
   volume={121},
   date={1995},
   pages={314--328},
}

\bib{Che-93}{article}{
   author={Chen, Zhi Min},
   title={$L^n$ solutions of the stationary and nonstationary Navier-Stokes
   equations in ${\bf R}^n$},
   journal={Pacific J. Math.},
   volume={158},
   date={1993},
   pages={293--303},
}

\bib{Che-Hou-26}{article}{
  author={Cheskidov, Alexey},
  author={Hou, Hedong},
  title={On non-uniqueness of mild solutions and stationary singular solutions to the Navier--Stokes equations},
  note={Preprint},
  pages={arXiv:2603.03666},
  date={2026},
}

\bib{Che-Luo-21}{article}{
  author={Cheskidov, Alexey},
   author={Luo, Xiaoyutao},
   title={Sharp nonuniqueness for the Navier-Stokes equations},
   journal={Invent. Math.},
   volume={229},
   date={2022},
   pages={987--1054},
}

\bib{Che-Luo-22-AnnPDE}{article}{
   author={Cheskidov, Alexey},
   author={Luo, Xiaoyutao},
   title={$L^2$-critical nonuniqueness for the 2D Navier-Stokes equations},
   journal={Ann. PDE},
   volume={9},
   date={2023},
   pages={Paper No. 13, 56},
}

\bib{arXiv:2503.05692}{article}{
  author={Cheskidov, Alexey},
  author={Zeng, Zirong},
  author={Zhang, Deng},
  title={Global dissipative solutions of the 3D Navier--Stokes and MHD equations},
  note={Preprint},
  pages={arXiv:2503.05692},
  date={2025},
}

\bib{arXiv:2511.09556}{article}{
  author={Cheskidov, Alexey},
  author={Dai, Mimi},
  author={Palasek, Stan},
  title={Instantaneous Type I blow-up and non-uniqueness of Navier--Stokes flows},
  note={Preprint},
  pages={arXiv:2511.09556},
  date={2025},
}

\bib{Coi-Pal-25}{article}{
    author={Coiculescu, Matei P.},
    author={Palasek, Stan},
    title={Non-uniqueness of smooth solutions of the Navier--Stokes equations from critical data},
    journal={Invent. math.},
    date={2025},
}

\bib{Daneri-Szekelyhidi-2017-ARMA}{article}{
   author={Daneri, Sara},
   author={Sz\'ekelyhidi, L\'aszl\'o, Jr.},
   title={Non-uniqueness and h-principle for H\"older-continuous weak
   solutions of the Euler equations},
   journal={Arch. Ration. Mech. Anal.},
   volume={224},
   date={2017},
   pages={471--514},
}

\bib{DeLellis-Szekelyhidi-09}{article}{
   author={De Lellis, Camillo},
   author={Sz\'ekelyhidi, L\'aszl\'o, Jr.},
   title={The Euler equations as a differential inclusion},
   journal={Ann. of Math. (2)},
   volume={170},
   date={2009},
   pages={1417--1436},
}

\bib{Lellis-Szekelyhidi-2013-Invent}{article}{
   author={De Lellis, Camillo},
   author={Sz\'ekelyhidi, L\'aszl\'o, Jr.},
   title={Dissipative continuous Euler flows},
   journal={Invent. Math.},
   volume={193},
   date={2013},
   pages={377--407},
}

\bib{Escauriaza-Seregin-Sverak-2003}{article}{
   author={Eskauriaza, L.},
   author={Ser\"egin, G. A.},
   author={Shverak, V.},
   title={$L_{3,\infty}$-solutions of Navier-Stokes equations and backward
   uniqueness},
   language={Russian, with Russian summary},
   journal={Uspekhi Mat. Nauk},
   volume={58},
   date={2003},
   pages={3--44},
   translation={
      journal={Russian Math. Surveys},
      volume={58},
      date={2003},
      pages={211--250},
   },
}


\bib{Fuj-24}{article}{
   author={Fujii, Mikihiro},
   title={Ill-Posedness of the Two-Dimensional Stationary Navier--Stokes
   Equations on the Whole Plane},
   journal={Ann. PDE},
   volume={10},
   date={2024},
   pages={Paper No. 10},
}


\bib{Fuj-AIPHC}{article}{
  author={Fujii, Mikihiro},
  title={Time-periodic solutions to the Navier--Stokes equations on the whole space including the two-dimensional case},
  journal={Ann. Inst. H. Poincaré C Anal. Non Linéaire},
  date={2025},
  note={published online first},
}

\bib{arXiv:2509.21272v2}{article}{
  author={Fujii, Mikihiro},
  author={Tsurumi, Hiroyuki},
  title={Asymptotic instability for the forced {Navier--Stokes} equations in critical {Besov} spaces},
  note={Preprint},
  pages={arXiv:2509.21272v2},
  date={2025},
}

\bib{Fuj-Kato-64-ARMA}{article}{
   author={Fujita, Hiroshi},
   author={Kato, Tosio},
   title={On the Navier-Stokes initial value problem. I},
   journal={Arch. Rational Mech. Anal.},
   volume={16},
   date={1964},
   pages={269--315},
}

\bib{Fur-Rie-Ter-2000}{article}{
   author={Furioli, Giulia},
   author={Lemari\'e-Rieusset, Pierre G.},
   author={Terraneo, Elide},
   title={Unicit\'e{} dans $L^3(\R^3)$ et d'autres espaces fonctionnels
   limites pour Navier-Stokes},
   language={French, with English and French summaries},
   journal={Rev. Mat. Iberoamericana},
   volume={16},
   date={2000},
   pages={605--667},
}

\bib{Gallagher-Iftimie-Planchon-2002}{article}{
   author={Gallagher, Isabelle},
   author={Iftimie, Drago\c s},
   author={Planchon, Fabrice},
   title={Asymptotics and stability for global solutions to the
   Navier-Stokes equations},
   language={English, with English and French summaries},
   conference={
      title={Journ\'ees ``\'Equations aux D\'eriv\'ees Partielles''},
      address={Forges-les-Eaux},
      date={2002},
   },
   book={
      publisher={Univ. Nantes, Nantes},
   },
   isbn={2-86939-188-9},
   date={2002},
   pages={Exp. No. VI, 9},
}

\bib{Gallagher-Iftimie-Planchon-2003}{article}{
   author={Gallagher, I.},
   author={Iftimie, D.},
   author={Planchon, F.},
   title={Asymptotics and stability for global solutions to the
   Navier-Stokes equations},
   language={English, with English and French summaries},
   journal={Ann. Inst. Fourier (Grenoble)},
   volume={53},
   date={2003},
   pages={1387--1424},
}

\bib{Gig-Miy-85}{article}{
   author={Giga, Yoshikazu},
   author={Miyakawa, Tetsuro},
   title={Solutions in $L_r$ of the Navier-Stokes initial value problem},
   journal={Arch. Rational Mech. Anal.},
   volume={89},
   date={1985},
   pages={267--281},
}

\bib{2509.25116v1}{article}{
  author={Hou, Thomas},
  author={Wang, Yixuan},
  author={Yang, Changhe},
  title={Nonuniqueness of Leray--Hopf solutions to the unforced incompressible 3D Navier--Stokes equation},
  note={Preprint},
  pages={arXiv:2509.25116v1},
  date={2025},
}

\bib{Isett-18}{article}{
   author={Isett, Philip},
   title={A proof of Onsager's conjecture},
   journal={Ann. of Math. (2)},
   volume={188},
   date={2018},
   pages={871--963},
}

\bib{IN}{article}{
	   author={Iwabuchi, Tsukasa},
	   author={Nakamura, Makoto},
	   title={Small solutions for nonlinear heat equations, the Navier-Stokes
	   equation, and the Keller-Segel system in Besov and Triebel-Lizorkin
	   spaces},
	   journal={Adv. Differential Equations},
	   volume={18},
	   date={2013},
	   pages={687--736},
	}

\bib{Iwabuchi-Okazaki-2025}{article}{
  author={Iwabuchi, Tsukasa},
  author={Okazaki, Taiki},
  title={On the uniqueness of the mild solution of the critical quasi-geostrophic equation},
  journal={Commun. Math. Sci.},
  volume={23},
  date={2025},
}

\bib{Jia-Sve-Invent}{article}{
   author={Jia, Hao},
   author={\v Sver\'ak, Vladim\'ir},
   title={Local-in-space estimates near initial time for weak solutions of
   the Navier-Stokes equations and forward self-similar solutions},
   journal={Invent. Math.},
   volume={196},
   date={2014},
   pages={233--265},
}

\bib{Jia-Sve-JFA}{article}{
   author={Jia, Hao},
   author={Sverak, Vladimir},
   title={Are the incompressible 3d Navier-Stokes equations locally
   ill-posed in the natural energy space?},
   journal={J. Funct. Anal.},
   volume={268},
   date={2015},
   pages={3734--3766},
}


\bib{Kan-Koz-Shi-19}{article}{
   author={Kaneko, Kenta},
   author={Kozono, Hideo},
   author={Shimizu, Senjo},
   title={Stationary solution to the Navier-Stokes equations in the scaling
   invariant Besov space and its regularity},
   journal={Indiana Univ. Math. J.},
   volume={68},
   date={2019},
   pages={857--880},
}

\bib{Kato-84}{article}{
   author={Kato, Tosio},
   title={Strong $L^p$-solutions of the Navier-Stokes equation in ${\bf
   R}^{m}$, with applications to weak solutions},
   journal={Math. Z.},
   volume={187},
   date={1984},
   pages={471--480},
}

\bib{Koh-Tat-01}{article}{
   author={Koch, Herbert},
   author={Tataru, Daniel},
   title={Well-posedness for the Navier-Stokes equations},
   journal={Adv. Math.},
   volume={157},
   date={2001},
   pages={22--35},
}

\bib{Kozono-Shimizu-2018}{article}{
   author={Kozono, Hideo},
   author={Shimizu, Senjo},
   title={Navier-Stokes equations with external forces in time-weighted
   Besov spaces},
   journal={Math. Nachr.},
   volume={291},
   date={2018},
   pages={1781--1800},
}

\bib{Kozono-Sohr-1996}{article}{
  author={Kozono, Hideo},
  author={Sohr, Hermann},
  title={Remark on uniqueness of weak solutions to the {Navier--Stokes} equations},
  journal={Analysis},
  volume={16},
  date={1996},
  pages={255--272},
}

\bib{Koz-Yam-94}{article}{
   author={Kozono, Hideo},
   author={Yamazaki, Masao},
   title={Semilinear heat equations and the Navier-Stokes equation with
   distributions in new function spaces as initial data},
   journal={Comm. Partial Differential Equations},
   volume={19},
   date={1994},
   pages={959--1014},
}

\bib{Rieusset-25-JFA}{article}{
   author={Lemari\'e-Rieusset, Pierre Gilles},
   title={Highly singular (frequentially sparse) steady solutions for the 2D
   Navier-Stokes equations on the torus},
   journal={J. Funct. Anal.},
   volume={288},
   date={2025},
   pages={Paper No. 110761, 16},
}


\bib{Leray-1934-Acta}{article}{
   author={Leray, Jean},
   title={Sur le mouvement d'un liquide visqueux emplissant l'espace},
   language={French},
   journal={Acta Math.},
   volume={63},
   date={1934},
   pages={193--248},
}

\bib{Li-Yu-Zhu-25}{article}{
  author={Li, Jinlu},
  author={Yu, Yanghai},
  author={Zhu, Weipeng},
  title={Ill-posedness for the stationary {Navier--Stokes} equations in critical {Besov} spaces},
  journal={SIAM J. Math. Anal.},
  volume={57},
  date={2025},
  pages={2363--2383},
}

\bib{Lio-Mas}{article}{
   author={Lions, P.-L.},
   author={Masmoudi, N.},
   title={Uniqueness of mild solutions of the Navier-Stokes system in $L^N$},
   journal={Comm. Partial Differential Equations},
   volume={26},
   date={2001},
   pages={2211--2226},
}

\bib{Luo-ARMA-2019}{article}{
   author={Luo, Xiaoyutao},
   title={Stationary solutions and nonuniqueness of weak solutions for the
   Navier-Stokes equations in high dimensions},
   journal={Arch. Ration. Mech. Anal.},
   volume={233},
   date={2019},
   pages={701--747},
}

\bib{Masuda-1984}{article}{
   author={Masuda, Ky\=uya},
   title={Weak solutions of Navier-Stokes equations},
   journal={Tohoku Math. J. (2)},
   volume={36},
   date={1984},
   pages={623--646},
}

\bib{Miu-2005}{article}{
   author={Miura, Hideyuki},
   title={Remark on uniqueness of mild solutions to the Navier-Stokes
   equations},
   journal={J. Funct. Anal.},
   volume={218},
   date={2005},
   pages={110--129},
}

\bib{Nakasato-25-JDE}{article}{
   author={Nakasato, Ryosuke},
   title={Analyticity and asymptotic behavior of solutions to the
   Navier-Stokes equations in an end-point scaling critical framework},
   journal={J. Differential Equations},
   volume={453},
   date={2026},
   pages={Paper No. 113851, 29},
}

\bib{Nash-54}{article}{
   author={Nash, J.},
   title={$C^1$ isometric imbeddings},
   journal={Ann. of Math. (2)},
   volume={60},
   date={1954},
   pages={383--396},
}

\bib{Pal-25-IMRN}{article}{
   author={Palasek, Stan},
   title={Non-uniqueness in the Leray-Hopf class for a dyadic Navier-Stokes
   model},
   journal={Int. Math. Res. Not. IMRN},
   date={2025},
   pages={Paper No. rnaf344, 32},
}

\bib{Pla-98}{article}{
   author={Planchon, Fabrice},
   title={Asymptotic behavior of global solutions to the Navier-Stokes
   equations in ${\bf R}^3$},
   journal={Rev. Mat. Iberoamericana},
   volume={14},
   date={1998},
   pages={71--93},
}


\bib{Saw-18}{book}{
   author={Sawano, Yoshihiro},
   title={Theory of Besov spaces},
   series={Developments in Mathematics},
   volume={56},
   publisher={Springer, Singapore},
   date={2018},
   pages={xxiii+945},
}

\bib{Serrin-1963}{article}{
   author={Serrin, James},
   title={The initial value problem for the Navier-Stokes equations},
   conference={
      title={Nonlinear Problems},
      address={Proc. Sympos., Madison, Wis.},
      date={1962},
   },
   book={
      publisher={Univ. Wisconsin Press, Madison, WI},
   },
   date={1963},
   pages={69--98},
}

\bib{Takeuchi-2025}{article}{
  author={Takeuchi, Taiki},
  title={Decay rates of small global solutions to the Navier–Stokes system in scaling invariant homogeneous Besov spaces},
   journal={Math. Ann.},
   date={2025},
}

\bib{Takeuchi-2025-SIMA}{article}{
   author={Takeuchi, Taiki},
   title={On the strong solutions to the Navier-Stokes system with
   excessively singular external forces},
   journal={SIAM J. Math. Anal.},
   volume={57},
   date={2025},
   pages={5382--5419},
}


\bib{Tan-Tsu-Zha-25}{article}{
   author={Tan, Jin},
   author={Tsurumi, Hiroyuki},
   author={Zhang, Xin},
   title={On steady solutions of the Hall-MHD system in Besov spaces},
   journal={Z. Angew. Math. Phys.},
   volume={76},
   date={2025},
   pages={Paper No. 139, 21},
}

\bib{Tsu-19-ARMA}{article}{
   author={Tsurumi, Hiroyuki},
   title={Well-posedness and ill-posedness problems of the stationary
   Navier-Stokes equations in scaling invariant Besov spaces},
   journal={Arch. Ration. Mech. Anal.},
   volume={234},
   date={2019},
   pages={911--923},
}

\bib{Wan-15}{article}{
   author={Wang, Baoxiang},
   title={Ill-posedness for the Navier-Stokes equations in critical Besov
   spaces $\dot B_{\infty,q}^{-1}$},
   journal={Adv. Math.},
   volume={268},
   date={2015},
   pages={350--372},
}

\bib{Watanabe-2025}{article}{
  author={Watanabe, Keiichi},
  title={Algebraic decay rates for strong solutions to the Navier--Stokes equations in critical spaces},
  note={Preprint},
  date={2025},
}

\bib{Yam-2000}{article}{
   author={Yamazaki, Masao},
   title={The Navier-Stokes equations in the weak-$L^n$ space with
   time-dependent external force},
   journal={Math. Ann.},
   volume={317},
   date={2000},
   pages={635--675},
}

\bib{Yon-10}{article}{
   author={Yoneda, Tsuyoshi},
   title={Ill-posedness of the 3D-Navier-Stokes equations in a generalized
   Besov space near $\rm BMO^{-1}$},
   journal={J. Funct. Anal.},
   volume={258},
   date={2010},
   pages={3376--3387},
}

\bib{arXiv:2402.01174v2}{article}{
  author={Zhan, Zhirun},
  title={Uniqueness of mild solutions to the Navier--Stokes equations in weak-type $L^d$ space},
  note={Preprint},
  pages={arXiv:2402.01174v2},
  date={2024},
}


\end{thebibliography}
\end{document}